\newtheorem{thm}{Theorem}[section]
\newtheorem{thmx}{Theorem}
\newenvironment{thmp}{\stepcounter{thm}\begin{thmx} }{\end{thmx}}
\newtheorem{cor}[thm]{Corollary}
\newtheorem{lem}[thm]{Lemma}
\newtheorem{prop}[thm]{Proposition}
\newtheorem{teo}[thm]{Theorem}
\theoremstyle{definition}
\newtheorem{defi}[thm]{Definition}
\newtheorem{rem}[thm]{Remark}
\DeclareMathOperator{\Iso}{Iso}
\DeclareMathOperator{\Geo}{Geo}
\DeclareMathOperator{\OptGeo}{OptGeo}
\DeclareMathOperator{\Lip}{Lip}
\DeclareMathOperator{\LIP}{LIP}
\DeclareMathOperator{\Ch}{Ch}
\DeclareMathOperator{\Ker}{Ker}
\DeclareMathOperator{\Ent}{Ent}
\DeclareMathOperator{\Diam}{Diam}
\DeclareMathOperator{\Fix}{Fix}
\DeclareMathOperator{\Tang}{Tan}
\begin{document}
\title{Invariant measures and lower Ricci curvature bounds}
\author{Jaime Santos-Rodr\'iguez}
\address{Department of Mathematics, Universidad Aut\'onoma de Madrid and ICMAT CSIC-UAM-UCM-UC3M, Spain}
\email{jaime.santosr@estudiante.uam.es}
\thanks{The author was supported by research grants MTM2014-57769-3-P, and MTM2017-85934-C3-2-P (MINECO) and ICMAT Severo Ochoa Project SEV-2015-0554 (MINECO). }
\date{\today}


\subjclass[2010]{53C23, 53C21}
\keywords{Ricci curvature dimension, metric measure space,  isometry group}

\begin{abstract}
Given a metric measure space  $(X,d,\mathfrak{m})$ that satisfies the Riemannian Curvature Dimension condition,   $RCD^*(K,N),$  and a compact subgroup of isometries $G \leq \Iso(X)$ we prove that there exists a $G-$invariant measure, $\mathfrak{m}_G,$ equivalent to $\mathfrak{m}$ such that $(X,d,\mathfrak{m}_G)$ is still  a  $RCD^*(K,N)$ space.  We also obtain some applications to Lie group actions on $RCD^*(K,N)$ spaces. We look at homogeneous spaces, symmetric spaces and obtain dimensional gaps for closed subgroups of isometries.
\end{abstract}

\maketitle

\section{Introduction and statement of results}

Synthetic Ricci lower curvature bounds were introduced in the seminal papers of Lott-Villani \cite{LotVi} and Sturm \cite{SturmI}, \cite{SturmII}. This synthetic definition, known as Curvature-Dimension condition or $CD(K,N)$ is, broadly speaking, the convexity of certain functionals (called entropies) along geodesics on the space of probability measures.

In \cite{EKS} Erbar, Kuwada and Sturm defined another notion of Curvature-Dimension called Entropic Curvature Dimension condition, $CD^e(K,N).$ Under some technical assumptions  there is  equivalence of different notions of synthetic Ricci curvature bounds. For example we have that $CD^e (K,N)$ is equivalent to $RCD^*(K,N)$. (see Theorem 7 in \cite{EKS}). 

One thing to notice is that unlike the case of synthetic sectional curvature  a metric structure alone is not sufficient to talk about Ricci curvature. A reference measure is also required, that is we will work with metric measure spaces $(X,d,\mathfrak{m}).$  It is of interest then to see  how the measure $\mathfrak{m}$ and the metric $d$ interact with one another. 

In \cite{GuijSan} Guijarro and the author, and independently Sosa in \cite{Sosa}, studied the isometry group of $RCD^*(K,N)$ spaces. In both of these papers it is proved that the set of fixed points of an isometry must have zero $\mathfrak{m}-$measure. The main result of this article makes the connection between the metric and the measure deeper. More precisely we have:

\newtheorem*{thm:A}{Theorem \ref{teo:A}}
\begin{thm:A}
Let $(X,d,\mathfrak{m})$ be an $RCD^*(K,N)$ space and, $G\leq \Iso(X)$ a compact subgroup. Then there exists a $G-$invariant measure $\mathfrak{m}_G,$ equivalent to $\mathfrak{m},$ and such that $(X,d,\mathfrak{m}_G)$ is an $RCD^*(K,N)$ space.
 \end{thm:A}

 The strategy for finding the desired measure is the following: In \cite{Kell} Kell studied properties of optimal couplings with first marginal absolutely continuous with respect to $\mathfrak{m}.$ He obtained a measure rigidity result which states that any two essentially non-branching, qualitatively non-degenerate measures must be mutually absolutely continuous. Reference measures of $RCD^*(K,N)$ spaces in particular satisfy these conditions.  So then we first prove that the  reference measure $\mathfrak{m}$ is quasi-invariant with respect to $\Iso (X).$  That is, we prove that for every  isometry $g$ the measures $g_{\#}\mathfrak{m}$ and $\mathfrak{m}$ are mutually absolutely continuous. 
 
  Then we use the densities corresponding to the measures $g_{\#}\mathfrak{m}$ to construct an appropriate density function $\Psi_G$ such that the new measure $\mathfrak{m}_G := \Psi_G \mathfrak{m} $ is both invariant and  preserves the curvature bounds satisfied by the original measure $\mathfrak{m}.$

Studying Lie group actions has proved useful in Riemannian and Alexandrov geometry for finding about how the geometry and topology of a space affect one another. A couple of good sources to read on this are Grove's paper \cite{Grove}  or the more recent book by Dearricot et. al. \cite{Dear}.

 In \cite{GuijSan}, and indenpendently in \cite{Sosa}, it was shown that the isometry group of an $RCD^*(K,N)$ space is a Lie group. Therefore it makes sense now to study properties of Lie group actions on m.m.s.

In Lemma \ref{prop.preservstrat} we prove that the strafitication of a m.m.s. into $k-$regular sets ,  denoted as $\mathcal{R}_k,$ given by Mondino and Naber must be preserved by isometries (see Theorem \ref{teo.rect} for the stratification). We also look at the interaction of the orbits with the measure $\mathfrak{m}.$ Proposition \ref{prop.orbposmeas} states that the orbits can not have positive measure unless the action is transitive on the whole space. Furthermore, in Proposition \ref{prop.homogeneous} we prove that homogeneous spaces must in fact be Riemannian manifolds.   

In \cite{GalKellMonSosa} some extra assumptions were made in order to obtain information about the behaviour of  the $k-$regular sets  in the quotient space $X/G.$ (see Theorem $6.3$ \cite{GalKellMonSosa}).  We will also make one which we will refer to as the  \emph{isotropy condition } $\mathcal{I}.$ Briefly, this condition asks that for an appropriate $k-$regular point $p,$ the   orbits $G_p \cdot q$ of the isotropy group $G_p$  are rectifiable. (see Section \ref{Boundgroups} for a precise definition).  

In \cite{Kit}  Kitabeppu defined the Analytic dimension, $\underline{\dim} X$ , of an $RCD^*(K,N)$ space as the largest $k \in [1,N]\cap \mathbb{N}$ such that $\mathfrak{m}(\mathcal{R}_k)>0.$ For both Riemannian manifolds and Alexandrov spaces it is easy to see that the Analytic dimension coincides with the more classical notions used in the literature. However, in a more general setting  there does not seem to be any  relationship between the Hausdorff or the topological dimension and this new dimension.

Thanks to theorem \ref{teo:A} we can change the measure without losing the curvature bounds and extend some of the theorems of Galaz-Garc\'ia and Guijarro \cite{GalGui} for Alexandrov spaces. More specifically we prove:

\newtheorem*{thm:B}{Theorem \ref{teo:B}}
\begin{thm:B}
Let $(X,d,\mathfrak{m})$ be an $RCD^*(K,N)$ space,  such that $\underline{\dim } X =k \neq 4.$ Then there are no closed subgroups  $G \leq \Iso(X),$ satisfying the isotropy condition $\mathcal{I}$, and whose dimension lies in the interval:

$$\frac{1}{2}\kappa (\kappa-1)+1    <   \dim G <\frac{1}{2}\kappa(\kappa+1)  ,$$ where  $\kappa = \max\lbrace \dim_{T}\mathcal{R}_k, \underline{\dim}X\rbrace. $
\end{thm:B}

\newtheorem*{thm:C}{Theorem \ref{teo:C}}
\begin{thm:C}
Let $(X,d,\mathfrak{m})$ be an $RCD^*(K,N)$ space,  such that $\underline{\dim}X =k \neq 4.$ If  $\Iso(X)$ has dimension $\frac{1}{2}\kappa(\kappa-1)+1,$ where  $\kappa = \max\lbrace \dim_{T}\mathcal{R}_k, \underline{\dim}X\rbrace, $    and satisfies the isotropy condition $\mathcal{I}$ then $(X,d)$ is isometric to a Riemannian manifold.
\end{thm:C}

In the $4-$dimensional case, we prove separately  that if the isometry group has dimension $7$  or $8$ then the action must be transitive. Then by Ishihara's results \cite{Ishi} we obtain analogues to theorems \ref{teo:B} and \ref{teo:C}. We also prove the more general gap theorem of Mann \cite{Mann}. 

Finally we look at Symmetric spaces and prove, just as in the case of Alexandrov spaces, that $RCD^*(K,N)$ spaces that are symmetric must be Riemannian manifolds.

\subsection*{Acknowledgements}
The author would like to thank his advisor Prof. Luis Guijarro for helpful comments on earlier versions of this manuscript.

\section{Preliminaries}

This section will be devoted to introducing the notation and structural results that we will use throughout the paper.  A metric measure space $(X,d,\mathfrak{m})$ will consist of a  complete, separable and geodesic metric space $(X,d)$ and a Radon measure $\mathfrak{m}$ on the Borel $\sigma-$algebra $\mathcal{B}(X).$ Let $\mathbb{P}(X)$ denote the space of probability measures on $\mathcal{B}(X)$ and 
$$\mathbb{P}_2(X) := \left\lbrace \mu \in \mathbb{P}(X) \,|\,  \int_X d^2(x_0,x)d\mu < \infty \text{ for some (and hence all) } x_0 \in X   \right\rbrace $$
 the space of probability measures with finite second moments. 

\subsection{Optimal transport}

Given measures $\mu, \nu \in \mathbb{P}_2(X)$ a coupling between them  is a probability measure $\pi \in \mathbb{P}(X\times X)$ such that
$$p_{1\#}\pi = \mu,\quad  p_{2\#}= \nu, $$ where $p_i : X\times X \rightarrow X,$ $i= 1,2$
 are the projections onto the first and second factors respectively.  Using these we define the $L^2-$Wassertein distance, $\mathbb{W}_2,$ as:
 $$\mathbb{W}_2^2 (\mu,\nu) := \inf\left\lbrace \int_{X\times X} d^2(x,y) d\pi(x,y) \,|\, \pi \text{ is a coupling between } \mu \text{ and } \nu \right\rbrace. $$ Since $d^2$ is lower semicontinuous it follows that  the infimum can be replaced by a minimum.  It is known that  $(\mathbb{P}_2(X),\mathbb{W}_2)$ inherits good properties from $(X,d)$ such as being complete, separable and geodesic.

We will denote by $\Geo(X)$ the space of geodesics of $X$ and equip it with the topology of uniform convergence. For $t \in [0,1]$ we define the evaluation map $e_t : \Geo(X) \rightarrow X,$ $\gamma \mapsto \gamma_t.$ If we take a geodesic $\left(\mu_t \right)_{t \in [0,1]}$ in $\mathbb{P}_2(X),$ then there exists $\pi \in \mathbb{P}(\Geo(X))$ with $e_{t\#}\pi = \mu_t$ for all $t \in [0,1]$ and 
\begin{align*}
\mathbb{W}_2^2 (\mu_s,\mu_t) &= \int_{\Geo(X)}d^2(\gamma_s,\gamma_t)d\pi(\gamma)\\
&=(s-t)^2\int_{\Geo(X)}\text{length}^2(\gamma)d\pi(\gamma) \quad \text{ for all } s,t \in [0,1].
\end{align*}
(See  Theorem $2.10 $ \cite{AmbGig} for details). The collection of all such measures $\pi$ will be denoted as $\OptGeo(\mu_0,\mu_1).$

A set $\Gamma \subset \Geo(X)$ is non-branching if given $\gamma, \gamma' \in \Gamma$ such that  $\gamma_t = \gamma'_t$ for some $t \in [0,1]$ implies that $\gamma = \gamma'.$

\begin{defi}
We will say that a m.m.s.  $(X,d,\mathfrak{m})$ is \emph{essentially non-branching} if given  $\mu_0, \mu_1 \in \mathbb{P}_2 (X)$ such that $\mu_0 \ll \mathfrak{m},$  there exists a unique optimal transport 
$\pi \in \mathbb{P}(\Geo(X))$ concentrated on a non-branching set of geodesics $\Gamma$ and such that  $\mu_t \ll \mathfrak{m}$ for all $t \in [0,1).$   
\end{defi}

\subsection{Relative entropy}\label{sec.RelEntropy}

Let $(X,d,\mathfrak{m})$ be a m.m.s. For a measure $\mu \in \mathbb{P}_2(X)$ we define its relative entropy with respect to $\mathfrak{m}$ as:
$$\Ent_{\mathfrak{m}}(\mu) := \int \rho \log \rho \,d\mathfrak{m}, $$ whenever $\mu = \rho \mathfrak{m}$ and $(\rho \log \rho)_{+}$ is integrable, otherwise we  set it as $+\infty.$ The subset of measures with finite entropy will be denoted as $D(\Ent_\mathfrak{m}).$ In the case that the reference measure $\mathfrak{m}$ is a probability measure,  Jensen's inequality ensures that  the entropy is nonnegative. For measures of infinite mass but satisfying the growth condition
\begin{equation}\label{eq.growthcond}
\int \exp(-cd^2(x_0,x))d\mathfrak{m} < \infty,
\end{equation}
for some $c>0,$ and $x_0\in X,$ it is known (see for example Section $2$ of  \cite{AmbGigMonRaj} and references therein) that  $\Ent_{\mathfrak{m}}(\mu) > -\infty$ for all $\mu \in \mathbb{P}_2(X).$  

\subsection{Curvature-Dimension conditions}
In \cite{EKS} Erbar, Kuwada and Sturm studied convexity properties of the relative entropy of a m.m.s. $(X,d,\mathfrak{m}).$  They defined a new curvature dimension condition, called entropic curvature dimension condition, which allowed them to prove the equivalence between several previously defined notions of curvature bounds such as the ones given by Lott-Sturm-Villani and by Bakry-\'Emery. For details see Theorem $7$ in \cite{EKS}.

Given $K \in \mathbb{R},$ and $N \in [1, \infty)$   we define the distortion coefficients  $\sigma_{K,N}^{(t)}(\theta)$ for $t \in [0,1]:$  
\begin{equation*}
\sigma_{K,N}^{(t)}(\theta):=
\begin{cases}
\infty & \text{if }\, K\theta^2\geq N\pi^2, \\
\frac{\sin(t\theta \sqrt{K/N})}{\sin(\theta\sqrt{K/N})} & \text{if }\, 0<K\theta^2<N\pi^2,\\
t & \text{if }\, K\theta^2=0,\\
\frac{\sinh(t\theta\sqrt{-K/N})}{\sinh(\theta\sqrt{-K/N})} & \text{if } K\theta^2<0.
\end{cases}
\end{equation*}

Let $S: (X,d)\rightarrow [-\infty, \infty]$ be a functional on $X,$ and denote its proper domain by $D(S):= \lbrace x\in X |\, S(x)<\infty \rbrace.$  For $N \in (0,\infty)$ we  define the functional  $\mathcal{U}_N: (X,d) \rightarrow [0,\infty],$
\begin{equation}\label{eq.deffunctional}
\mathcal{U}_N(x) := \exp\left(-\frac{1}{N}S(x) \right).
\end{equation}

\begin{defi}
Let $S: (X,d)\rightarrow [-\infty, \infty]$ be a functional on $X,$ and  $K \in \mathbb{R},$ $N \in (0,\infty).$ We will say that the functional $S$ is $(K,N)-$convex if and only if for each $x_0, x_1 \in X$ there exists a geodesic $\gamma$ between them such that for all $t \in [0,1]: $
\begin{equation}\label{eq.KNconvex}
\mathcal{U}_N(\gamma_t )\geq \sigma_{K/N}^{(1-t)}(d(\gamma_0,\gamma_1))\,\mathcal{U}_N(\gamma_0)+\sigma_{K/N}^{(t)}(d(\gamma_0,\gamma_1))\,\mathcal{U}_N(\gamma_1). 
\end{equation}
\end{defi}
If this condition holds for any pair of points $x_0, x_1 \in D(S)$ and any geodesic $\gamma$ connecting $x_0 $ to $x_1$ then we will say that $S$ is strongly $(K,N)-$convex. Observe that if  $\gamma_0, \gamma_1 \in D(S) $ then any geodesic $\gamma $ that satisfies inequality (\ref{eq.KNconvex}) lies in $D(S).$

Examples of $(K,N)-$convex functionals can be found in examples $2.5$ and $2.6$ in \cite{EKS}.

\begin{defi}\label{def.entropiccurv}
Let $(X,d,\mathfrak{m})$ be a m.m.s., we will say that it satisfies the Entropic Curvature Dimension condition, $CD^e(K,N)$ for some $K \in \mathbb{R}$ and $N \in (0, \infty)$ if  and only if given any two measures $\mu_0, \mu_1 \in D(\Ent_{\mathfrak{m}})$ there exists a geodesic $\left(\mu_t \right)_{t \in [0,1]}$ in  $D(\Ent_{\mathfrak{m}})$    such that for all  $t \in [0,1]:$
\begin{equation}\label{eq.entropyKNconvex}
\mathcal{U}_N(\mu_t) \geq \sigma_{K/N}^{(1-t)}(\mathbb{W}_2(\mu_0,\mu_1))\,\mathcal{U}_N(\mu_0)+\sigma_{K/N}^{(t)}(\mathbb{W}_2(\mu_0,\mu_1))\,\mathcal{U}_N(\mu_1).
\end{equation}
\end{defi}
If this holds for any geodesic $\left(\mu_t \right)_{t \in [0,1]}$ in $D(\Ent_{\mathfrak{m}})$ we will say that the m.m.s is a strong $CD^e(K,N)$ space.  

So then,  broadly speaking $(X,d,\mathfrak{m})$ is a $CD^e(K,N)$ space if the relative entropy $\Ent_\mathfrak{m}$ is $(K,N)-$convex. 

\subsection{Sobolev spaces and Cheeger energy}
Despite the generality in which we will be working, there is some first order differential structure available for metric measure spaces. Here we only present the basic notions that we will use; for a more detailed exposition one can look at \cite{GigliII}.

Let  $\LIP(X)$ be the space of real valued Lipschitz functions on $X.$ Given a Lipschitz  function $f$ denote its  finiteness domain by $D(f) = \lbrace x \in X | f(x)\in \mathbb{R} \rbrace; $  the local Lipschitz constant is defined as:

\begin{equation}\label{def.Lipconst}
\Lip f(x) := \limsup_{y\rightarrow x} \frac{|f(x)-f(y) |}{d(x,y)},
\end{equation}
by convention we take $\Lip f(x) = \infty$ if $x \notin D(f),$ and $\Lip f(x)=0$ if $x$ is isolated.

\begin{defi}\label{def.Cheegerenergy}
Let  $(X,d,\mathfrak{m})$ be a m.m.s.  For   $f \in L^2(\mathfrak{m})$ we define the Cheeger energy as:

\begin{equation}
 \Ch_{\mathfrak{m}} (f) :=\inf \left\lbrace \liminf_{n \rightarrow \infty} \frac{1}{2}\int |\Lip f_n|^2 d \mathfrak{m} \,|\, f_n \in \LIP(X),  f_n \rightarrow f  \text{ in } L^2 (\mathfrak{m})   \right\rbrace.
\end{equation}
We denote its domain as $D(\Ch_{\mathfrak{m}}) := \lbrace f \in L^2(\mathfrak{m}) \,|\,  \Ch_{\mathfrak{m}} (f) < \infty \rbrace $ 
\end{defi}
It can be proved that for $f \in D(\Ch_{\mathfrak{m}})$ if one looks at the optimal approximation in the definition then there exists a function in $L^2(\mathfrak{m}),$ called the minimal weak upper gradient,  $|\nabla f|_w$ such that
\begin{equation*}
\Ch_{\mathfrak{m}}(f) = \frac{1}{2}\int |\nabla f|_w^{2} d\mathfrak{m}.
\end{equation*}

The minimal weak upper gradient is of local nature in the sense that for all  $f,g \in W^{1,2}(\mathfrak{m})$ 
\begin{equation}\label{eq.locality}
|\nabla f|_w = |\nabla g|_w  \quad \mathfrak{m}- \text{a.e. on } \lbrace f=g\rbrace.  
\end{equation}
We also have that  minimal weak upper gradients satisfy the following Leibniz rule:
\begin{equation}
|\nabla fg|_w \leq |f||\nabla g|_w+|g||\nabla f|_w  \quad  \text{ for all } f,g \in W^{1,2}(\mathfrak{m})\cap L^{\infty}(\mathfrak{m}).
\end{equation}

If $(X,d,\mathfrak{m})$   satisfies a weak Poincar\'e inequality and the reference measure $\mathfrak{m}$ is locally doubling, then by Theorem $6.1$  in \cite{Cheeg} we have  that for any locally Lipschitz function $f$
\begin{equation*}
\Lip f (x) = |\nabla f|_w (x)   \quad \mathfrak{m}-\text{a.e. }  x \in X.
\end{equation*}

Let  $W^{1,2}(\mathfrak{m}):= D(\Ch_{\mathfrak{m}}) $ and equip it with the norm  $$\|f\|_{W^{1,2}}:= \|f\|_{L^2(\mathfrak{m})} + \int |\nabla f|_w^2d\mathfrak{m}.$$ In the case that this space is actually a Hilbert space we will say that $(X,d,\mathfrak{m})$ is infinitesimally Hilbertian.

In \cite{GigliII} Gigli studied  the differential structure  of m.m.s. in particular, under the assumption that $W^{1,2}(\mathfrak{m})$ is a Hilbert space then it is possible to obtain a pointwise version of the parallelogram law  ( see Corollary $3.4$ in that paper).
\begin{equation}\label{eq.parallelpoint}
|\nabla(f+g) |_w^2+|\nabla(f-g)|_w^{2} = 2(|\nabla f|_w^2 +|\nabla g|_w^2)  \quad \mathfrak{m}-\text{a.e. }  \text{ for all } f,g \in W^{1,2}(\mathfrak{m}).
\end{equation}

\subsection{Gromov-Hausdorff convergence}
Now we recall the different notions of Gromov-Hausdorff convergence that will be used in Section 
\ref{sec.applications}.

\begin{defi}
Let $(X,d_X, p_X),$ $(Y,d_Y, p_Y)$ be complete and separable  metric spaces. For $\epsilon >0 $ we will call a function  $f_\epsilon : B_{p_X}(1/\epsilon, X)\rightarrow B_{p_Y}(1/\epsilon, Y) $ a pointed $\epsilon$-Gromov-Hausdorff approximation if:
\begin{enumerate}
\item $f_\epsilon(p_X)= p_Y,$

\item for all $u,v \in B_{p_X}(1/\epsilon, X)$, $|\,d_X(u,v)-d_Y(f_\epsilon(u),f_\epsilon(v))\,|< \epsilon,$ and

\item for all $y \in B_{p_Y}(1/\epsilon, Y)$, there exists a point $x \in B_{p_X}(1/\epsilon, X)$ such that $d_Y(f_\epsilon(x),y) < \epsilon$.
\end{enumerate}
 Note that we do not assume that $f_\epsilon$ is continuous.
\end{defi}

\begin{defi}
If  $(X_k,d_k, \mathfrak{m}_k, p_k)$, ${k \in \mathbb{N}}$, and $(X_\infty,d_\infty,\mathfrak{m}_\infty,p_\infty)$ are pointed metric measure spaces, we will say that $X_k$ converges to $X_\infty$ in the pointed measured Gromov-Hausdorff topology if there exist Borel measurable $\epsilon_k$--Gromov-Hausdorff approximations such that $\epsilon_k\rightarrow 0$ and $$(f_{\epsilon_k})_{\#}\mathfrak{m}_k\rightharpoonup \mathfrak{m}_\infty$$ weakly.

\end{defi}

%
%
Given$r>0$ and $G$ acting on $X$ by isometries define $$G(r) := \lbrace \gamma \in G \,|\, \gamma p \in B_p (r) \rbrace.$$

\begin{defi}
Let $(X,d_X, p, G)$ and  $ (Y,d_Y,q, H)$ be two metric spaces and $G, H$ groups acting isometrically on $X$ and $Y$ respectively. An \emph{equivariant pointed  $\epsilon$--Gromov-Hausdorff approximation} is a triple $(f,\varphi,\psi)$ of maps 
\[
f: B_p(1/\epsilon, X)\rightarrow B_q(1/\epsilon,Y),
\quad 
 \varphi : G (1/\epsilon)\rightarrow H (1/\epsilon),
\quad 
\psi: H(1/\epsilon)\rightarrow G(1/\epsilon)
\]  
such that
\begin{enumerate}
\item $f(p)=q;$
\item the $\epsilon$--neighbourhood of $f\left(B_p(1/\epsilon,X)\right)$ contains $B_q(1/\epsilon,Y);$
\item if $x,y \in B_p(1/\epsilon,X),$ then 
\[
\left|\,d_X(x,y)-d_Y(f(x),f(y))\,\right|< \epsilon;
\]
\item if $\gamma \in G (1/\epsilon)$ and both $x, \gamma x \in B_p(1/\epsilon,X), $ then $$d_Y(f(\gamma x), \varphi (\gamma)f(x)) < \epsilon; $$
\item if $\lambda \in H (1/\epsilon)$ and both $x, \psi (\lambda)x \in B_p(1/\epsilon,X),$ then  $$d_Y(f(\psi (\lambda)x), \lambda f(x)) < \epsilon. $$
\end{enumerate} As usual, we do not assume that $f$ is continuous or that $\varphi, \psi$ are group homomorphisms.
\end{defi}

\subsection{Structure theory}

Now we state the structural results known for the m.m.s. that will be of interest to us.
First, notice that by Theorem $7$  of \cite{EKS}, the Entropic Curvature Dimension condition \eqref{def.entropiccurv} coupled with $W^{1,2}(\mathfrak{m})$ being a Hilbert space  is equivalent to the  Riemannian Curvature Dimension condition, $RCD^*(K,N).$ So from now on we will always assume that $(X,d,\mathfrak{m}) $ satisfies both of these conditions unless otherwise stated.  

First, without assuming anything on $W^{1,2}(\mathfrak{m}),$ we have the following information on $(X,d,\mathfrak{m}):$
\begin{teo}{\bf (Sturm \cite{SturmII})}\label{teo.atoms}
Let $(X,d,\mathfrak{m})$ be a $CD^*(K,N)$ space.Then:
\begin{enumerate}
\item  $(X,d)$ has Hausdorff dimension bounded above by $ N.$

\item $(X,d)$ is proper, in particular, it is locally compact.

\item  The measure $\mathfrak{m}$ is locally doubling and has no atoms.

\item The measure $\mathfrak{m}$ satisfies the growth condition (\ref{eq.growthcond}), $$\int \exp (-c d^2(x_0,x))  d\mathfrak{m} < \infty,  \quad  \text{ for some }  x_0\in X, c>0.$$
\end{enumerate}

\end{teo}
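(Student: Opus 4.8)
The plan is to extract all four assertions from a single quantitative consequence of the curvature-dimension inequality, the \emph{generalized Bishop--Gromov volume comparison}. First I would establish this comparison. Fixing $x_0\in X$ and $0<r\le R$, I would apply the $(K,N)$-convexity of $\Ent_{\mathfrak{m}}$ encoded in \eqref{eq.entropyKNconvex} to the $\mathbb{W}_2$-geodesic interpolating between the normalized restrictions of $\mathfrak{m}$ to the concentric balls $B_r(x_0)\subset B_R(x_0)$ (equivalently, through the measure contraction property that $CD^*(K,N)$ implies). Unwinding the identity $\mathcal{U}_N=\exp(-\Ent_{\mathfrak{m}}/N)$ from \eqref{eq.deffunctional} together with the explicit coefficients $\sigma_{K,N}^{(t)}$ should produce an estimate of the form $\mathfrak{m}(B_R(x_0))\le C(K,N,R)\,\mathfrak{m}(B_r(x_0))$ on a range of scales dictated by the sign of $K$, and dually the lower density bound $\mathfrak{m}(B_r(x_0))\ge c(x_0)\,r^N$ for small $r$. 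This is the technical heart of the theorem and the step I expect to be the main obstacle: the comparison measures must be arranged so that the distortion coefficients reproduce exactly the model-space volume ratio, while one tracks the case distinctions in the definition of $\sigma_{K,N}^{(t)}$.

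Granting the comparison, the remaining items are soft. Setting $R=2r$ yields the local doubling half of item (3), with constant locally bounded in $r$; a locally doubling measure that is positive and finite on balls makes $(X,d)$ locally compact, and since it is a complete geodesic (hence length) space, the Hopf--Rinow--Cohn-Vossen theorem upgrades this to properness, giving item (2). For item (1), the lower bound $\mathfrak{m}(B_r(x))\ge c(x)\,r^N$ bounds the upper $N$-density of $\mathfrak{m}$ from below, so the standard density comparison theorem yields $\mathcal{H}^N\le C\,\mathfrak{m}$ locally and hence $\dim_{\mathcal{H}}(X)\le N$.

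For the absence of atoms in item (3) I would argue directly from entropy convexity. If $\mathfrak{m}(\{x_0\})=a>0$, then $\delta_{x_0}=a^{-1}\mathfrak{m}|_{\{x_0\}}$ is absolutely continuous with $\Ent_{\mathfrak{m}}(\delta_{x_0})=-\log a$, so $\mathcal{U}_N(\delta_{x_0})=a^{1/N}>0$. Choosing $\mu_1=\mathfrak{m}(A)^{-1}\mathfrak{m}|_A$ for a small ball $A$ whose closure avoids $x_0$ (so that $\mathbb{W}_2(\delta_{x_0},\mu_1)$ is small and the coefficients are finite), inequality \eqref{eq.entropyKNconvex} applied to the guaranteed geodesic $(\mu_t)\subset D(\Ent_{\mathfrak{m}})$ forces $\liminf_{t\to0}\mathcal{U}_N(\mu_t)\ge a^{1/N}$, i.e.\ $\Ent_{\mathfrak{m}}(\mu_t)$ stays bounded above as $t\to0$. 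But this geodesic emanates from the single point $x_0$, so $\supp\mu_t$ lies in a spherical shell about $x_0$ that excludes $x_0$ and shrinks to it; continuity of the finite measure $\mathfrak{m}$ from above then gives $\mathfrak{m}(\supp\mu_t)\to0$. Since each $\mu_t$ is absolutely continuous and concentrated on a set of vanishing $\mathfrak{m}$-measure, Jensen's inequality forces $\Ent_{\mathfrak{m}}(\mu_t)\ge-\log\mathfrak{m}(\supp\mu_t)\to+\infty$, a contradiction. Hence $\mathfrak{m}$ has no atoms.

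Finally, item (4) is immediate from the comparison. When $K<0$ the model volume grows at most exponentially in $R$, so Bishop--Gromov gives $\mathfrak{m}(B_R(x_0))\le C\,e^{cR}$ with constants depending only on $K$, $N$ and $\mathfrak{m}(B_1(x_0))$. Decomposing $X$ into the annuli $B_{n+1}(x_0)\setminus B_n(x_0)$ then gives, for every $c'>0$,
\[
\int \exp\!\big(-c'\,d^2(x_0,x)\big)\,d\mathfrak{m}\;\le\;\sum_{n\ge0}e^{-c' n^2}\,\mathfrak{m}(B_{n+1}(x_0))\;<\;\infty,
\]
since Gaussian decay dominates exponential volume growth. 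This establishes \eqref{eq.growthcond} and completes the plan.
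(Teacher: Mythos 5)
The paper offers no proof of this statement: it is imported verbatim from Sturm's work (and, for item (4), from the discussion in \cite{AmbGigMonRaj}), so there is nothing internal to compare against. Your outline is essentially the standard argument from that literature and is sound in structure: generalized Bishop--Gromov gives local doubling and the lower density bound $\mathfrak{m}(B_r(x))\gtrsim r^N$, doubling plus completeness of a length space gives properness via Hopf--Rinow, the density bound gives $\dim_{\mathcal H}\le N$, exponential volume growth gives the Gaussian integrability, and your entropy blow-up argument for the absence of atoms is correct (with the small repair that the shells $\{t\delta\le d(x_0,\cdot)\le tD\}$ are not nested, so one should apply continuity from above to the nested sets $\bar B_{tD}(x_0)\setminus\{x_0\}\downarrow\emptyset$ instead). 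The one point you should be careful about is which functional the hypothesis actually convexifies. The theorem is stated for $CD^*(K,N)$, whose defining objects are the R\'enyi entropies $S_{N'}(\mu)=-\int\rho^{-1/N'}\,d\mu$ with the $\sigma$-coefficients, not the Boltzmann entropy of \eqref{eq.entropyKNconvex}; the two conditions are known to coincide only under essential non-branching (Theorem 3.12 of \cite{EKS}), which is harmless in the $RCD^*$ setting of this paper but not for a bare $CD^*(K,N)$ space. Working with $S_N$ also fixes the awkwardness in your Bishop--Gromov step: $\mathcal U_N=\exp(-\Ent_{\mathfrak m}/N)$ vanishes on Dirac masses when $\mathfrak m$ is atomless, whereas $S_N(\delta_{x_0})=0$, so the classical interpolation from $\delta_{x_0}$ to a normalized annulus goes through directly and yields the sharp comparison without the two-ball Brunn--Minkowski detour you sketch (which can be made to work, but only after a limiting argument $r\to0$ that you have not supplied). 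Since you flag this step yourself as the main obstacle and leave it unexecuted, treat it as the one genuine hole in an otherwise faithful reconstruction.
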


Now if we assume that $W^{1,2}(\mathfrak{m})$ is a Hilbert space then we can rule out the presence of excessive branching and Finsler geometries. Moreover, it is also stable under measured Gromov-Hausdorff convergence when coupled with the curvature dimension condition $CD^*(K,N).$ The proof of this in the compact case was done by Ambrosio, Gigli and Savar\'e \cite{AmbGigSavI},  the general case was proved by Gigli, Mondino and Savar\'e \cite{GigMonSav}:

\begin{teo}\label{teo.stability}
The class of  metric mesaure spaces that are  both $CD^*(K,N)$ and infinitesimally Hilbertian is precompact under measured Gromov-Hausdorff convergence. 
\end{teo}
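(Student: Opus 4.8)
The plan is to split the statement into two independent tasks: a precompactness statement at the level of pointed (and then measured) Gromov-Hausdorff convergence, which is a soft consequence of volume comparison, and a stability statement ensuring that the two defining properties --- the reduced condition $CD^*(K,N)$ and infinitesimal Hilbertianity --- are inherited by the limit.

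First I would fix a normalization, say $\mathfrak{m}(B_p(1))=1$ at the chosen basepoint $p$, so that the spaces are genuinely comparable. By Theorem \ref{teo.atoms} the reference measures are uniformly locally doubling, with a doubling constant depending only on $K$, $N$ and the radius (the Bishop-Gromov type inequality available on any $CD^*(K,N)$ space). This uniform doubling, together with the control it produces on the measure of balls of fixed radius, lets me invoke Gromov's precompactness theorem to extract a subsequence converging in the pointed Gromov-Hausdorff sense to a complete, separable, geodesic space $(X_\infty,d_\infty,p_\infty)$. To upgrade this to convergence of the measures, I would note that the same doubling bound yields uniform tightness of the pushforwards $(f_{\epsilon_k})_{\#}\mathfrak{m}_k$ on balls of bounded radius, so that after passing to a further subsequence these converge weakly to a Radon measure $\mathfrak{m}_\infty$, producing pointed measured Gromov-Hausdorff convergence to $(X_\infty,d_\infty,\mathfrak{m}_\infty,p_\infty)$.

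It then remains to verify that the limit lies in the same class. For the $CD^*(K,N)$ part I would use the stability of optimal transport along the approximating maps: Wasserstein geodesics, the relative entropy $\Ent_{\mathfrak{m}}$, and the distortion coefficients $\sigma_{K/N}^{(t)}$ all behave continuously or lower-semicontinuously under measured Gromov-Hausdorff convergence, so the entropic convexity inequality (\ref{eq.entropyKNconvex}) passes to the limit. This is the Lott-Sturm-Villani stability argument, adapted to the reduced condition.

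The genuinely delicate step, and the one I expect to be the main obstacle, is preserving infinitesimal Hilbertianity in the limit --- that is, showing that $W^{1,2}(\mathfrak{m}_\infty)$ is again a Hilbert space and that no Finsler-type behavior emerges. Here I would avoid arguing directly with the Cheeger energies, whose Mosco convergence is subtle, and instead exploit the equivalence recorded after Definition \ref{def.entropiccurv} via Theorem $7$ of \cite{EKS} between $RCD^*(K,N)$ and the combination of $CD^e(K,N)$ with linearity of the heat flow. The Evolution Variational Inequality characterizing the gradient flow of the entropy is stable under measured Gromov-Hausdorff convergence, and its linearity --- equivalently the pointwise parallelogram law (\ref{eq.parallelpoint}) --- is then inherited by $(X_\infty,d_\infty,\mathfrak{m}_\infty)$. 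This is precisely the content established in the compact case by Ambrosio, Gigli and Savar\'e and in general by Gigli, Mondino and Savar\'e, so I would follow their EVI-stability route to conclude that the limit is infinitesimally Hilbertian, completing the proof.
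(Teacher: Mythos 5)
The paper offers no proof of this statement: Theorem \ref{teo.stability} is imported from \cite{AmbGigSavI} (compact case) and \cite{GigMonSav} (general, pointed case), so there is no internal argument to compare yours against. Your sketch is a faithful outline of the route actually taken in those references: uniform local doubling from the Bishop--Gromov inequality on $CD^*(K,N)$ spaces, Gromov precompactness plus weak-$*$ extraction of a limit measure after normalizing $\mathfrak{m}(B_p(1))=1$, lower semicontinuity of the entropy to pass the convexity inequality to the limit, and the EVI characterization to carry infinitesimal Hilbertianity over (this last step is exactly the content of \cite{AmbGigSavI} and \cite{GigMonSav}, as you say, and of the $EVI_{K,N}$ formulation in \cite{EKS}). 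Two points you would need to make precise in a full write-up: first, for the endpoints $\mu_0,\mu_1$ the inequality \eqref{eq.entropyKNconvex} passes to the limit only after producing recovery sequences $\mu_i^n\rightharpoonup\mu_i$ with $\Ent_{\mathfrak{m}_n}(\mu_i^n)\to\Ent_{\mathfrak{m}_\infty}(\mu_i)$ (lower semicontinuity alone points the wrong way on the right-hand side, since $\mathcal{U}_N$ is the exponential of $-\Ent/N$); second, for the dimensional condition one must track the coefficients $\sigma^{(t)}_{K/N}$ through the convergence of the Wasserstein distances, which is where the reduced condition $CD^*$ rather than $CD$ is used. Neither is a gap in the strategy, only in the level of detail; since the paper itself delegates the entire proof to the literature, your proposal is if anything more explicit than the source text.
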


Gigli, Rajala and Sturm proved that $RCD^*(K,N)$ spaces are essentially non-branching, that is that optimal transports are concentrated in sets of non-branching geodesics. More precisely they obtained: 

\begin{teo}{\bf (Gigli, Rajala, Sturm \cite{GigRajStu})}\label{teo.essnonbranch}
Let $(X,d,\mathfrak{m})$ be an $RCD^*(K,N)$ space. Then for every $\mu,\nu \in \mathbb{P}_2(X)$ with $\mu \ll \mathfrak{m}$ there exists a unique optimal transport  $\pi \in \OptGeo(\mu,\nu).$ Furthermore, $\pi$ is induced by a map and  concentrated on a set of non-branching geodesics. That is, there exists a Borel set $\Gamma \subset C([0,1],X) $ such that $\pi(\Gamma)=1$ and for every $t \in [0,1)$ the map $e_t: \Gamma \rightarrow X$ is injective.
\end{teo}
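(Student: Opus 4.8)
The plan is to follow the strategy of Rajala--Sturm, establishing simultaneously the uniqueness of the optimal dynamical plan, its representability by a map, and the essential non-branching; the point is that the infinitesimally Hilbertian hypothesis (equivalently, the quadraticity of the Cheeger energy) is exactly what upgrades the raw $CD^*(K,N)$ convexity of $\Ent_{\mathfrak{m}}$ into the rigidity we need. On a general $CD^*(K,N)$ space, of Finsler type, the statement fails, so the proof must genuinely use \eqref{eq.parallelpoint}.

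First I would reduce to a convenient class of marginals. By the $\sigma$-finiteness of $\mathfrak{m}$, the growth condition of Theorem \ref{teo.atoms}, and the fact that restrictions and renormalisations of optimal plans are again optimal, it suffices to treat $\mu_0=\rho_0\mathfrak{m}$ and $\mu_1=\rho_1\mathfrak{m}$ with densities bounded above and with bounded support. For such marginals, applying the $(K,N)$-convexity inequality to the relative entropy yields an $L^\infty$ bound on the densities of the interpolants: any geodesic $(\mu_t)_{t\in[0,1]}$ in $(\mathbb{P}_2(X),\mathbb{W}_2)$ joining them satisfies $\mu_t=\rho_t\mathfrak{m}$ with $\rho_t\in L^\infty(\mathfrak{m})$ for every $t\in[0,1)$. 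Keeping the intermediate measures absolutely continuous, with controlled density, is what permits running the entropy calculus at every interior time.

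The core step is to show that the $\mathbb{W}_2$-geodesic emanating from an absolutely continuous measure is \emph{unique}. Here I would argue by contradiction: given two optimal dynamical plans $\pi^0,\pi^1$ with the same marginals, set $\pi=\tfrac12(\pi^0+\pi^1)$, which is again optimal, and compare the entropies of the three interpolations $\mu_t^0,\mu_t^1$ and $\mu_t=(e_t)_\#\pi$ at an interior time $t$. The $CD^*(K,N)$ condition bounds $\Ent_{\mathfrak{m}}(\mu_t)$ from below by the convex combination of endpoint data, whereas the joint convexity of $\Ent_{\mathfrak{m}}$ together with the quadraticity expressed by the pointwise parallelogram law \eqref{eq.parallelpoint} produces a \emph{strict} defect in the upper bound $\Ent_{\mathfrak{m}}(\mu_t)\le\tfrac12\Ent_{\mathfrak{m}}(\mu_t^0)+\tfrac12\Ent_{\mathfrak{m}}(\mu_t^1)$, unless $\mu_t^0=\mu_t^1$. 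Matching this strict inequality against the convexity lower bound forces $\mu_t^0=\mu_t^1$ for all $t$. The technical heart, and the step I expect to be the main obstacle, is precisely to extract this quantitative strict-convexity defect from infinitesimal Hilbertianity: one must differentiate the entropy along the interpolation, identify the relevant second-order quantity with a Fisher-information-type quadratic form built from $|\nabla\rho_t|_w^2$, and use \eqref{eq.parallelpoint} together with the locality \eqref{eq.locality} to verify that the defect vanishes only when the densities agree. This is the one place where the Hilbertian hypothesis is indispensable.

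Finally, the map structure and the non-branching follow from the uniqueness just obtained, by standard measurable-selection and restriction-and-gluing arguments. If the unique optimal plan were not induced by a map, then from a positive-$\mu_0$-mass set of starting points two distinct geodesics would emanate; splitting $\pi$ accordingly and recombining produces two optimal dynamical plans with identical marginals but different interpolations, contradicting uniqueness, so $\pi=(\mathrm{Id},T)_\#\mu_0$ for a Borel map $T$. For non-branching, suppose a positive-$\pi$-mass set of geodesics shared a common value $x=\gamma_{t_0}=\gamma'_{t_0}$ at an interior time $t_0$ while differing afterwards. Restricting and renormalising $\pi$, and reading the restriction as an optimal plan issuing from the absolutely continuous measure $\mu_{t_0}$ on the interval $[t_0,1]$, the map property forces the continuation from $x$ to be single-valued, contradicting the branching. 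Hence $\pi$ concentrates on a Borel set $\Gamma\subset C([0,1],X)$ on which $e_t$ is injective for every $t\in[0,1)$, which is the asserted essential non-branching.
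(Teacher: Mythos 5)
First, a point of comparison: the paper does not prove Theorem \ref{teo.essnonbranch} at all --- it is quoted verbatim from \cite{GigRajStu} --- so your sketch can only be measured against the original argument, which builds on Rajala--Sturm's essential non-branching theorem for strong $CD(K,\infty)$ spaces. Measured that way, there are genuine gaps. The most serious is your opening reduction to marginals $\mu_0=\rho_0\mathfrak{m}$, $\mu_1=\rho_1\mathfrak{m}$ with bounded densities: the theorem only assumes $\mu\ll\mathfrak{m}$, and $\nu$ may be singular. The case $\nu=\delta_y$ is precisely the ``exponentiation'' that is the main new content of \cite{GigRajStu} over the earlier results requiring both marginals absolutely continuous, and no restriction-and-renormalisation trick turns a singular second marginal into an absolutely continuous one. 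As written, your argument proves a strictly weaker statement.

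Second, the mechanism you propose for exploiting infinitesimal Hilbertianity is not the right one, and I do not believe it can be made rigorous. The second derivative of $\Ent_{\mathfrak{m}}$ along a $\mathbb{W}_2$-geodesic is not a Fisher-information form in $|\nabla\rho_t|_w$ --- that quantity is the entropy dissipation along the \emph{heat flow}; formally the relevant second-order object along a geodesic is a $\Gamma_2$-type expression in the Kantorovich potential, and none of this is differentiable at the regularity available. In the actual proof the parallelogram law \eqref{eq.parallelpoint} enters only indirectly: $RCD$ implies the $EVI$ property, which upgrades the $CD^*(K,N)$ convexity from ``along some geodesic'' to ``along every geodesic'' (strong $CD$), and with that in hand uniqueness and non-branching follow from a purely measure-theoretic mixing argument using strict convexity of $u\mapsto u\log u$ together with $L^\infty$ interpolation bounds; no gradients of densities appear. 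Relatedly, your uniqueness step does not close as stated: the $CD^e$ inequality bounds $\Ent_{\mathfrak{m}}(\mu_t)$ from \emph{above} (it is a lower bound on $\mathcal{U}_N$), and convexity of the entropy under the mixing $\pi=\tfrac12(\pi^0+\pi^1)$ also gives an \emph{upper} bound $\Ent_{\mathfrak{m}}(\mu_t)\le\tfrac12\Ent_{\mathfrak{m}}(\mu_t^0)+\tfrac12\Ent_{\mathfrak{m}}(\mu_t^1)$, so the two estimates point the same way and ``matching'' them yields no contradiction; the published argument instead iterates the mixing to drive the entropy of an interior slice below the bound forced by the density estimates. Your final paragraph (uniqueness $\Rightarrow$ map $\Rightarrow$ non-branching via restriction and gluing) is the standard and essentially correct part of the scheme.
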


In \cite{GigliI} Gigli obtained an extension of Cheeger-Gromoll's Splitting theorem for synthetic Ricci curvature bounds.

\begin{teo}{\bf (Splitting theorem)}\label{teo.split}
Let $(X,d, \mathfrak{m})$ be a $RCD^*(0,N)$ containing a line. Then it is isomorphic to the product of $(\mathbb{R},d_E,\mathcal{L}^1)$ and another space $(Y,d_Y,\nu).$ Moreover:
\begin{itemize}
\item If $N\geq 2,$ then $(Y,d_Y,\nu)$ is a $RCD^*(0,N-1),$

\item if $N\in [1,2),$ then $Y$ is a point.
\end{itemize}
\end{teo}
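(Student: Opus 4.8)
The plan is to reproduce, in the synthetic setting, the classical proof of Cheeger--Gromoll by means of Busemann functions. Recall that a \emph{line} is a geodesic $\gamma:\mathbb{R}\to X$ with $d(\gamma_s,\gamma_t)=|s-t|$ for all $s,t\in\mathbb{R}$. Associated to its two ends I would define the Busemann functions
$$b^{+}(x):=\lim_{t\to+\infty}\bigl(t-d(x,\gamma_t)\bigr),\qquad b^{-}(x):=\lim_{t\to+\infty}\bigl(t-d(x,\gamma_{-t})\bigr),$$
both of which are $1$-Lipschitz and finite by the triangle inequality. The inequality $d(x,\gamma_t)+d(x,\gamma_{-t})\ge 2t$ gives $b^{+}+b^{-}\le 0$ everywhere, with equality along $\gamma$.

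First I would establish that each Busemann function is subharmonic. The key input is the Laplacian comparison theorem for $RCD^*(0,N)$ spaces: the functions $x\mapsto d(x,\gamma_t)$ satisfy $\Delta d(\cdot,\gamma_t)\le (N-1)/d(\cdot,\gamma_t)$ away from $\gamma_t$, so $t-d(\cdot,\gamma_t)$ has Laplacian bounded below by $-(N-1)/d(\cdot,\gamma_t)$. Passing to the limit $t\to+\infty$ (where this bound tends to $0$) and using the stability of the estimate under the locally uniform convergence $t-d(\cdot,\gamma_t)\to b^{+}$ yields $\Delta b^{\pm}\ge 0$ in the weak sense, i.e. $b^{\pm}$ subharmonic. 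Hence $b^{+}+b^{-}$ is subharmonic, nonpositive, and vanishes on $\gamma$; the strong maximum principle for $RCD$ spaces then forces $b^{+}+b^{-}\equiv 0$. Consequently $b:=b^{+}=-b^{-}$ is simultaneously sub- and superharmonic, hence harmonic, and a short argument comparing the slope of $b$ along $\gamma$ with the $1$-Lipschitz bound gives $|\nabla b|_w=1$ $\mathfrak{m}$-a.e.

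The heart of the matter is the rigidity step. I would invoke the self-improved (Bakry--\'Emery) Bochner inequality available on $RCD^*(0,N)$ spaces, which for the harmonic function $b$ reads, in its Hessian form,
$$\tfrac12\,\Delta|\nabla b|_w^2\ \ge\ \bigl|\mathrm{Hess}\,b\bigr|^2+\langle\nabla b,\nabla\Delta b\rangle$$
(the $K=0$ term dropping out). Since $|\nabla b|_w^2\equiv 1$ and $\Delta b=0$, the left-hand side and the second right-hand term both vanish, forcing $\mathrm{Hess}\,b=0$ $\mathfrak{m}$-a.e. Thus $\nabla b$ is a parallel field of unit length. I would then study its associated regular Lagrangian flow: the vanishing of the Hessian makes this flow a one-parameter group acting by measure-preserving isometries whose orbits are precisely the lines parallel to $\gamma$. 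Setting $Y:=b^{-1}(0)$ with the induced distance and the disintegrated measure $\nu$, the flow identifies $(X,d,\mathfrak{m})$ with the product $(\mathbb{R}\times Y,\,d_E\times d_Y,\,\mathcal{L}^1\otimes\nu)$.

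The main obstacle I expect is exactly this passage from the infinitesimal rigidity $\mathrm{Hess}\,b=0$ to a genuine \emph{metric} (not merely measure-theoretic) product: in the smooth case one integrates a parallel field along its flow and invokes de Rham, whereas here one must control the regular Lagrangian flow, use the Sobolev-to-Lipschitz property to upgrade $\mathfrak{m}$-a.e. statements to everywhere statements on the essential support, and verify that the flow lines actually realize distances. Once the splitting is in place, the factor $(Y,d_Y,\nu)$ inherits $RCD^*(0,N-1)$ from the tensorization behaviour of the curvature-dimension condition, which lowers the dimension parameter by one. Finally, when $N\in[1,2)$ the factor $Y$ would have Hausdorff dimension at most $N-1<1$, since $\dim_H X\le N$ by Theorem \ref{teo.atoms} and $\dim_H X=1+\dim_H Y$; as a non-degenerate geodesic space has Hausdorff dimension at least $1$, this forces $Y$ to be a single point.
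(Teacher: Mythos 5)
The paper does not prove this statement at all: Theorem \ref{teo.split} is imported verbatim from Gigli's work \cite{GigliI}, so there is no in-paper argument to compare yours against. What you have written is, in outline, precisely the strategy of that reference: Busemann functions of the two ends, Laplacian comparison to get subharmonicity of $b^{\pm}$, the strong maximum principle to force $b^{+}+b^{-}\equiv 0$, harmonicity and $|\nabla b|_w=1$, the self-improved Bochner inequality to kill the Hessian, and the regular Lagrangian flow of $\nabla b$ together with the Sobolev-to-Lipschitz property to produce the isometric splitting. As a roadmap it is accurate, and you correctly identify the genuinely hard step (upgrading $\mathrm{Hess}\,b=0$ to a metric, not merely measure-theoretic, product).

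Two caveats so that you do not mistake the outline for a proof. First, almost every displayed assertion in your sketch (measure-valued Laplacian comparison for distance functions, stability of subharmonicity under the limit $t\to\infty$, the strong maximum principle in this generality, the Hessian form of Bochner, the fact that the flow of a Hessian-free unit gradient acts by isometries) is itself a substantial theorem; the cited reference spends most of its length on exactly these points, and your ``short argument'' for $|\nabla b|_w=1$ in particular is not spelled out. Second, the dimension drop from $N$ to $N-1$ for the factor $Y$ is \emph{not} a consequence of tensorization, which goes from factors to the product; what is needed is the converse statement that if $\mathbb{R}\times Y$ is $RCD^*(0,N)$ then $Y$ is $RCD^*(0,N-1)$, and this ``de-tensorization'' is a separate nontrivial result (Gigli's original argument only gave $Y\in RCD^*(0,N)$, with the improvement to $N-1$ obtained afterwards). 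Your treatment of the case $N\in[1,2)$ via $\dim_{\mathcal H}(\mathbb{R}\times Y)\ge 1+\dim_{\mathcal H}Y$ and Theorem \ref{teo.atoms} is fine. In short: correct skeleton, consistent with the literature the paper cites, but all the load-bearing steps are delegated to external theorems rather than proved.
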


For a pointed metric measure space $(X,d,\mathfrak{m},p)$ and a constant $\lambda >0.$ We can rescale the metric by taking $\lambda d$ and normalize the measure in the following way
$$ \mathfrak{m}_\lambda := \left(  \int_{B_{p}(\lambda^{-1})}1-\lambda d(p,\cdot)d\mathfrak{m}\right)^{-1}\mathfrak{m}. $$

Given a sequence $\lambda_n \rightarrow \infty$ we can consider a sequence of rescaled and normalized pointed metric measure spaces $\lbrace (X_n, d_n,\mathfrak{m}_n,p_n ) \rbrace_{n \in \mathbb{N}}$

We define the tangent at a point $p \in X$ as:
$$\Tang(X,p):= \lbrace (Y,d_Y,\nu, y) |\, (X_n, d_n,\mathfrak{m}_n,p_n )\rightarrow   (Y,d_Y,\nu, y)   \rbrace $$

For  any  $\lambda >0,$ the space $(X, \lambda d, \mathfrak{m})$ is a $RCD^*(\lambda^{-2}K,N) $ space provided $(X,d,\mathfrak{m})$ is an $RCD^*(K,N)$ space. Then the stability theorem \ref{teo.stability}  implies that for any $p \in X$  the set $\Tang(X,p)$ is non-empty and consists of $RCD^*(0,N)$ spaces.
 
\begin{defi}
Let  $k\in [1,N]\cap \mathbb{N}.$ A point $x \in X$ is called $k-$regular if   $\Tang(X,x)= \lbrace (\mathbb{R}^k,d_E,\mathcal{L}^k,0)\rbrace.$ We denote the set of $k-$regular points as $\mathcal{R}_k.$  
\end{defi}


In \cite{MonNab}  Mondino and Naber   proved that $\mathfrak{m}-$a.e. point in $X$ has a unique Eulidean tangent, however the dimension of these tangents might vary from point to point.

\begin{teo}{\bf ( Mondino, Naber \cite{MonNab})}\label{teo.rect}
Let $(X,d,\mathfrak{m})$ be an $RCD^*(K,N)$ space for some $K \in \mathbb{R},$ $N \in (1, \infty).$ Then $\mathfrak{m}(X-\cup_{k=1}^{\lfloor N \rfloor}\mathcal{R}_k )=0. $ Furthermore, every $\mathcal{R}_k$ is $k-$rectifiable, i.e. it can be covered up to a $\mathfrak{m}-$negligible set by a countable collection of Borel subsets which are bi-Lipschitz equivalent to Borel subsets of $\mathbb{R}^k.$ 
\end{teo}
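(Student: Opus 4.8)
The plan is to split Theorem~\ref{teo.rect} into its two assertions and treat them in order: first that $\mathfrak{m}$-almost every point admits a unique Euclidean tangent of some integer dimension $k\le\lfloor N\rfloor$ (so that $\mathfrak{m}(X\setminus\cup_k\mathcal{R}_k)=0$), and second that each stratum $\mathcal{R}_k$ can be covered, up to an $\mathfrak{m}$-null set, by countably many subsets carried bi-Lipschitz into $\mathbb{R}^k$. The main analytic engine throughout is the notion of a $\delta$-\emph{splitting map}: a map $u=(u_1,\dots,u_k)\colon B_x(r)\to\mathbb{R}^k$ whose components are harmonic for the Laplacian associated to the Cheeger energy $\Ch_\mathfrak{m}$, whose gradients are almost orthonormal in the sense that the average of $|\langle\nabla u_i,\nabla u_j\rangle-\delta_{ij}|$ over $B_x(r)$ is smaller than $\delta$, and whose Hessians are quantitatively small on average. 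These play the role of the approximate distance-to-hyperplane coordinates of the Cheeger--Colding theory of Ricci limits.

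For the first assertion I would argue by iterating tangents and applying the Splitting Theorem~\ref{teo.split}. By the discussion preceding the definition of $\mathcal{R}_k$, $\Tang(X,p)$ is nonempty and consists of $RCD^*(0,N)$ spaces, and one may take tangents of tangents; a measure-theoretic density and blow-up argument then shows that at $\mathfrak{m}$-a.e. $x$ every tangent $(Y,d_Y,\nu,y)$ is a metric measure cone whose own iterated tangents share the same structure. Whenever such a tangent contains a line, Theorem~\ref{teo.split} factors off an $\mathbb{R}$ and drops the dimension parameter from $N$ to $N-1$; iterating this reduction, which must terminate since $N<\infty$, forces the tangent to be a Euclidean factor $\mathbb{R}^k$ with $k\le\lfloor N\rfloor$. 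The $\mathfrak{m}$-a.e. uniqueness of tangents then promotes this to $x\in\mathcal{R}_k$, giving $\mathfrak{m}(X\setminus\cup_{k=1}^{\lfloor N\rfloor}\mathcal{R}_k)=0$.

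The heart of the argument is the rectifiability of each $\mathcal{R}_k$. Fix $x\in\mathcal{R}_k$; since the tangent is $\mathbb{R}^k$, for every $\delta>0$ there is a scale $r$ at which $B_x(r)$ is measured-Gromov--Hausdorff close to a Euclidean ball. On such a ball I would solve $k$ Dirichlet-type problems to produce harmonic coordinates $u_1,\dots,u_k$ and then invoke the Bochner inequality available on $RCD^*(K,N)$ spaces, together with the local Poincar\'e inequality and the doubling property from Theorem~\ref{teo.atoms}, to verify that $u=(u_1,\dots,u_k)$ is a $\delta$-splitting map with small integral Hessian. The upper Lipschitz estimate for $u$ follows directly from the gradient bounds on the $u_i$.

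The step I expect to be the main obstacle is the \emph{lower} bi-Lipschitz bound: showing that $u$ collapses no direction and is genuinely bi-Lipschitz onto its image on a subset of $\mathcal{R}_k$ of nearly full measure. Since one cannot differentiate pointwise, I would integrate along geodesics using a segment (telescoping) inequality and control the bad set through a maximal-function estimate on the Hessian of $u$: because $\langle\nabla u_i,\nabla u_j\rangle\approx\delta_{ij}$ in the integral sense and the excess Hessian is small, $u$ moves most pairs of points by comparable distances, yielding $(1+\varepsilon)$-bi-Lipschitz behavior after discarding a set of small measure. Exhausting $\mathcal{R}_k$ by countably many such charts at dyadic scales and removing the accumulated null sets then produces the countable bi-Lipschitz covering by subsets of $\mathbb{R}^k$. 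The quantitative rigidity feeding this lower bound is the almost-splitting phenomenon, which in the synthetic setting rests on the stability Theorem~\ref{teo.stability} and the essentially non-branching property of Theorem~\ref{teo.essnonbranch}.
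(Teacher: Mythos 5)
This theorem is not proved in the paper at all: it is imported verbatim from Mondino--Naber \cite{MonNab} as a structural prerequisite, so there is no internal argument to compare yours against. Judged on its own terms, your outline does identify the correct architecture of the external proof (iterated tangents plus the Splitting Theorem for the existence of Euclidean tangents; $\delta$-splitting maps, the Bochner inequality, a segment inequality and maximal-function estimates for the bi-Lipschitz charts), but as a proof it has a genuine logical gap in the first part. The blow-up/splitting iteration only yields that at $\mathfrak{m}$-a.e.\ $x$ \emph{some} element of $\Tang(X,x)$ is $(\mathbb{R}^k,d_E,\mathcal{L}^k,0)$; the set $\mathcal{R}_k$ as defined here requires $\Tang(X,x)$ to be the \emph{singleton} $\lbrace(\mathbb{R}^k,d_E,\mathcal{L}^k,0)\rbrace$. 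Your sentence ``the $\mathfrak{m}$-a.e.\ uniqueness of tangents then promotes this to $x\in\mathcal{R}_k$'' assumes exactly the point that needs proof, and in Mondino--Naber the uniqueness is derived \emph{a posteriori} from the rectifiability (the $(1+\varepsilon)$-bi-Lipschitz charts with $\varepsilon\to 0$ force every tangent at a density point to be $\mathbb{R}^k$), so the logical order of your two halves cannot be maintained: the first assertion, with the definition of $\mathcal{R}_k$ used in this paper, is a consequence of the second, not a preliminary to it.

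Two smaller points. First, your appeal to tangents being metric measure cones is a Cheeger--Colding (Ricci-limit) mechanism that is neither available nor used in the synthetic setting; the argument of Gigli--Mondino--Rajala instead selects a tangent splitting off a maximal Euclidean factor and derives a contradiction by producing a line in a further iterated tangent. Second, the lower bi-Lipschitz bound --- which you correctly flag as the main obstacle --- is only described, not carried out; since that estimate is the entire technical content of the theorem, the proposal should be read as a roadmap of \cite{MonNab} rather than a proof.
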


In the case of Ricci limits, Colding and Naber \cite{ColNab} showed  that there exists $l \in [1, N]\cap \mathbb{N}$ such that $\mathfrak{m}(X-\mathcal{R}_l)=0.$  It is not known for general $RCD^*(K,N)$ spaces if there exists a unique stratum of positive measure. However Kitabeppu and Lakzian \cite{KitLak} studied so called low dimensional $RCD^*(K,N)$ spaces and obtained the following characterization:

\begin{teo}\label{teo.lowdim}
Let $(X,d,\mathfrak{m})$ be an $RCD^*(K,N)$ space for some $K \in \mathbb{R}$ and $N \in (1,\infty).$ The following statements are equivalent:
\begin{enumerate}
\item $\mathcal{R}_1 \neq \emptyset,$

\item $\mathcal{R}_j = \emptyset$ for any $j \geq 2,$

\item $\mathfrak{m}(\mathcal{R}_j)=0$ for any $j\geq 2,$

\item $X$ is isometric to $\mathbb{R},$ to $[0,\infty),$ to $\mathbb{S}^1(r) $ for $r>0,$ or to $[0,a]$ for $a>0.$
\end{enumerate}
\end{teo}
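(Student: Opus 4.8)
The plan is to prove the four statements equivalent by establishing the cycle $(1)\Rightarrow(4)\Rightarrow(2)\Rightarrow(3)\Rightarrow(1)$, so that only the geometric rigidity $(1)\Rightarrow(4)$ carries real content. I would dispose of the three short arrows first. For $(4)\Rightarrow(2)$ I would inspect the four model spaces directly: at every interior point the blow-up is $(\mathbb{R},d_E,\mathcal{L}^1,0)$, while the tangent at an endpoint of $[0,\infty)$ or $[0,a]$ is $([0,\infty),d_E,\mathcal{L}^1,0)$, which is never of the form $\mathbb{R}^j$; hence no point lies in $\mathcal{R}_j$ for $j\geq2$, so $\mathcal{R}_j=\emptyset$. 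The implication $(2)\Rightarrow(3)$ is immediate, since the empty set is $\mathfrak{m}$-null. For $(3)\Rightarrow(1)$ I would invoke Theorem \ref{teo.rect}: the strata are disjoint and $\mathfrak{m}(X\setminus\bigcup_k\mathcal{R}_k)=0$, so once every higher stratum is $\mathfrak{m}$-null we get $\mathfrak{m}(\mathcal{R}_1)=\mathfrak{m}(X)>0$, forcing $\mathcal{R}_1\neq\emptyset$.

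The heart of the matter is $(1)\Rightarrow(4)$. Fix $x\in\mathcal{R}_1$, so that $\Tang(X,x)=\{(\mathbb{R},d_E,\mathcal{L}^1,0)\}$. My first goal would be a purely local statement: a suitable ball $B_r(x)$ is isometric to an interval. To get this I would blow up at $x$, so that for some sequence $\lambda_n\to\infty$ the rescaled normalized spaces $(X,\lambda_n d,\mathfrak{m}_{\lambda_n},x)$ converge to $\mathbb{R}$; for $r$ small this means $B_r(x)$ carries two geodesics issuing from $x$ whose rescaled limits are the two opposite rays of the line in the tangent. Using essential non-branching (Theorem \ref{teo.essnonbranch}) together with the local doubling and no-atom properties of Theorem \ref{teo.atoms}, I would argue that these two geodesics cannot be joined by any third direction and that the $1$-density is locally constant equal to that at $x$, upgrading the measured-Gromov--Hausdorff closeness to an honest isometry of $B_r(x)$ with an interval. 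Consequently $\mathcal{R}_1$ is open and every one of its points has an interval neighbourhood.

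Next I would globalise. Since $(X,d)$ is complete, geodesic and proper (Theorem \ref{teo.atoms}) and $\mathcal{R}_1$ is open, connectedness lets me propagate the interval structure: extending a unit-speed geodesic maximally in both directions from $x$, at each interior point the local model is again an interval, and the process can only terminate by closing up (giving $\mathbb{S}^1(r)$), by running forever (giving $\mathbb{R}$, or $[0,\infty)$ if exactly one end terminates), or by reaching a genuine endpoint (giving $[0,a]$). A point where the extension halts must have tangent $([0,\infty),d_E,\mathcal{L}^1,0)$ rather than a higher-dimensional tangent, and ruling out the latter is exactly the step where one must show that no stratum $\mathcal{R}_j$ with $j\geq2$ can abut or accumulate onto the one-dimensional part. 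Collecting the cases then yields that $(X,d)$ is isometric to one of $\mathbb{R}$, $[0,\infty)$, $\mathbb{S}^1(r)$ or $[0,a]$, which is $(4)$.

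The main obstacle is precisely the passage from the infinitesimal data at a single $\mathbb{R}$-tangent to the rigid global model, and within it the step forbidding a higher-dimensional stratum from coexisting with $\mathcal{R}_1$. I expect the decisive input to be essential non-branching combined with Bishop--Gromov-type volume monotonicity: a branch point, or a transition to a $2$-dimensional tangent, would produce either branching geodesics or a volume density incompatible with the $1$-density propagated from $x$. Should a direct argument prove delicate, an alternative is to pass to tangent cones, which are $RCD^*(0,N)$ by the stability theorem (Theorem \ref{teo.stability}), and apply the Splitting theorem (Theorem \ref{teo.split}) to the line that appears at each $1$-regular point; the residual work is then to transfer the resulting splitting of the cone back into a local isometry of $X$ with an interval.
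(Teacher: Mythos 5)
The paper does not actually prove this statement: Theorem \ref{teo.lowdim} is quoted in the preliminaries as a cited result of Kitabeppu--Lakzian \cite{KitLak}, so there is no in-paper proof to compare against; I can only assess your argument on its own terms. Your cycle $(1)\Rightarrow(4)\Rightarrow(2)\Rightarrow(3)\Rightarrow(1)$ is a sensible skeleton and the three short arrows are correct: $(4)\Rightarrow(2)$ holds for purely metric reasons (any tangent of an interval or a circle embeds isometrically in $\mathbb{R}$, hence is never $\mathbb{R}^j$ for $j\geq 2$), $(2)\Rightarrow(3)$ is trivial, and $(3)\Rightarrow(1)$ follows from Theorem \ref{teo.rect} once one notes $\mathfrak{m}(X)>0$.

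The genuine gap is in $(1)\Rightarrow(4)$, which is the entire content of the theorem. Your claim that ``a suitable ball $B_r(x)$ is isometric to an interval'' is asserted rather than derived: pointed measured Gromov--Hausdorff convergence of the blow-ups at the single point $x$ only controls the rescaled balls $B_{1/\lambda_n}(x)$ up to errors tending to zero, and by itself does not exclude that each such ball is two-dimensional with width $o(1/\lambda_n)$, nor that points arbitrarily close to $x$ lie in $\mathcal{R}_2$. Writing that you ``would argue that these two geodesics cannot be joined by any third direction and that the $1$-density is locally constant'' names the desired conclusion instead of proving it, and you concede as much when you call this passage the main obstacle. The mechanism that closes it --- which you correctly guess but do not execute --- is Bishop--Gromov volume comparison: a $1$-regular point has finite positive $1$-dimensional density, monotonicity of the volume ratios propagates a bound of the form $\mathfrak{m}(B_r(y))\leq Cr$ to all nearby points $y$ and all small $r$, and such linear volume growth is incompatible with any tangent $(\mathbb{R}^j,d_E,\mathcal{L}^j,0)$ for $j\geq 2$; this yields $(1)\Rightarrow(2)$ directly, after which the identification of $X$ with one of the four model spaces uses that $\mathfrak{m}$-a.e.\ point is $1$-regular together with essential non-branching. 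As written, your argument stops exactly where the theorem begins, so the proposal should be regarded as a plausible outline with its central step missing.
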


Kitabeppu introduced in \cite{Kit} a new concept of dimension that coincides with previous definitions made by Colding and Naber for Ricci limit spaces \cite{ColNab} and by Han \cite{Han}.

\begin{defi}{(\bf Analytic dimension \cite{Kit}.)}
Let $(X,d,\mathfrak{m})$ be an $RCD^*(K,N)$ space. The analytic dimension of $X,$ $\underline{\dim} X,$ is defined as the largest number $k$ such that $\mathcal{R}_k\neq \emptyset$ and $\mathcal{R}_l =\emptyset$ for any $l>k.$ Equivalently, $\underline{\dim}X$ is the largest number $k$ such that $\mathfrak{m}(\mathcal{R}_k)>0.$ 
\end{defi}

From Theorem \ref{teo.atoms} we know  that the Hausdorff dimension, $\dim_{\mathcal{H}}X$ ,  of  an $RCD^*(K,N)$ space  is bounded above by $N.$   Since these spaces are complete and separable,  it follows (see discussion in \cite{GuijSan}) that for the  topological dimension we have  $\dim_T X \leq \dim_{\mathcal{H}}X \leq N.$


Using the concept of analytic dimension it was possible to obtain the lower semicontinuity of tangent cones in $RCD^*(K,N)$ spaces.

\begin{teo}{\bf (Lower semicontinuity of the analytic dimension \cite{Kit}.)}\label{thm.lscdim}
Let $(X_n,d_n,\mathfrak{m}_n,x_n)$ be a sequence of $RCD^*(K,N)$ spaces converging in the pointed measured Gromov-Hausdorff topology to  $(X_\infty,d_\infty,\mathfrak{m}_\infty,x_\infty).$ Then 
$$\underline{\dim}(X_\infty,d_\infty,\mathfrak{m}_\infty)\leq \liminf_{n\rightarrow \infty}\underline{\dim}(X_n,d_n,\mathfrak{m}_n).  $$
\end{teo}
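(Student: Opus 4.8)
The plan is to prove the inequality by fixing $k:=\underline{\dim}(X_\infty)$ and showing that $\underline{\dim}(X_n)\ge k$ for all sufficiently large $n$; since $\liminf_n\underline{\dim}(X_n)$ is then $\ge k$, the conclusion follows. After passing to a subsequence realizing the $\liminf$, I may assume $\underline{\dim}(X_n)$ equals a fixed integer $L$, and the goal becomes $L\ge k$. By the definition of the analytic dimension there is a $k$-regular point $p_\infty\in X_\infty$, that is $\Tang(X_\infty,p_\infty)=\{(\mathbb{R}^k,d_E,\mathcal{L}^k,0)\}$, so the rescalings of $X_\infty$ by factors $r^{-1}\to\infty$ (with the measure normalized as in the definition of $\Tang$) converge to $\mathbb{R}^k$ as $r\to0$.

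First I would run a diagonal argument across the two convergences. Choosing basepoints $p_n\to p_\infty$ from the Gromov--Hausdorff approximations, for each fixed scale $r$ the correspondingly rescaled spaces $(X_n,r^{-1}d_n,\ldots,p_n)$ converge as $n\to\infty$ to the rescaling of $X_\infty$ at scale $r$, which in turn converges to $\mathbb{R}^k$ as $r\to0$. A standard slowly-varying diagonal selection then produces scales $s_n\to0$ such that
\[
Z_n:=(X_n,\,s_n^{-1}d_n,\,\mathfrak{m}_n',\,p_n)\ \longrightarrow\ (\mathbb{R}^k,d_E,\mathcal{L}^k,0)
\]
in the pointed measured Gromov--Hausdorff topology, where $\mathfrak{m}_n'$ is the rescaled normalized measure. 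Each $Z_n$ is an $RCD^*(s_n^2K,N)$ space with $s_n^2K\to0$. I emphasise that this is the benign direction of the diagonal argument: it produces a single limit of rescalings and does not attempt to interchange the dimensions of iterated tangents.

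The next step is to note that the analytic dimension is invariant under a global rescaling of the metric, since a fixed dilation commutes with the blow-up procedure defining $\Tang$; hence the regular sets $\mathcal{R}_j$, and thus $\underline{\dim}$, are unchanged. Therefore $\underline{\dim}(Z_n)=\underline{\dim}(X_n)=L$, and it suffices to prove the reduced statement: \emph{a sequence of $RCD^*(K_n,N)$ spaces with $K_n\to0$ converging to $\mathbb{R}^k$ must eventually satisfy $\mathcal{R}_k\neq\emptyset$}, for then $L=\underline{\dim}(Z_n)\ge k$, as required.

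Finally I would establish the reduced statement by an almost-rigidity argument, and this is the step I expect to be the main obstacle. Because the limit $\mathbb{R}^k$ is self-similar, the convergence $Z_n\to\mathbb{R}^k$ forces the balls around $p_n$ to be Gromov--Hausdorff close to Euclidean $k$-balls at \emph{every} scale from order one down to order $s_n^{-1}$, while the weak convergence of the normalized measures makes these balls carry almost-Euclidean, $k$-dimensional volume ratios; Bishop--Gromov monotonicity (with $K_n\to0$) controls this comparison uniformly. The $\varepsilon$-regularity and rectifiability theory of Mondino--Naber (Theorem \ref{teo.rect}) should then yield, for large $n$, a point all of whose iterated tangents are $\mathbb{R}^k$, that is a $k$-regular point of $Z_n$, contradicting $\mathcal{R}_k(Z_n)=\emptyset$ whenever $L<k$. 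The delicate parts are the uniformity of this multi-scale closeness along the diagonal and the transfer of the volume-rigidity input from the limit $\mathbb{R}^k$ back to the approximants $Z_n$; assembling these is where the real work of the proof lies.
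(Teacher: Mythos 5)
The paper itself offers no proof of this statement; it is imported wholesale from Kitabeppu \cite{Kit}, so there is no in-paper argument to compare yours against. On its own terms, the first part of your proposal is sound and standard: passing to a subsequence realizing the $\liminf$, re-pointing the approximations at a $k$-regular point $p_\infty\in\mathcal{R}_k(X_\infty)$, running a slowly-varying diagonal selection to get rescalings $Z_n=(X_n,s_n^{-1}d_n,\mathfrak{m}_n',p_n)\to(\mathbb{R}^k,d_E,\mathcal{L}^k,0)$, and noting that $\underline{\dim}$ is unchanged by a global dilation. This correctly reduces the theorem to your ``reduced statement.''

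The trouble is that the reduced statement \emph{is} the theorem: it is a sequential form of the main result of \cite{Kit} (an $RCD^*$ space whose unit ball is measured-GH close to the unit ball of $\mathbb{R}^k$ has $\underline{\dim}\ge k$), and your sketch of it does not close. The convergence $Z_n\to\mathbb{R}^k$ controls balls around $p_n$ only at scales of order $1$ and above in $Z_n$, i.e.\ at scales between $s_n$ and $O(1)$ back in $X_n$; it gives no information at scales below $s_n$, and self-similarity of the limit does not propagate GH-closeness downward --- a ball can be $\epsilon$-close to $B_1(0)\subset\mathbb{R}^k$ while its radius-$10^{-3}$ sub-balls look nothing like $\mathbb{R}^k$. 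Since $k$-regularity of a point of $Z_n$ is a condition on \emph{all} scales tending to $0$, the entire content of the theorem is the passage from closeness at one scale to closeness at all smaller scales at some (positive-measure set of) points, for each fixed $n$. Theorem \ref{teo.rect} is a rectifiability statement, not an $\varepsilon$-regularity theorem, and does not supply this; what is needed is the quantitative volume-density/Bishop--Gromov mechanism that constitutes the main theorem of \cite{Kit}. As written, your proposal reduces the corollary to the cited paper's main result and stops exactly where the real proof begins --- which you candidly acknowledge, but it means the argument is incomplete, and the one concrete claim you make about that step (``close to Euclidean $k$-balls at every scale'') misstates what the hypotheses actually give.
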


Properties of the optimal transports with first marginal absolutely continuous with respect to the reference measure were studied by Kell in \cite{Kell}. He proved that changes in the reference measure, under the assumption that the measures are essentially non-branching and qualitatively non-degenerate, occur in the same measure class. In particular for $RCD^*(K,N)$ the following result was obtained:

\begin{teo}{\bf (Kell \cite{Kell})}\label{teo.equivmeas}
 If $(X,d,\mathfrak{m}_1)$ and $(X,d,\mathfrak{m}_2)$ are both $RCD^*(K,N)$ spaces for $N \in [1,\infty),$ then $\mathfrak{m}_1$ and $\mathfrak{m}_2$ must be mutually absolutely continuous. 
\end{teo}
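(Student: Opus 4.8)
The plan is to split the statement into the two ingredients on which Kell's measure rigidity rests — \emph{essential non-branching} and \emph{qualitative non-degeneracy} — verify both for an $RCD^*(K,N)$ reference measure, and then run a contradiction argument based on contracting a positive-measure set toward a good base point along the (purely metric) geodesic structure that the two measures share. By symmetry it suffices to prove $\mathfrak{m}_1 \ll \mathfrak{m}_2$; the reverse inclusion follows by interchanging the roles of $\mathfrak{m}_1$ and $\mathfrak{m}_2$, and the two together give mutual absolute continuity.

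The first ingredient is free: Theorem \ref{teo.essnonbranch} already tells us that both $(X,d,\mathfrak{m}_1)$ and $(X,d,\mathfrak{m}_2)$ are essentially non-branching, and, crucially, that for a source $\mu_0 \ll \mathfrak{m}_i$ the optimal transport toward any target is unique, induced by a map, and concentrated on a Borel set $\Gamma$ of non-branching geodesics with $e_t$ injective for $t<1$. I would record at this point the elementary metric observation that the set of geodesics realizing the optimal coupling between prescribed endpoints depends only on $d$, through the cost $d^2$, and not on the choice of reference measure; the measure only dictates which couplings have absolutely continuous marginals. This is exactly what allows a single geodesic contraction to be read simultaneously against $\mathfrak{m}_1$ and $\mathfrak{m}_2$.

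Second, I would extract qualitative non-degeneracy directly from the curvature--dimension inequality. Fix a base point $x_0$ in the full-measure set where tangents are Euclidean, and choose $R$ small enough that $K d^2 < N\pi^2$ on $B_R(x_0)$. For a Borel set $A \subseteq B_R(x_0)$ with $\mathfrak{m}_i(A)>0$, let $A_t := \{\gamma_t : \gamma \in \Geo(X),\ \gamma_0 \in A,\ \gamma_1 = x_0\}$ be the contraction of $A$ toward $x_0$, so that $A_t \to A$ as $t \to 0$. Feeding the interpolation from $\tfrac{\mathfrak{m}_i|_A}{\mathfrak{m}_i(A)}$ to $\delta_{x_0}$ into the density form of $CD^*(K,N)$ (equivalently, into the measure contraction property it implies), the distortion coefficients yield
\begin{equation*}
\mathfrak{m}_i(A_t) \;\geq\; \int_A \sigma_{K,N}^{(1-t)}(d(x_0,x))^{N}\,d\mathfrak{m}_i(x)\;\geq\; \Big(\inf_{x \in B_R(x_0)} \sigma_{K,N}^{(1-t)}(d(x_0,x))\Big)^{N}\mathfrak{m}_i(A)\;=:\;f(t)\,\mathfrak{m}_i(A),
\end{equation*}
where $f(t)>0$ for every $t\in(0,1)$ and $f(t)\to 1$ as $t\to 0$ because $\sigma_{K,N}^{(1)}\equiv 1$. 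Thus each $\mathfrak{m}_i$ is qualitatively non-degenerate, with a modulus $f$ that is \emph{the same} for both measures and depends only on $K$, $N$ and $R$.

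Finally, the rigidity step, which is the technical heart. Suppose $\mathfrak{m}_1 \not\ll \mathfrak{m}_2$, so that inner regularity furnishes a Borel set $A$ inside a small ball $B_R(x_0)$ about a point $x_0$ good for both measures, with $\mathfrak{m}_2(A)=0<\mathfrak{m}_1(A)$. The idea is to contract $A$ toward $x_0$ and compare the two measures on the solid cone $C:=\bigcup_{t\in(0,1)}A_t$. Disintegrating each $\mathfrak{m}_i$ along the geodesics issuing from $x_0$ — the radial disintegration afforded by essential non-branching — writes $\mathfrak{m}_i(C)$ as an integral over the endpoint set $A$ of one-dimensional conditional measures along geodesics; since these geodesics are common to both measures, the $\mathfrak{m}_2$-nullity of $A$ propagates to $\mathfrak{m}_2(C)=0$, while the displayed lower bound keeps $\mathfrak{m}_1(C)>0$. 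Running the unique $\mathfrak{m}_1$-optimal transport from a bounded-density restriction of $\mathfrak{m}_1|_C$ toward $x_0$ and invoking the uniqueness and injectivity of Theorem \ref{teo.essnonbranch} pins this transport to the same non-branching geodesics, and feeding the conclusion back through the disintegration contradicts $\mathfrak{m}_1(A)>0$. I expect this transfer of nullity between the two measures through their shared geodesic structure — setting up the radial disintegration, checking that its conditional measures are non-atomic and suitably controlled, and excluding branching on a positive-measure set — to be the main obstacle; everything upstream is either quoted (essential non-branching) or a direct unwinding of the distortion coefficients (non-degeneracy), so the genuine difficulty is concentrated here.
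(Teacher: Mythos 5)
The paper does not actually prove this statement: Theorem \ref{teo.equivmeas} is quoted verbatim from Kell's measure-rigidity paper \cite{Kell}, so there is no in-paper proof to compare against, and your proposal has to stand on its own as a reconstruction of Kell's argument. Your first two steps are sound in spirit: essential non-branching is indeed free from Theorem \ref{teo.essnonbranch}, and the qualitative non-degeneracy of each $\mathfrak{m}_i$ does follow from the distortion-coefficient (measure contraction) estimate you display, modulo the routine care needed to feed a Dirac endpoint into the entropic inequality. These are exactly the two hypotheses under which Kell's rigidity theorem operates.

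The genuine gap is in the rigidity step, which is the entire content of the theorem and which your sketch does not actually carry out. The pivotal claim --- that disintegrating $\mathfrak{m}_2$ along the radial geodesics through $C=\bigcup_{t\in(0,1)}A_t$ lets ``the $\mathfrak{m}_2$-nullity of $A$ propagate to $\mathfrak{m}_2(C)=0$'' --- is unjustified and false as a general implication. The disintegration of $\mathfrak{m}_2$ over the family of geodesics issuing from $x_0$ comes with a quotient measure on that family which need not be absolutely continuous with respect to $\mathfrak{m}_2\vert_A$: in $(\mathbb{R}^2,d_E,\mathcal{L}^2)$ with $x_0=0$ and $A=\partial B_1(0)$ one has $\mathcal{L}^2(A)=0$ while the cone $C$ is the punctured unit ball, of full positive measure, because the quotient measure is the angular measure $d\theta$, not $\mathcal{L}^2\vert_A$. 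So nullity of the endpoint set says nothing about nullity of the swept cone, and the subsequent sentence (``feeding the conclusion back through the disintegration contradicts $\mathfrak{m}_1(A)>0$'') does not identify any actual contradiction even if one grants $\mathfrak{m}_2(C)=0$ and $\mathfrak{m}_1(C)>0$ --- that is merely a restatement of non-absolute-continuity on a different set. Kell's actual proof is substantially more delicate (it combines the non-degeneracy modulus with density points and the invertibility of the optimal maps guaranteed by essential non-branching), and none of that machinery appears in your outline. As it stands, the proposal verifies the hypotheses of the theorem you are trying to prove but replaces its proof with an incorrect measure-theoretic shortcut.
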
 
 
\subsection{Isometries, group actions, and quotients} 
n this section we will discuss some properties of the group of isometries of  a  $RCD^*(K,N)$ space.
A priori one would assume that the metric structure and the measure have no relationship at all; however we have the following result.

\begin{prop}{\bf (\cite{GuijSan}, \cite{Sosa})}\label{prop.fix}
If $g\in \Iso(X),$   the set $\Fix(g) := \lbrace x \in X |\,  gx=x \rbrace$ has $\mathfrak{m}-$measure zero.
\end{prop}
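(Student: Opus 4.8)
The plan is to argue by contradiction: assuming $g\neq\mathrm{id}$ (the case $g=\mathrm{id}$ being the exception where $\Fix(g)=X$), I would suppose $\mathfrak{m}(\Fix(g))>0$ and show this forces $g=\mathrm{id}$. It is convenient to record that $\Fix(g)=f^{-1}(0)$ is closed, where $f(x):=d(x,gx)$ is the displacement function, which is $2$-Lipschitz since $g$ is an isometry. The overall idea is to pass to a tangent cone at a good fixed point, exploit the Euclidean rigidity there, and then propagate triviality back to $X$ using essential non-branching.

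First I would localize to a single stratum. By Theorem \ref{teo.rect} we have $\mathfrak{m}(X-\cup_k\mathcal{R}_k)=0$, so if $\mathfrak{m}(\Fix(g))>0$ there is some $k$ with $\mathfrak{m}(\Fix(g)\cap\mathcal{R}_k)>0$. Since $\mathcal{R}_k$ is $k$-rectifiable, the Lebesgue density theorem lets me pick a point $p\in\Fix(g)\cap\mathcal{R}_k$ that is a density-one point of $\Fix(g)\cap\mathcal{R}_k$ and has $\Tang(X,p)=\{(\mathbb{R}^k,d_E,\mathcal{L}^k,0)\}$. Blowing up at $p$ with a rescaling $\lambda_n\to\infty$, the isometries $g$ of $(X,\lambda_n d)$ fix $p$ and are $1$-Lipschitz, so a subsequence converges to an isometry $\bar g$ of $\mathbb{R}^k$ fixing $0$, i.e.\ $\bar g\in O(k)$; moreover $\Fix(g)$ blows up into the closed set $\Fix(\bar g)$, which is a linear subspace. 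Because $p$ has density one, this limit set must carry full $\mathcal{L}^k$-measure, forcing $\Fix(\bar g)=\mathbb{R}^k$ and hence $\bar g=\mathrm{id}$. Thus $g$ is infinitesimally trivial at $\mathfrak{m}$-a.e.\ fixed point.

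To turn this infinitesimal triviality into something global I would invoke essential non-branching. For small balls around two density points $p,q\in\Fix(g)$, set $\mu_0,\mu_1$ to be the normalized restrictions of $\mathfrak{m}$ to $\Fix(g)\cap B_p(r)$ and $\Fix(g)\cap B_q(s)$; these are absolutely continuous and, since $g$ fixes $\Fix(g)$ pointwise and preserves the balls, they are $g$-invariant. By Theorem \ref{teo.essnonbranch} there is a unique $\pi\in\OptGeo(\mu_0,\mu_1)$, induced by a map and concentrated on a non-branching set where $e_0$ is injective. As $g$ is an isometry, $g_{\#}\pi$ is again an optimal dynamical plan between $g_{\#}\mu_0=\mu_0$ and $g_{\#}\mu_1=\mu_1$, so uniqueness gives $g_{\#}\pi=\pi$; since $g$ fixes the endpoints and the carrier is non-branching with $e_0$ injective, $\gamma$ and $g\gamma$ share their starting point and therefore coincide for $\pi$-a.e.\ $\gamma$. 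Hence the connecting geodesics remain in $\Fix(g)$, the intermediate measures $\mu_t=(e_t)_{\#}\pi$ are absolutely continuous and supported in $\Fix(g)$, so $\Fix(g)$ is essentially geodesically convex.

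Finally I would combine these two facts. Geodesic convexity together with positive measure should force $\Fix(g)$ to have nonempty interior, mirroring the Euclidean statement that a convex set of positive Lebesgue measure has interior points: at the density point $p$ the blow-up of $\Fix(g)$ is a convex set of full $\mathcal{L}^k$-measure, hence all of $\mathbb{R}^k$, and this fullness should pull back to an honest open set fixed by $g$. An isometry of a geodesic, essentially non-branching space fixing a nonempty open set must be the identity: an initial fixed segment of a geodesic, propagated through the non-branching structure, forces the whole geodesic to be fixed, hence $g=\mathrm{id}$, contradicting $g\neq\mathrm{id}$. I expect the genuine obstacle to be exactly this last step, namely upgrading the measure-theoretic and infinitesimal triviality to \emph{local} triviality and then to global triviality. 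Individual geodesics in an $RCD^*(K,N)$ space may branch and $\mathfrak{m}|_{\Fix(g)}$ is blind to the topological boundary, so the "nonempty interior" and "fixes an open set $\Rightarrow$ identity" implications will likely have to be established through a further optimal-transport/non-branching argument rather than naive geodesic continuation.
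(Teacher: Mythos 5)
There is a genuine gap, and it is exactly the one you flag yourself: nothing in your argument ever produces a contradiction. The blow-up step only yields $\bar g=\mathrm{id}$ on the tangent cone, which is not contradictory --- in the $RCD^*$ setting there is no exponential map, so ``infinitesimally trivial at a fixed point'' does not upgrade to ``trivial near the point'' (and the claim that $\Fix(g)$ blows up \emph{onto} $\Fix(\bar g)$ with full measure itself needs a careful argument). The final implication ``positive measure $+$ essential geodesic convexity $\Rightarrow$ nonempty interior $\Rightarrow$ $g=\mathrm{id}$'' is precisely what you cannot establish with the tools at hand, so the proposal as written does not prove the proposition. Your intermediate step is, however, correct and in the right spirit: if $\mu_0,\mu_1\ll\mathfrak m$ are concentrated on $\Fix(g)$, then $g_{\#}\pi=\pi$ for the unique $\pi\in\OptGeo(\mu_0,\mu_1)$, and since $e_0$ is injective on the carrier and $(g\circ\gamma)_0=\gamma_0$ for $\pi$-a.e.\ $\gamma$, one gets $g\circ\gamma=\gamma$ $\pi$-a.e.

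The ingredient you are missing is to play the hypothesis $g\neq\mathrm{id}$ against the ``induced by a map'' clause of Theorem \ref{teo.essnonbranch}; this is how \cite{GuijSan} and \cite{Sosa} conclude, and it makes the blow-up, the convexity discussion, and the interior-point step unnecessary. Pick $y$ with $gy\neq y$ and a bounded $A\subset\Fix(g)$ with $0<\mathfrak m(A)<\infty$, and set $\mu_0:=\mathfrak m(A)^{-1}\mathfrak m|_A$ and $\nu:=\tfrac12(\delta_y+\delta_{gy})$. For $\mu_0$-a.e.\ $x$ one has $d(x,y)=d(gx,gy)=d(x,gy)$, so \emph{every} coupling of $\mu_0$ with $\nu$ has cost $\int_X d^2(x,y)\,d\mu_0(x)$ and is therefore optimal; in particular the coupling $\tfrac12\bigl((\mathrm{id},y)_{\#}\mu_0+(\mathrm{id},gy)_{\#}\mu_0\bigr)$, which splits the mass of each $x$ evenly between the two distinct points $y$ and $gy$, is optimal but not induced by a map. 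This contradicts the uniqueness and map-representability of the optimal transport from an absolutely continuous first marginal guaranteed by Theorem \ref{teo.essnonbranch}. If you prefer to keep your convexity step, it still does not suffice on its own: essential geodesic convexity of a positive-measure set does not yield an interior point in a general $RCD^*(K,N)$ space, and ``fixes an open set $\Rightarrow$ identity'' would again have to be proved by an optimal-transport argument of exactly the above type.
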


The following was obtained in \cite{GuijSan}, and independently by Sosa \cite{Sosa}.

\begin{teo}{\bf (Lie group structure)}
Let $(X,d,\mathfrak{m})$ be an $RCD^*(K,N)$ space.
Then the  isometry group, $\Iso(X),$ is a Lie group.
\end{teo}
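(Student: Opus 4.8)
The plan is to reduce the statement to the classical solution of Hilbert's fifth problem for transformation groups, in the form due to Montgomery and Zippin: a locally compact topological group acting effectively and continuously on a connected, locally connected, locally compact, finite-dimensional metric space is a Lie group. Equipped with the compact-open topology, $\Iso(X)$ is a topological group acting continuously and effectively on $(X,d)$, so the task splits into verifying the metric hypotheses on $X$ and establishing the local compactness of $\Iso(X)$.

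For the space $X$ itself, every required property is already at hand. By Theorem~\ref{teo.atoms} the space is proper, hence locally compact, and its Hausdorff dimension is at most $N$; since $(X,d)$ is complete and separable this yields $\dim_T X \leq \dim_{\mathcal{H}} X \leq N < \infty$, so $X$ is finite-dimensional. Finally, $(X,d)$ is geodesic, and I would use this to obtain local connectedness: for any $p$ and $r>0$, each $x \in B_p(r)$ is joined to $p$ by a constant-speed geodesic $\gamma$ with $d(p,\gamma_t) = t\,d(p,x) \leq d(p,x) < r$ for all $t \in [0,1]$, so $\gamma$ stays inside $B_p(r)$. Thus every metric ball is path-connected, and $X$ is connected and locally (path-)connected.

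The remaining point is the local compactness of $G := \Iso(X)$, where I would invoke the Arzel\`a--Ascoli theorem. Isometries are $1$-Lipschitz, hence equicontinuous, and for each $R>0$ the set $\{ g \in \Iso(X) : d(gp,p) \leq R \}$ is pointwise bounded, since $d(gx,p) \leq d(gx,gp)+d(gp,p) = d(x,p)+R$ for every $x$; as $X$ is proper, Arzel\`a--Ascoli shows this set is relatively compact, and it is closed because uniform limits of isometries are isometries. These sets have nonempty interior and exhaust $G$, so every point of $G$ has a compact neighbourhood and $G$ is locally compact. With $G$ locally compact and $X$ connected, locally connected, locally compact and finite-dimensional, the Montgomery--Zippin theorem applies and gives that $\Iso(X)$ is a Lie group.

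The main obstacle is pinning down the precise form of the Montgomery--Zippin theorem and checking its hypotheses in exactly the generality required; the feature that separates $RCD^*(K,N)$ spaces from arbitrary proper geodesic spaces is the finiteness of the topological dimension, guaranteed by $N<\infty$ through Theorem~\ref{teo.atoms}. Equivalently, one could bypass the transformation-group statement and instead show directly, via the Gleason--Yamabe structure theorem, that the locally compact group $\Iso(X)$ has no small subgroups; here the effectiveness of the action on the connected, locally connected, finite-dimensional space $X$ is what rules out small subgroups, while Proposition~\ref{prop.fix} reinforces effectiveness by ensuring that every nontrivial isometry displaces a set of positive $\mathfrak{m}$-measure.
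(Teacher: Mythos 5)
The reduction you propose does not go through: there is no Montgomery--Zippin theorem asserting that a locally compact group acting effectively and continuously on a connected, locally connected, locally compact, finite-dimensional metric space is a Lie group. The classical results require either that the underlying space be a topological manifold or that the action be transitive (in which case the space is forced to be a manifold); for general finite-dimensional spaces the conclusion is simply false. A concrete counterexample already lives among compact geodesic spaces: take a chain of round spheres of radii $2^{-n}$ in $\mathbb{R}^3$, centered on a common axis and consecutively tangent, equipped with the induced length metric. This space is compact, connected, locally connected and two-dimensional, yet its isometry group contains the closed subgroup $\prod_{n}SO(2)$ (independent rotations of each sphere about the common axis), a compact group with arbitrarily small nontrivial subgroups, hence not a Lie group. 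Your fallback remark --- that effectiveness of the action on a connected, locally connected, finite-dimensional space ``rules out small subgroups'' --- fails for the same reason: the action in this example is effective. The entire content of the theorem is concentrated in the no-small-subgroups property, and that is exactly the step your argument does not supply.

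Everything you do verify (properness and finite topological dimension of $X$ from Theorem \ref{teo.atoms}, connectedness and local connectedness of balls in a geodesic space, local compactness of $\Iso(X)$ via Arzel\`a--Ascoli) is correct and is indeed the routine first half of the argument in \cite{GuijSan} and \cite{Sosa}, the references to which the paper attributes this theorem. But the decisive step in both of those proofs is a genuinely $RCD^*(K,N)$-specific argument excluding small subgroups: after reducing via Gleason--Yamabe to showing that no neighbourhood of the identity contains a nontrivial subgroup, one derives a contradiction from the structure theory --- $\mathfrak{m}$-a.e.\ point is $k$-regular with Euclidean tangent (Theorem \ref{teo.rect}), and fixed-point sets of nontrivial isometries are $\mathfrak{m}$-negligible (Proposition \ref{prop.fix}), used in a quantitative form near regular points to show that a nontrivial isometry uniformly close to the identity cannot exist inside a compact subgroup. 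Without an argument of this kind, which uses the lower Ricci curvature bound and not merely properness and finite dimensionality, the proof is incomplete.
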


Later on, Galaz-Garc\'ia, Kell, Mondino and Sosa studied Lie group actions on $RCD^*(K,N)$ spaces by measure preserving isometries. Amongst other results they obtained the following:

\begin{teo}{\bf (Quotients \cite{GalKellMonSosa}).}\label{teo.quotient}
Let $(X,d,\mathfrak{m})$ be an $RCD^*(K,N)$ space and $G\leq \Iso(X)$ a compact subgroup of measure preserving isometries. Then the quotient space $(X/G,d^*,\mathfrak{m}^*)$ is also an $RCD^*(K,N)$ space.  
\end{teo}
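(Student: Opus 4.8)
The plan is to verify directly that $(X/G, d^*, \mathfrak{m}^*)$ satisfies the entropic condition $CD^e(K,N)$ together with infinitesimal Hilbertianity, which by Theorem $7$ of \cite{EKS} is equivalent to $RCD^*(K,N)$. Write $\pi\colon X\to X/G$ for the projection, $d^*([x],[y])=\min_{g\in G}d(x,gy)$ for the quotient metric, and $\mathfrak{m}^*=\pi_\#\mathfrak{m}$. Standard facts about quotients by compact isometric group actions give that $(X/G,d^*)$ is complete, separable and geodesic and that $\mathfrak{m}^*$ is Radon; moreover $\pi$ is $1$-Lipschitz and a submetry.

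The core of the argument is a correspondence between measures upstairs and downstairs. Given $\mu^*=\rho^*\mathfrak{m}^*\in\mathbb{P}_2(X/G)$, define its $G$-invariant lift $\mu:=(\rho^*\circ\pi)\,\mathfrak{m}$. Since $\mathfrak{m}$ is $G$-invariant (the action is measure preserving) one checks that $\pi_\#\mu=\mu^*$, that $\mu$ is $G$-invariant, and crucially that $\Ent_{\mathfrak{m}^*}(\mu^*)=\Ent_{\mathfrak{m}}(\mu)$, because the density $\rho^*\circ\pi$ is constant along orbits. First I would prove the distance identity $\mathbb{W}_2(\mu_0^*,\mu_1^*)=\mathbb{W}_2(\mu_0,\mu_1)$ for invariant lifts: the inequality $\le$ follows because $\pi$ is $1$-Lipschitz, so pushing any coupling down does not increase the cost, while the reverse requires lifting an optimal coupling downstairs to a coupling of $\mu_0,\mu_1$ realizing the same cost, which is where the submetry property and a measurable selection of distance-minimizing group elements enter.

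With this in hand I would transfer the convexity inequality from Definition \ref{def.entropiccurv}. Given $\mu_0^*,\mu_1^*\in D(\Ent_{\mathfrak{m}^*})$, lift to $\mu_0,\mu_1\in D(\Ent_{\mathfrak{m}})$ and apply $CD^e(K,N)$ upstairs to obtain an entropy-convex geodesic $(\mu_t)$. A key simplification is that by Theorem \ref{teo.essnonbranch} the $\mathbb{W}_2$-geodesic between $\mu_0\ll\mathfrak{m}$ and $\mu_1$ is unique; since each $g\in G$ is a measure-preserving isometry, $(g_\#\mu_t)$ is also such a geodesic with the same endpoints, so $g_\#\mu_t=\mu_t$ and the geodesic is automatically $G$-invariant. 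Hence each $\mu_t$ descends to $\mu_t^*:=\pi_\#\mu_t$ with $\Ent_{\mathfrak{m}^*}(\mu_t^*)=\Ent_{\mathfrak{m}}(\mu_t)$, and the distance identity of the previous step shows $(\mu_t^*)$ is a $\mathbb{W}_2$-geodesic in $\mathbb{P}_2(X/G)$. Substituting the equalities of entropies and Wasserstein distances into inequality \eqref{eq.entropyKNconvex} upstairs yields exactly the corresponding inequality for $(\mu_t^*)$, establishing $CD^e(K,N)$ for the quotient.

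It remains to check that $W^{1,2}(\mathfrak{m}^*)$ is Hilbert. I would identify $f\in W^{1,2}(X/G,\mathfrak{m}^*)$ with the $G$-invariant function $f\circ\pi\in W^{1,2}(X,\mathfrak{m})$, matching the minimal weak upper gradients and hence the Cheeger energies; the pointwise parallelogram law \eqref{eq.parallelpoint} upstairs, applied to $f\circ\pi$ and $h\circ\pi$, then descends to the quotient, giving infinitesimal Hilbertianity. The main obstacle I anticipate is the distance-matching step, i.e.\ the measurable lift of optimal couplings across the submetry $\pi$, together with the careful identification of the invariant Sobolev space; the automatic $G$-invariance of Wasserstein geodesics coming from their uniqueness is what makes the transfer of convexity clean.
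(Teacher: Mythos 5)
The paper does not prove this statement; it is quoted verbatim from \cite{GalKellMonSosa}, and your outline is essentially the strategy of the proof given there: identify $\mathbb{P}_2(X/G)$ isometrically with the $G$-invariant measures in $\mathbb{P}_2(X)$ via the invariant lift $\mu=(\rho^*\circ\pi)\,\mathfrak{m}$, match entropies, use uniqueness of Wasserstein geodesics from Theorem \ref{teo.essnonbranch} to get automatic $G$-invariance, and descend both the $(K,N)$-convexity and the pointwise parallelogram law through the identification of $W^{1,2}(\mathfrak{m}^*)$ with the invariant part of $W^{1,2}(\mathfrak{m})$. Your plan is sound, and you correctly isolate the two steps that carry the real technical weight (lifting optimal couplings across the submetry with the right marginals, and matching minimal weak upper gradients $|\nabla(f\circ\pi)|_w=|\nabla f|_w\circ\pi$), both of which are carried out in \cite{GalKellMonSosa}.
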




\begin{teo}{\bf (Principal orbit theorem \cite{GalKellMonSosa} ) }\label{teo.prinorbit}
Let $(X,d,\mathfrak{m})$ be an $RCD^*(K,N)$ space and $G \leq \Iso(X)$ a compact group of measure preserving isometries. Fix an orbit $G\cdot x \in X/G.$ Then for $\mathfrak{m}-$a.e. $y \in X$ there exists a unique $\bar{x}\in G\cdot x$ and a unique geodesic connecting $y$ to $\bar{x}$ such that $d(x,y)= d^*(G\cdot x, G\cdot y).$ In particular,  $G_y \leq G_{\bar{x}}$ and there exists a unique (up to conjugation) subgroup $G_{\min}\leq G$ such that for $\mathfrak{m}-$a.e. $y\in X$ the orbit $G\cdot y$ is homeomorphic to $G/G_{\min}.$ We will call such orbits principal. 
\end{teo}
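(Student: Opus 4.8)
The plan is to pass to the quotient, where Theorem~\ref{teo.quotient} tells us that $(X/G, d^*, \mathfrak{m}^*)$ is again an $RCD^*(K,N)$ space, and then to transfer the desired almost-everywhere statements back up through the projection $\pi \colon X \to X/G$. First I would record that $\pi$ is a submetry: since $G$ acts by isometries and the orbits $G\cdot x$ are compact (compactness of $G$), one has $d^*(G\cdot a, G\cdot b) = \min_{g\in G} d(a, gb)$, with the infimum attained, and $\mathfrak{m}^* = \pi_{\#}\mathfrak{m}$. In particular the function $\rho(y) := d(y, G\cdot x)$ equals $d^*(\pi(y),[x])$, the closest points $\bar x \in G\cdot x$ to $y$ are exactly those realizing $\rho(y)$, and every minimal geodesic from $y$ to such a $\bar x$ projects to a minimal geodesic from $\pi(y)$ to $[x]$ in $X/G$.

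Next I would establish a.e. uniqueness downstairs and lift it. Since $X/G$ is $RCD^*(K,N)$ it is essentially non-branching (Theorem~\ref{teo.essnonbranch}), and for the fixed base point $[x]$ the set of points $[y]$ joined to $[x]$ by more than one minimal geodesic is $\mathfrak{m}^*$-negligible; because $\mathfrak{m}^* = \pi_{\#}\mathfrak{m}$, its $\pi$-preimage is $\mathfrak{m}$-negligible, so for $\mathfrak{m}$-a.e. $y$ the quotient geodesic $\pi(y)\to[x]$ is unique. Combining this with the submetry property and essential non-branching of $X$ itself, I would argue that for $\mathfrak{m}$-a.e. $y$ the minimal geodesic from $y$ to the orbit --- a horizontal lift, with fixed initial point $y$, of the now-unique quotient geodesic --- is unique, and hence terminates at a single closest point $\bar x$. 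Equivalently, $\rho$ is a locally Lipschitz function whose a.e. differentiability (local doubling from Theorem~\ref{teo.atoms} together with the Poincar\'e inequality available on $RCD^*(K,N)$ spaces, via Cheeger) forces the nearest-point projection to the orbit to be single-valued off an $\mathfrak{m}$-null set.

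The isotropy inclusion is then immediate from uniqueness: if $h \in G_y$, then $h$ carries the unique minimal geodesic from $y$ to $\bar x$ to a minimal geodesic from $hy = y$ to $h\bar x$, and $h\bar x$ again lies on $G\cdot x$ with $d(y, h\bar x) = d(y,\bar x) = \rho(y)$; uniqueness of the closest point forces $h\bar x = \bar x$, so $G_y \le G_{\bar x} = g G_x g^{-1}$ when $\bar x = gx$. To extract the principal isotropy type, note that the previous step shows that, for every fixed orbit and $\mathfrak{m}$-a.e. $y$, the compact group $G_y$ is subconjugate to $G_x$. Let $m$ be the essential infimum of $y \mapsto \dim G_y$ and pick $x_0$ with $\dim G_{x_0} = m$; applying the subconjugacy with base orbit $G\cdot x_0$ gives $\dim G_y \le m$ for a.e. $y$, whence $\dim G_y = m$ and $(G_y)^0$ is conjugate to $(G_{x_0})^0$ for $\mathfrak{m}$-a.e. $y$. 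Setting $G_{\min} := G_{x_0}$, a further argument promoting this to conjugacy of the full groups then yields $G\cdot y \cong G/G_{\min}$ for $\mathfrak{m}$-a.e. $y$.

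The hard part will be the lifting step and this final promotion. Controlling horizontal lifts at a fixed initial point is delicate in the non-smooth setting, where there is no exponential map and ``horizontal'' is only a metric notion; the cleanest route is to phrase uniqueness of the nearest-point projection through the a.e. differentiability of $\rho$ and then invoke essential non-branching of $X$ to rule out branching of the lifted geodesic. Likewise, upgrading ``$(G_y)^0$ conjugate to $(G_{x_0})^0$'' to conjugacy of the full isotropy groups, and verifying that orbits of non-maximal dimension form an $\mathfrak{m}$-null set, requires a local slice-type analysis near regular points of $X/G$ rather than the purely measure-theoretic bookkeeping that suffices for the dimension count.
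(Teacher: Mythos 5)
Your proposal diverges from the paper's argument at the crucial step, and the divergence is exactly where your proof has a gap that you yourself flag. You try to prove a.e.\ uniqueness of the minimal geodesic \emph{downstairs} in $X/G$ and then lift it, but even granting uniqueness of the quotient geodesic from $\pi(y)$ to $[x]$, this does not give uniqueness of the foot point $\bar x \in G\cdot x$: two distinct closest points $\bar x_1 \neq \bar x_2$ in the orbit, joined to $y$ by two distinct minimal geodesics, would both project to the \emph{same} quotient geodesic, so no contradiction with downstairs uniqueness arises. Your fallback — a.e.\ differentiability of $\rho(y)=d(y,G\cdot x)$ forcing the nearest-point projection to be single-valued — is not a valid inference in a metric measure space; differentiability of a distance function at a point does not by itself rule out multiple nearest points without a first-variation formula, which is not available here. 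Essential non-branching (Theorem~\ref{teo.essnonbranch}) also does not apply directly, since it is a statement about optimal transports between measures, not about geodesics from a fixed point to a.e.\ point. The paper avoids all of this by working entirely upstairs: it observes that $\varphi(y) = d(G\cdot x, y)^2$ is the lift of a $c$-concave function on $X/G$ and hence $c$-concave on $X$ (Lemma~\ref{lem.liftc}), that any nearest point $\bar x$ obtained by lifting a quotient geodesic lies in $\partial^c\varphi(y)$, and then invokes Lemma~\ref{lem.superdiff}: in an $RCD^*(K,N)$ space the $c$-superdifferential of any $c$-concave function is, for $\mathfrak{m}$-a.e.\ point, at most a singleton with a unique connecting geodesic. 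This yields both uniqueness claims in one stroke, with no lifting argument needed; the inclusion $G_y \le G_{\bar x}$ then follows from the $G_y$-invariance of $\partial^c\varphi(y)$, exactly as in your isotropy step.

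Your final paragraph also leaves the extraction of the principal isotropy type incomplete: you obtain conjugacy only of the identity components $(G_y)^0$ and defer the promotion to conjugacy of the full groups to an unspecified slice-type analysis. The paper's route is to choose $x_0$ with minimal isotropy group and apply the (already established) subconjugacy $G_y \le gG_{x_0}g^{-1}$ for $\mathfrak{m}$-a.e.\ $y$ with base orbit $G\cdot x_0$; minimality then forces equality, giving $G\cdot y \cong G/G_{x_0}$ directly, with no passage through identity components or dimension counts. To repair your write-up you should replace the quotient-and-lift step by the $c$-superdifferential argument (or supply an independent proof that the nearest-point projection onto a fixed orbit is a.e.\ single-valued), and replace the dimension bookkeeping by the minimal-isotropy argument.
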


\begin{rem}\label{rem.prinorbit}
Actually this theorem may be extended to closed subgroups $G$ of $\Iso(X)$  that don't necessarily preserve the measure.  By \cite{Pal} we know that the action is proper.  Then the isotropy groups are always compact, the quotient space $X/G$ is geodesic and we can  lift geodesics (see \cite{GuijSan} for details). This allows us to  follow the same argument as in \cite{GalKellMonSosa} and obtain the result. 
\end{rem}

First we recall the definitions and results used in the proof.
\begin{defi}{\bf ($c-$transform)}
Let $\psi: X \rightarrow \mathbb{R}\cup\lbrace \pm \infty \rbrace$ be a function. We define its $c-$transform, $\psi^{c}: X \rightarrow \mathbb{R}\cup\lbrace - \infty \rbrace$  , as 
$$\psi^c (x) := \inf_{y \in X} \lbrace d(x,y)^2-\psi(y) \rbrace.  $$
\end{defi}

We will say that a function $\varphi$ is $c-$concave if it is the $c-$transform of some other function $\psi,$ i.e. $\varphi = \psi^c$

\begin{defi}{\bf ($c-$superdifferential)}
Let $\varphi : X \rightarrow \mathbb{R}\cup \lbrace -\infty \rbrace$ be a $c-$concave function.  We define its $c-$superdifferential

$$\partial^c \varphi := \lbrace (x,y)\in X\times X \,|\, \varphi (x)+\varphi^c (y) = d(x,y)^2 \rbrace.  $$
For $x \in X$ define

$$\partial^c\varphi (x ) := \lbrace y \in X \,|\, (x,y)\in \partial^c \varphi \rbrace, $$ or equivalently  $y \in \partial^c \varphi (x) $ if and only if  
\begin{equation}\label{eq.equivsuperdiff}
\varphi (x) -d(x,y)^2 \geq \varphi (z) -d(x,z)^2, \quad  \forall z \in X.
\end{equation}
\end{defi}

\begin{lem}{\bf (\cite{GalKellMonSosa})}\label{lem.liftc}
The lift of a $c-$concave function is $c-$concave. That is, if $\varphi : X/G \rightarrow \mathbb{R}\cup \lbrace -\infty \rbrace$ is a $c-$concave function on $X/G$ then $\hat{\varphi}:X\rightarrow \mathbb{R}\cup\lbrace -\infty \rbrace $ defined by $\hat{\varphi}(x) := \varphi (G\cdot x)$ is $c-$concave in $X.$
\end{lem}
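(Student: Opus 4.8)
The plan is to lift the dual potential. Since $\varphi$ is $c$-concave on $X/G$, there is a function $\psi : X/G \to \mathbb{R}\cup\{\pm\infty\}$ with $\varphi = \psi^c$, that is
$$\varphi(\bar x) = \inf_{\bar y \in X/G}\bigl\{ d^*(\bar x, \bar y)^2 - \psi(\bar y)\bigr\}.$$
I would then define the lift $\hat\psi : X \to \mathbb{R}\cup\{\pm\infty\}$ by $\hat\psi(y) := \psi(G\cdot y)$ and claim that $\hat\varphi = \hat\psi^{\,c}$, where the $c$-transform is now computed in $X$. Establishing this identity proves the lemma, since it exhibits $\hat\varphi$ as the $c$-transform of a function on $X$.

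To verify the claim I would decompose the infimum defining $\hat\psi^{\,c}$ according to the partition of $X$ into $G$-orbits. Because $\hat\psi(y) = \psi(G\cdot y)$ depends only on the orbit of $y$, for each fixed orbit $\bar y$ the value $\psi(\bar y)$ is constant in $y$ and factors out of the inner infimum, giving
$$\hat\psi^{\,c}(x) = \inf_{y\in X}\bigl\{ d(x,y)^2 - \psi(G\cdot y)\bigr\} = \inf_{\bar y \in X/G}\Bigl\{\bigl(\inf_{y\in \bar y} d(x,y)^2\bigr) - \psi(\bar y)\Bigr\}.$$
The remaining ingredient is the compatibility between the ambient and quotient metrics: since $G$ acts by isometries, $d^*(G\cdot x, \bar y) = \inf_{y \in \bar y} d(x,y)$, and as $t\mapsto t^2$ is monotone on $[0,\infty)$ one has $\inf_{y\in\bar y} d(x,y)^2 = d^*(G\cdot x, \bar y)^2$. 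Substituting this back yields
$$\hat\psi^{\,c}(x) = \inf_{\bar y \in X/G}\bigl\{ d^*(G\cdot x, \bar y)^2 - \psi(\bar y)\bigr\} = \varphi(G\cdot x) = \hat\varphi(x),$$
which is exactly the desired identity.

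I expect no genuine obstacle here; the argument is essentially bookkeeping with infima, and the two points deserving a word of care are both routine. The first is the interchange of infima, which is merely the observation that infimizing over $X$ is the same as infimizing within each orbit and then over the set of orbits, valid for any partition. The second is the extended-real arithmetic when $\psi(\bar y) = \pm\infty$, handled by the usual conventions $a - (+\infty) = -\infty$ and $a - (-\infty) = +\infty$ for finite $a$, under which the factoring of $\psi(\bar y)$ out of the inner infimum remains consistent. The heart of the matter is thus the defining property $\inf_{y\in\bar y} d(x,y) = d^*(G\cdot x,\bar y)$ of the quotient metric, on which the whole computation rests.
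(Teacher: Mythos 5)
Your proof is correct. Note that the paper itself does not prove this lemma --- it is quoted directly from \cite{GalKellMonSosa} --- and your argument (lifting the dual potential $\psi$ to $\hat\psi(y):=\psi(G\cdot y)$, splitting the infimum over orbits, and invoking the identity $d^*(G\cdot x,\bar y)=\inf_{y\in\bar y}d(x,y)$ together with the monotonicity of $t\mapsto t^2$ to conclude $\hat\psi^{\,c}=\hat\varphi$) is exactly the standard proof given in that reference, with the extended-real conventions handled appropriately.
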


\begin{lem}{\bf (\cite{GalKellMonSosa})}\label{lem.superdiff}
Let $(X,d,\mathfrak{m})$ be an $RCD^*(K,N)$ space then for $\mathfrak{m}-$a.e. point $x\in X$ and every $c-$concave function $\varphi,$ the $c-$superdifferential contains at most one point. In particular for any $c-$concave function $\varphi$ and $\mathfrak{m}-$a.e. $x\in X$ there exists a unique geodesic connecting $x$ and $\partial^c\varphi (x),$ whenever  the set $\partial^c\varphi (x)$ is not empty. 
\end{lem}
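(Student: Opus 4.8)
The plan is to derive the single-valuedness of $\partial^c\varphi$ directly from the essential non-branching of $RCD^*(K,N)$ spaces, as encoded in Theorem~\ref{teo.essnonbranch}, bypassing any explicit differentiability argument. First I would record two standard facts about the cost $c(x,y)=d(x,y)^2$: a $c$-concave function $\varphi=\psi^c$ is an infimum of the functions $x\mapsto d(x,y)^2-\psi(y)$, each locally Lipschitz in $x$ with a uniform local constant on bounded sets, so $\varphi$ is locally Lipschitz and in particular continuous on its finiteness domain; and its $c$-superdifferential $\partial^c\varphi=\{(x,y):\varphi(x)+\varphi^c(y)=d(x,y)^2\}$ is a closed, $c$-cyclically monotone subset of $X\times X$. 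Closedness together with properness of $X$ (Theorem~\ref{teo.atoms}) makes $x\mapsto\partial^c\varphi(x)$ a measurable multifunction with closed values, so a Kuratowski--Ryll-Nardzewski type selection theorem furnishes Borel selectors whenever I need them.

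Next I set $A:=\{x\in X:\partial^c\varphi(x)\text{ contains at least two points}\}$ and argue by contradiction, assuming $\mathfrak{m}(A)>0$. Intersecting with a large ball and using that $\mathfrak m$ is Radon and locally doubling (Theorem~\ref{teo.atoms}), I may assume $A$ is Borel and bounded with $0<\mathfrak m(A)<\infty$, and that the selected images are bounded as well. I then choose Borel maps $S,T\colon A\to X$ with $S(x),T(x)\in\partial^c\varphi(x)$ and $S(x)\neq T(x)$ for every $x\in A$, put $\mu_0:=\mathfrak m(A)^{-1}\,\mathfrak m|_A$ (so $\mu_0\ll\mathfrak m$ and $\mu_0\in\mathbb P_2(X)$ by boundedness), and consider the coupling
\[
\pi:=\tfrac12(\mathrm{id},S)_{\#}\mu_0+\tfrac12(\mathrm{id},T)_{\#}\mu_0 .
\]
By construction $\pi$ is concentrated on $\partial^c\varphi$, which is $c$-cyclically monotone; since $d^2$ is continuous and finite on the bounded set $\supp\pi$, this forces $\pi$ to be an optimal coupling between $\mu_0$ and $p_{2\#}\pi$.

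The contradiction is then immediate. Because $\mu_0\ll\mathfrak m$, Theorem~\ref{teo.essnonbranch} asserts that the optimal coupling is unique and induced by a map; but $\pi$ cannot be induced by a map, as $S\neq T$ on the positive-measure set $A$. Hence $\mathfrak m(A)=0$, i.e. $\partial^c\varphi(x)$ is at most a singleton for $\mathfrak m$-a.e.\ $x$. For the final assertion I fix, on the Borel set where the superdifferential is nonempty, the a.e.-defined Borel map $x\mapsto T(x)$ with $\{T(x)\}=\partial^c\varphi(x)$, and transport $\mu_0\ll\mathfrak m$ to $T_{\#}\mu_0$; any measurable choice of geodesics $x\mapsto\gamma^x$ from $x$ to $T(x)$ yields an element of $\OptGeo(\mu_0,T_{\#}\mu_0)$ lying over this optimal coupling, and the uniqueness in Theorem~\ref{teo.essnonbranch} forces all such choices to agree, so the connecting geodesic is $\mathfrak m$-a.e.\ unique.

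I expect the main obstacle to be measure-theoretic bookkeeping rather than geometric input: verifying that $\partial^c\varphi$ is Borel with closed sections so that the selections $S,T$ genuinely exist, and checking the standard but slightly delicate implication ``concentrated on a $c$-cyclically monotone set $\Rightarrow$ optimal'' in the present non-compact, infinite-mass setting — which is exactly where boundedness of $A$ and of its images, together with finiteness of the cost there, are used. Once these are secured, the geometric core of the argument is supplied entirely by the essential non-branching of Theorem~\ref{teo.essnonbranch}.
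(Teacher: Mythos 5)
The paper does not prove this lemma at all: it is imported verbatim from \cite{GalKellMonSosa}, so there is no in-paper argument to compare against. Your proposal is nonetheless a correct and essentially self-contained derivation, and it follows the standard route used in the literature (e.g.\ in \cite{GalKellMonSosa} and in Gigli--Rajala--Sturm): if the set $A$ where $\partial^c\varphi$ is multivalued had positive measure, two distinct Borel selections $S,T$ on a bounded piece of $A$ would produce the plan $\pi=\tfrac12(\mathrm{id},S)_{\#}\mu_0+\tfrac12(\mathrm{id},T)_{\#}\mu_0$, concentrated on the $c$-cyclically monotone set $\partial^c\varphi$ with bounded (hence integrable) cost, so optimal, yet not induced by a map --- contradicting Theorem~\ref{teo.essnonbranch}. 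The only points that deserve explicit care are the ones you already flag: (i) $\partial^c\varphi$ is closed because $\varphi$ and $\varphi^c$ are upper semicontinuous and $\varphi(x)+\varphi^c(y)\le d^2(x,y)$ always, which together with properness of $X$ (Theorem~\ref{teo.atoms}) gives measurability of the multifunction and, after decomposing $A$ by a countable basis of separated balls, two selections with $S\neq T$ everywhere (the set $A$ itself is a priori only analytic, but that suffices since $\mathfrak m$ is Radon); (ii) the implication ``cyclically monotone $\Rightarrow$ optimal'' needs the integrability hypothesis, which your restriction to bounded sets supplies; (iii) Theorem~\ref{teo.essnonbranch} is stated for dynamical plans, so you should lift your static coupling to an element of $\OptGeo(\mu_0,p_{2\#}\pi)$ by a measurable selection of geodesics before invoking uniqueness --- the same lifting also gives the a.e.\ uniqueness of the connecting geodesic in the final assertion, exactly as you describe. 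With those routine points filled in, the argument is complete.
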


Now we prove the Principal orbit theorem.
\begin{proof}[Proof of Remark 2.22]
Fix an orbit $G\cdot x,$  and define the function $\varphi: X \rightarrow \mathbb{R},$  $\varphi (y)= d^*  (G\cdot x, G\cdot y)^2 = d(G\cdot x, y)^2. $ By Lemma \ref{lem.liftc}  it is clear that this function is $c-$concave. 

By lifting a geodesic in $X/G$ we can find a point $\bar{x}\in G\cdot x$ such that it realizes $d(\bar{x},y)= d^*(G\cdot x, G \cdot y).$ Using the characterization of the $c-$superdifferential given by (\ref{eq.equivsuperdiff}) it is easy to check that $\bar{x} \in \partial^c \varphi (y).$ 
 
 Now, from Lemma \ref{lem.superdiff}   we have that for $\mathfrak{m}-$a.e. $y \in X$ the point $\bar{x}$ we previously obtained is unique and moreover that the geodesic connecting them is also unique.

Notice that $\partial^c \varphi (y)$ is $G_y-$invariant,  therefore  it follows that  from the $\mathfrak{m}-$a.e. uniqueness of the $c-$superdifferential that $G_y \leq G_{\bar{x}}.$ 

Finally take a point $x_0$ with minimal isotropy group $G_{x_0}.$ Then for $\mathfrak{m}-$a.e. point $y \in X,$  $G_y = gG_{x_0}g^{-1}.$ We conclude then that for $\mathfrak{m}-$a.e. $y \in X$ 
$$G\cdot x_0  \simeq G/G_{x_0}\simeq G/G_{y} \simeq G\cdot y,$$ are homeomorphic.
\end{proof}
 
\section{The measure $\mathfrak{m}$ is quasi-invariant.}

Recall that a measure $\mathfrak{m}$ is \emph{quasi-invariant} with respect to $\Iso(X)$ if for every $g\in \Iso(X) $ the measures $g_{\#}\mathfrak{m}$ and $\mathfrak{m}$ are mutually absolutely continuous. 

Let $(X,d,\mathfrak{m})$ be an $RCD^*(K,N)$ space for some $K \in \mathbb{R},$ $N \in [1, \infty),$ and take  $g \in \Iso(X).$  Our goal for this section will be to prove that the m.m.s. $(X,d,g_{\#}\mathfrak{m})$ is both infinitesimally Hilbertian and $CD^e(K,N).$ We begin with the observation that 
$\Iso(X)$ acts isometrically on $(\mathbb{P}_2(X), \mathbb{W}_2)$ by
\begin{align*}
\Iso(X)&\times \mathbb{P}_2(X) \rightarrow \mathbb{P}_2(X) \\
& (g,\mu) \longmapsto  g_{\#}\mu.
\end{align*}
Next, notice that  if $\mu \in \mathbb{P}_2(X,\mathfrak{m}),$ $\mu= \rho \mathfrak{m},$ then $g_{\#}\mu \in \mathbb{P}_2(X,g_{\#}\mathfrak{m})$ and for any $E\in \mathcal{B}(X): $
$$g_{\#}\mu (E) = \mu (g^{-1}E) = \int_{g^{-1}E} \rho \,d\mathfrak{m} =  \int_{g^{-1}E} \rho \,d(g^{-1}\circ g)_{\#}\mathfrak{m} = \int_{E}\rho \circ g^{-1}\,dg_{\#}\mathfrak{m}. $$
hence $g_{\#}\mu= \rho\circ g^{-1} g_{\#}\mathfrak{m}.$

From this we have that the relative entropies behave as:
\begin{equation}\label{eq.entropyg}
\Ent_{g_{\#}\mathfrak{m}}(g_{\#}\mu)= \int \rho \circ g^{-1}\log (\rho\circ g^{-1}) \,dg_{\#}\mathfrak{m} = \int \rho \log \rho \,d\mathfrak{m}= \Ent_{\mathfrak{m}}(\mu), 
\end{equation}
and  $g_{\#} D(\Ent_{\mathfrak{m}})= D(Ent_{g_{\#} \mathfrak{m}} ).$
\begin{prop}
$(X,d,g_{\#}\mathfrak{m})$ is $CD^e(K,N).$
\end{prop}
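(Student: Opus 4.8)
The plan is to show that the $CD^e(K,N)$ condition for $(X,d,\mathfrak{m})$ transfers to $(X,d,g_{\#}\mathfrak{m})$ by pushing the entire variational structure forward through the isometry $g$. The key observation is that $g$ acts on $(\mathbb{P}_2(X),\mathbb{W}_2)$ as an isometry via $\mu\mapsto g_{\#}\mu$, so it maps $\mathbb{W}_2$-geodesics to $\mathbb{W}_2$-geodesics and preserves all Wasserstein distances appearing in the distortion inequality \eqref{eq.entropyKNconvex}.

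First I would fix two measures $\nu_0,\nu_1\in D(\Ent_{g_{\#}\mathfrak{m}})$ and pull them back by writing $\mu_i := (g^{-1})_{\#}\nu_i$, so that $\nu_i = g_{\#}\mu_i$. Using the identity $g_{\#}D(\Ent_{\mathfrak{m}})=D(\Ent_{g_{\#}\mathfrak{m}})$ already established in the excerpt, both $\mu_0,\mu_1$ lie in $D(\Ent_{\mathfrak{m}})$. Since $(X,d,\mathfrak{m})$ is $CD^e(K,N)$, there is a geodesic $(\mu_t)_{t\in[0,1]}$ in $D(\Ent_{\mathfrak{m}})$ satisfying the entropic convexity inequality. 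Then I would set $\nu_t := g_{\#}\mu_t$ and verify that $(\nu_t)$ is a $\mathbb{W}_2$-geodesic in $D(\Ent_{g_{\#}\mathfrak{m}})$; this is immediate because $g$ acts isometrically on $\mathbb{P}_2(X)$ and preserves finiteness of entropy.

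The heart of the argument is the translation of the convexity inequality itself. The functional $\mathcal{U}_N$ from \eqref{eq.deffunctional} applied to $\nu_t$ satisfies $\mathcal{U}_N(\nu_t)=\exp(-\tfrac{1}{N}\Ent_{g_{\#}\mathfrak{m}}(\nu_t))=\exp(-\tfrac{1}{N}\Ent_{\mathfrak{m}}(\mu_t))=\mathcal{U}_N(\mu_t)$ by the entropy identity \eqref{eq.entropyg}. Combined with the isometry fact $\mathbb{W}_2(\nu_0,\nu_1)=\mathbb{W}_2(\mu_0,\mu_1)$, which guarantees the distortion coefficients $\sigma_{K/N}^{(t)}$ and $\sigma_{K/N}^{(1-t)}$ are identical on both sides, the inequality \eqref{eq.entropyKNconvex} for $(\mu_t)$ transforms verbatim into the desired inequality for $(\nu_t)$. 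Hence $(X,d,g_{\#}\mathfrak{m})$ is $CD^e(K,N)$.

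I expect the only genuine point requiring care, rather than a formal obstacle, is confirming that $(\nu_t)$ is actually a $\mathbb{W}_2$-\emph{geodesic} and not merely a path with the correct endpoints: one must invoke that $g_{\#}$ is a surjective isometry of $(\mathbb{P}_2(X),\mathbb{W}_2)$, so it carries constant-speed geodesics to constant-speed geodesics. Everything else reduces to the two bookkeeping identities $\mathbb{W}_2(g_{\#}\mu,g_{\#}\mu')=\mathbb{W}_2(\mu,\mu')$ and $\Ent_{g_{\#}\mathfrak{m}}(g_{\#}\mu)=\Ent_{\mathfrak{m}}(\mu)$, both already recorded in the excerpt, so the proof is essentially a change of variables transporting the $CD^e(K,N)$ structure along $g$.
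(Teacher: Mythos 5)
Your proposal is correct and follows essentially the same route as the paper: pull the endpoints back through $g$, take the geodesic furnished on $(X,d,\mathfrak{m})$, push it forward, and transport the inequality using the entropy identity \eqref{eq.entropyg} together with the fact that $g_{\#}$ is an isometry of $(\mathbb{P}_2(X),\mathbb{W}_2)$. The only cosmetic difference is that the paper invokes the uniqueness of the connecting geodesic via Theorem \ref{teo.essnonbranch}, whereas you use the existence clause of the $CD^e(K,N)$ definition directly, which suffices.
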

\begin{proof}
Let $\mu_0, \mu_1 \in \mathbb{P}_2 (X,g_{\#}\mathfrak{m}).$ Then $g_{\#}^{-1}\mu_0, g_{\#}^{-1}\mu_1 \in \mathbb{P}_2 (X,\mathfrak{m}), $ notice that since $(X,d,\mathfrak{m})$ is an $RCD^*(K,N)$ space by Theorem \eqref{teo.essnonbranch} there is a unique geodesic connecting $g_{\#}^{-1}\mu_0$ and $g_{\#}^{-1}\mu_1.$  Furthermore, we write the unique geodesic between them as  $(g_{\#}^{-1}\mu_t )_{t \in [0,1]} \subset \mathbb{P}_2(X,\mathfrak{m}).$  From (\ref{eq.entropyg}) we obtain that $\Ent_{g_{\#}\mathfrak{m}}(\mu_t)= \Ent_{\mathfrak{m}}(g_{\#}^{-1}\mu_t)$ and therefore
\begin{align*}
\mathcal{U}_N^{g_{\#}\mathfrak{m}} (\mu_t)&= \exp\left(-\frac{1}{N}\Ent_{g_{\#}\mathfrak{m}}(\mu_t) \right) \\
&= \exp\left(-\frac{1}{N}\Ent_{\mathfrak{m}}(g_{\#}^{-1}\mu_t) \right) = \mathcal{U}_N^{\mathfrak{m}}(g_{\#}^{-1}\mu_t).
\end{align*}
Since $g$ is an isometry $\mathbb{W}_2(\mu_0,\mu_1)=\mathbb{W}_2(g_{\#}^{-1}\mu_0,g_{\#}^{-1}\mu_1).$ So we conclude that

$$\mathcal{U}_N^{g_{\#}\mathfrak{m}} (\mu_t) \geq \sigma_{K/N}^{(1-t)}(\mathbb{W}_2(\mu_0,\mu_1))\mathcal{U}_N^{g_{\#}\mathfrak{m}} (\mu_0)+\sigma_{K/N}^{(t)}(\mathbb{W}_2(\mu_0,\mu_1))\mathcal{U}_N^{g_{\#}\mathfrak{m}} (\mu_1),  $$as desired.
\end{proof}

\begin{prop}\label{prop.isominfHilb}
$(X,d,g_{\#}\mathfrak{m})$ is infinitesimally Hilbertian.
\end{prop}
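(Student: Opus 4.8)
The plan is to transport the entire first-order structure through the isometry $g$ by composition. For $f \in L^2(g_{\#}\mathfrak{m})$ set $T_g f := f\circ g$. Since $g$ is a bijection and $\int h\, dg_{\#}\mathfrak{m} = \int (h\circ g)\, d\mathfrak{m}$ for every Borel $h$, the map $T_g$ is a linear bijective isometry from $L^2(g_{\#}\mathfrak{m})$ onto $L^2(\mathfrak{m})$ (its inverse being composition with $g^{-1}$); in particular $f_n \to f$ in $L^2(g_{\#}\mathfrak{m})$ if and only if $f_n\circ g \to f\circ g$ in $L^2(\mathfrak{m})$.

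First I would record that isometries preserve the local Lipschitz constant: because $d(g(x),g(y)) = d(x,y)$, one has $\Lip(f\circ g)(x) = (\Lip f)(g(x))$ for every $f \in \LIP(X)$, and $h \mapsto h\circ g$ is a bijection of $\LIP(X)$ onto itself. Combining this with the change-of-variables formula gives, for any Lipschitz $f_n$,
\[
\int |\Lip f_n|^2\, dg_{\#}\mathfrak{m} = \int \big(|\Lip f_n|^2\circ g\big)\, d\mathfrak{m} = \int |\Lip(f_n\circ g)|^2\, d\mathfrak{m}.
\]
Since $T_g$ carries approximating sequences for $f$ in $L^2(g_{\#}\mathfrak{m})$ bijectively onto approximating sequences for $f\circ g$ in $L^2(\mathfrak{m})$ while preserving the energy of each term, the two infima defining the Cheeger energies coincide: $\Ch_{g_{\#}\mathfrak{m}}(f) = \Ch_{\mathfrak{m}}(f\circ g)$ for all $f \in L^2(g_{\#}\mathfrak{m})$. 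In particular $T_g$ restricts to a linear bijection $W^{1,2}(g_{\#}\mathfrak{m}) \to W^{1,2}(\mathfrak{m})$.

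To finish I would use the characterization that infinitesimal Hilbertianity is equivalent to the Cheeger energy being a quadratic form, i.e. to the parallelogram identity. Because $(X,d,\mathfrak{m})$ is $RCD^*(K,N)$ it is infinitesimally Hilbertian, so $\Ch_{\mathfrak{m}}$ satisfies this identity. Using the linearity of $T_g$ together with $\Ch_{g_{\#}\mathfrak{m}} = \Ch_{\mathfrak{m}}\circ T_g$, for any $f_1, f_2 \in W^{1,2}(g_{\#}\mathfrak{m})$,
\[
\Ch_{g_{\#}\mathfrak{m}}(f_1+f_2)+\Ch_{g_{\#}\mathfrak{m}}(f_1-f_2) = \Ch_{\mathfrak{m}}(T_gf_1+T_gf_2)+\Ch_{\mathfrak{m}}(T_gf_1-T_gf_2) = 2\Ch_{g_{\#}\mathfrak{m}}(f_1)+2\Ch_{g_{\#}\mathfrak{m}}(f_2),
\]
so $\Ch_{g_{\#}\mathfrak{m}}$ is quadratic and $(X,d,g_{\#}\mathfrak{m})$ is infinitesimally Hilbertian. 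Equivalently, one may transfer the pointwise parallelogram law \eqref{eq.parallelpoint} directly, observing that $|\nabla f|_{w,g_{\#}\mathfrak{m}} = |\nabla(f\circ g)|_{w,\mathfrak{m}}\circ g^{-1}$.

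The routine but essential point to verify carefully is the identity $\Ch_{g_{\#}\mathfrak{m}}(f) = \Ch_{\mathfrak{m}}(f\circ g)$; everything rests on the fact that $g$, being an isometry, preserves both the $L^2$ structure via the pushforward and the local Lipschitz constants, so that the two variational problems defining the Cheeger energies are literally identified by the bijection $T_g$. Once this is in place the quadraticity transfers formally, and I do not expect any genuine analytic obstacle.
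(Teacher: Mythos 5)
Your proposal is correct and follows essentially the same route as the paper: both transport the Sobolev structure through composition with the isometry (the paper uses $f\mapsto f\circ g^{-1}$, you use the inverse direction $T_g f = f\circ g$), verify that the local Lipschitz constant and the $L^2$ norm are preserved so that $\Ch_{g_{\#}\mathfrak{m}}(f)=\Ch_{\mathfrak{m}}(f\circ g)$, and conclude that the Hilbertianity of $W^{1,2}(\mathfrak{m})$ passes to $W^{1,2}(g_{\#}\mathfrak{m})$. Your explicit verification of the parallelogram identity via the linearity of $T_g$ just spells out the final step that the paper compresses into ``the map is an isometry and we have the result.''
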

\begin{proof}
We define the map: 
\begin{align*}
W^{1,2}(&\mathfrak{m})\rightarrow W^{1,2}(g_{\#}\mathfrak{m}) \\
&f \longmapsto f\circ g^{-1}.
\end{align*}

It is clear that $\|f\|_{L^2(\mathfrak{m})}= \|f\circ g^{-1}\|_{L^2(g_{\#}\mathfrak{m})}.$ As for the Cheeger energy, let $f \in W^{1,2}(\mathfrak{m})$ and consider a sequence $(f_n)_{n\in \mathbb{N}}\subset \LIP (X)$ such that $f_n \rightarrow f$ in $L^2(\mathfrak{m}).$
Notice that
$$\int |f_n\circ g^{-1}-f\circ g^{-1}|^2dg_{\#}\mathfrak{m} = \int|f_n-f|^2d\mathfrak{m}, $$  therefore $f_n\circ g^{-1}\rightarrow f\circ g^{-1} $ in $L^2(g_{\#}\mathfrak{m}).$

Also it is easy to see that $\Lip f_n\circ g^{-1} (x) = \Lip f_n (g^{-1}x)$ so
$$\int |\Lip f_n\circ g^{-1}|^2dg_{\#}\mathfrak{m}= \int |\Lip f_n|^2d\mathfrak{m}. $$
If on the other hand we start with a sequence of functions $(h_n)_{n\in\mathbb{N}}\subset \LIP (X)$ such that 
$h_n \rightarrow f\circ g^{-1}$ in $L^2(g_{\#}\mathfrak{m})$ we have analogous observations. Namely that $h_n \circ g \rightarrow  f$ in $L^2 (\mathfrak{m})$ and $$\int |\Lip h_n \circ g|^2 d\mathfrak{m} = \int |\Lip h_n|^2 d g_{\#}\mathfrak{m}.$$ 
We  conclude that $$\Ch_{\mathfrak{m}}(f) = \Ch_{g_{\#}\mathfrak{m}}(f\circ g^{-1}). $$
So then the map $f \mapsto f\circ g^{-1}$ is an isometry and we have the result.
 \end{proof}

By Theorem $7$ in \cite{EKS} a space  that is both $CD^e(K,N)$ and infinitesimally Hilbertian is equivalent to being $RCD^*(K,N).$ It follows from Theorem \ref{teo.atoms} that the reference measure $g_{\#}\mathfrak{m}$ satisfies the growth condition (\ref{eq.growthcond}) (see Section \eqref{sec.RelEntropy} ), for some non-zero constant and some point in $X.$ Specifically the same constant as for $\mathfrak{m}$  and the point $gx_0$ will work out. We will need to prove that for any compact subgroup $G \leq \Iso(X)$ there exists some constant $c' >0$ such that 
$$\int_{X} \exp(-cd^2(x_0,x))\,dg_{\#}\mathfrak{m} < \infty, \text{ for all } g \in G.$$ 
Let $\delta := \Diam G\cdot x_0.$ Notice that for all $x \in X - B_{x_0}(\delta, X),$  
$$d(gx_0,x)\leq d(gx_0, x_0)+d(x_0,x)\leq 2d(x_0,x)$$ so then
\begin{align*}
\int_{X - B_{x_0}(\delta, X)}\exp(-4cd^2(x_0,x))dg_{\#}m &\leq \int_{X - B_{x_0}(\delta, X)}\exp(-cd^2(gx_0,x))dg_{\#}m\\
&\leq \int_{X} \exp(-cd^2(gx_0,x))dg_{\#}m\\
&=\int_{X} \exp(-cd^2(x_0,x))dm < \infty.
\end{align*}
As for the integral over the ball
\begin{align*}
\int_{B_{x_0}(\delta, X)}\exp(-4cd^2(x_0,x))dg_{\#}\mathfrak{m}\leq M g_{\#}\mathfrak{m}(B_{x_0}(\delta, X)) <\infty,
\end{align*}
where $M:= \max\lbrace \exp(-4cd^2(x_0,x)) \,|\, x\in B_{x_0}(\delta, X) \rbrace.$
 So $g_{\#}\mathfrak{m}$ satisfies the growth condition (\ref{eq.growthcond}) for $x_0$ and $4c.$ 
\section{$G-$invariant measures.}

Let $(X,d,\mathfrak{m})$ be an $RCD^*(K,N)$ space, throughout this section $G$ will denote a compact subgroup of $\Iso(X).$  Recall that two measures are said to be equivalent if they are mutually absolutely continuous.

\begin{prop}
If $g \in \Iso (X)$ then $\mathfrak{m}$ and $g_{\#}\mathfrak{m}$ are equivalent.
\end{prop}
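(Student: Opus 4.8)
The plan is to show that $\mathfrak{m}$ and $g_{\#}\mathfrak{m}$ are equivalent by exhibiting both as reference measures of $RCD^*(K,N)$ structures on the \emph{same} metric space $(X,d)$, and then invoking Kell's measure rigidity (Theorem \ref{teo.equivmeas}). The essential point is that the isometry $g$ transports the $RCD^*(K,N)$ structure of $(X,d,\mathfrak{m})$ to an $RCD^*(K,N)$ structure built on the identical metric $(X,d)$, with the push-forward measure $g_{\#}\mathfrak{m}$ as its reference.

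First I would observe that the propositions immediately preceding this statement already do almost all of the work: the first establishes that $(X,d,g_{\#}\mathfrak{m})$ is $CD^e(K,N)$, and Proposition \ref{prop.isominfHilb} establishes that $(X,d,g_{\#}\mathfrak{m})$ is infinitesimally Hilbertian. By the equivalence recorded after Proposition \ref{prop.isominfHilb} (Theorem $7$ of \cite{EKS}), a space that is simultaneously $CD^e(K,N)$ and infinitesimally Hilbertian is $RCD^*(K,N)$. I would also recall from the discussion at the end of the previous section that $g_{\#}\mathfrak{m}$ satisfies the growth condition \eqref{eq.growthcond}, so it is a genuine admissible reference measure and the entropy is well-behaved. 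Thus $(X,d,g_{\#}\mathfrak{m})$ is an $RCD^*(K,N)$ space.

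With both $(X,d,\mathfrak{m})$ and $(X,d,g_{\#}\mathfrak{m})$ identified as $RCD^*(K,N)$ spaces sharing the common metric space $(X,d)$, I would apply Theorem \ref{teo.equivmeas} directly with $\mathfrak{m}_1 = \mathfrak{m}$ and $\mathfrak{m}_2 = g_{\#}\mathfrak{m}$. That theorem concludes that $\mathfrak{m}$ and $g_{\#}\mathfrak{m}$ are mutually absolutely continuous, which is precisely the assertion that they are equivalent. This completes the proof.

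I do not expect any genuine obstacle, since the two preceding propositions supply exactly the hypotheses Kell's theorem requires; the only point demanding care is verifying that $g_{\#}\mathfrak{m}$ is an admissible reference measure (Radon, no atoms, and satisfying the growth condition on the \emph{same} base point $x_0$), but this was already handled in the computation closing the previous section, where the growth condition was checked with constant $4c$. The conceptual heart of the argument is the realization that an isometry preserves the metric exactly while merely transporting the measure, so both measures live over one and the same $(X,d)$ and Theorem \ref{teo.equivmeas} becomes applicable.
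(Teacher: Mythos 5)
Your proposal is correct and follows exactly the same route as the paper: the preceding section establishes that $(X,d,g_{\#}\mathfrak{m})$ is $CD^e(K,N)$ and infinitesimally Hilbertian, hence $RCD^*(K,N)$, and then Kell's measure rigidity (Theorem \ref{teo.equivmeas}) applied to the two reference measures on the common metric space $(X,d)$ gives mutual absolute continuity. Your extra care about the growth condition is consistent with the paper's own verification at the end of the previous section.
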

\begin{proof}
Let $g\in \Iso(X).$  In the previous section we proved that $(X,d, g_{\#}\mathfrak{m})$ is an $RCD^*(K,N)$ space. Since $(X,d,\mathfrak{m})$ is also an $RCD^*(K,N)$ space by Theorem \ref{teo.equivmeas}  $g_{\#}\mathfrak{m}$ and $\mathfrak{m}$ must be equivalent.
\end{proof}

 Denote the Radon-Nikodym derivative of $g_{\#}\mathfrak{m}$  by   $\varphi_g = \frac{d g_{\#}\mathfrak{m}}{d\mathfrak{m}}.  $ 
\begin{prop}\label{prop.cocycle}
For all $g, h \in \Iso(X),$   we have that the following cocycle condition holds:   $\varphi_{hg}= \varphi_g \circ h^{-1} \varphi_h.$  
\end{prop}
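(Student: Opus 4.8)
The plan is to verify the cocycle identity $\varphi_{hg} = (\varphi_g \circ h^{-1})\,\varphi_h$ by unwinding the definition of the pushforward measure together with the chain rule for Radon--Nikodym derivatives. The key point is that pushforward is functorial, $(hg)_{\#}\mathfrak{m} = h_{\#}(g_{\#}\mathfrak{m})$, so I would factor the derivative through the intermediate measure $g_{\#}\mathfrak{m}$.

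First I would fix $g,h \in \Iso(X)$ and test the claimed identity against an arbitrary Borel set $E \in \mathcal{B}(X)$ by integrating. By definition of the Radon--Nikodym derivative,
\begin{equation*}
(hg)_{\#}\mathfrak{m}(E) = \int_E \varphi_{hg}\, d\mathfrak{m}.
\end{equation*}
On the other hand, using $(hg)_{\#}\mathfrak{m} = h_{\#}(g_{\#}\mathfrak{m})$ and the change-of-variables computation already carried out in Section~3 (namely that pushing a density $\rho\,\mathfrak{m}$ forward by $h$ yields $(\rho\circ h^{-1})\,h_{\#}\mathfrak{m}$), I would rewrite $g_{\#}\mathfrak{m} = \varphi_g\,\mathfrak{m}$ and push forward by $h$ to get
\begin{equation*}
(hg)_{\#}\mathfrak{m} = h_{\#}(\varphi_g\,\mathfrak{m}) = (\varphi_g \circ h^{-1})\, h_{\#}\mathfrak{m} = (\varphi_g \circ h^{-1})\,\varphi_h\,\mathfrak{m}.
\end{equation*}
Comparing the two expressions for $(hg)_{\#}\mathfrak{m}$ and using the $\mathfrak{m}$-a.e.\ uniqueness of the Radon--Nikodym derivative then gives $\varphi_{hg} = (\varphi_g \circ h^{-1})\,\varphi_h$ $\mathfrak{m}$-a.e., which is the asserted cocycle condition.

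The only genuine subtlety — and what I expect to be the main (if modest) obstacle — is justifying the middle equality $h_{\#}(\varphi_g\,\mathfrak{m}) = (\varphi_g\circ h^{-1})\,h_{\#}\mathfrak{m}$ cleanly, i.e.\ checking that the density of a pushed-forward absolutely continuous measure transforms by precomputation with $h^{-1}$. This is precisely the identity $g_{\#}\mu = (\rho\circ g^{-1})\,g_{\#}\mathfrak{m}$ established in the opening computation of Section~3, applied with $h$ in place of $g$ and $\varphi_g$ in place of $\rho$; since $\varphi_g \in L^1_{\mathrm{loc}}(\mathfrak{m})$ and all measures in sight are mutually absolutely continuous by the preceding proposition, every quantity appearing is well defined $\mathfrak{m}$-a.e.\ and the manipulation is legitimate. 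Everything else is a routine application of the functoriality of pushforward and the a.e.\ uniqueness of densities.
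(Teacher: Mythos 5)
Your proposal is correct and is essentially the paper's own argument: the paper also tests against an arbitrary Borel set, writes $(hg)_{\#}\mathfrak{m}(A)=g_{\#}\mathfrak{m}(h^{-1}A)=\int_{h^{-1}A}\varphi_g\,d\mathfrak{m}$, and then applies exactly the change-of-variables identity $h_{\#}(\rho\,\mathfrak{m})=(\rho\circ h^{-1})\,h_{\#}\mathfrak{m}$ from the opening of Section~3 before invoking uniqueness of the Radon--Nikodym derivative. Your phrasing at the level of measures rather than integrals is only a cosmetic difference.
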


\begin{proof}
For any $A \in \mathcal{B}(X),$ 
\begin{align*}
\int_A \varphi_{hg} \,d\mathfrak{m}&= (hg)_{\#}\mathfrak{m}(A)= g_{\#}\mathfrak{m}(h^{-1}A)=\\ 
&\int_{h^{-1}A}\varphi_g \,d\mathfrak{m}= \int_{h^{-1}A}\varphi_g \,d(h^{-1}\circ h)_{\#}\mathfrak{m} = \int_A (\varphi_g \circ h^{-1})\,\varphi_h \,d\mathfrak{m}.
\end{align*}
\end{proof}

The main objective of this section will be to prove the following result:
\begin{thmp}\label{teo:A}
Let $(X,d,\mathfrak{m})$ be an $RCD^*(K,N)$ space and, $G\leq \Iso(X)$ a compact subgroup. Then there exists a $G-$invariant measure $\mathfrak{m}_G,$ equivalent to $\mathfrak{m},$  such that $(X,d,\mathfrak{m}_G)$ is an $RCD^*(K,N)$ space. 
\end{thmp}

A natural way to construct a $G-$invariant measure $\tilde{\mathfrak{m}}$ is just to take the mean of the measures $g_{\#}\mathfrak{m}$  with respect to $\mathbb{H},$ the Haar measure of $G.$ That is,
$$\tilde{\mathfrak{m}}:=  \int_{G}g_{\#}\mathfrak{m}\,d\mathbb{H}(g). $$It is easy to see that $\tilde{\mathfrak{m}}$ is a Radon measure  equivalent to the original  $\mathfrak{m}.$ Therefore   $\tilde{\mathfrak{m}}=\int_{G}g_{\#}\mathfrak{m}\,d\mathbb{H} = \Phi_G \mathfrak{m},$   

By exercise $6.10.72$ of \cite{Bog} we can find a  $\mathfrak{m}\otimes \mathbb{H}-$measurable function $F$  on $X\times G$ such that  for all $g\in G,$   $x\in X,$ $(x,g) \mapsto F(x,g)=  \varphi_g (x):= \frac{d g_{\#}\mathfrak{m} }{d\mathfrak{m}}(x). $ It is clear that $F$ is non-negative.
If $A \in \mathcal{B}(X),$  by Fubini-Tonelli 
\begin{align*}
\tilde{\mathfrak{m}}(A)= \int_A \Phi_G \,d\mathfrak{m} &=\int_G g_{\#}\mathfrak{m}(A)\,d\mathbb{H} = \\
& \int_G \left(\int_A \varphi_g \,d\mathfrak{m}\right)d\mathbb{H}= 
\int_A \left(\int_G F d\mathbb{H} \right)d\mathfrak{m}.
\end{align*}
It follows then that  $\Phi_G = \int_G \varphi_g \,d\mathbb{H}. $

Now we define the locally integrable function 

\begin{equation}\label{eq.psiphi}
\Psi_G (x) := \exp \left(\int_G\log \varphi_g (x) \,d\mathbb{H} \right) \leq \Phi_G (x).
\end{equation}
where the inequality follows from applying Jensen's inequality. The aim of this section will be to prove that the m.m.s. $(X,d, \mathfrak{m}_G),$ where  $\mathfrak{m}_G := \Psi_G \mathfrak{m},$ is an $RCD^*(K,N) $ space.  Notice that $\mathfrak{m}_G $ and $\mathfrak{m}$ are mutually absolutely continuous.

\begin{lem}
$\mathfrak{m}_G$ is  a $G-$invariant measure.
\end{lem}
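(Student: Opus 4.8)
The plan is to show directly that $h_{\#}\mathfrak{m}_G = \mathfrak{m}_G$ for every $h \in G$, by computing the Radon--Nikodym derivative of $h_{\#}\mathfrak{m}_G$ with respect to $\mathfrak{m}$ and checking that it equals $\Psi_G$. First I would record, exactly as in the computation preceding equation \eqref{eq.entropyg}, that pushing forward a density rescales it: since $\mathfrak{m}_G = \Psi_G \mathfrak{m}$ and $h_{\#}\mathfrak{m} = \varphi_h \mathfrak{m}$, we obtain
$$h_{\#}\mathfrak{m}_G = (\Psi_G \circ h^{-1})\, h_{\#}\mathfrak{m} = (\Psi_G \circ h^{-1})\,\varphi_h\, \mathfrak{m}.$$
So it suffices to prove the pointwise identity $(\Psi_G \circ h^{-1})(x)\,\varphi_h(x) = \Psi_G(x)$ for $\mathfrak{m}$--a.e. $x$.

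The key input is the cocycle condition of Proposition \ref{prop.cocycle}, $\varphi_{hg} = (\varphi_g\circ h^{-1})\,\varphi_h$, which I would rewrite as
$$\varphi_g(h^{-1}x) = \frac{\varphi_{hg}(x)}{\varphi_h(x)},$$
valid $\mathfrak{m}$--a.e. for each fixed $g$. Taking logarithms and integrating over $G$ against the Haar measure $\mathbb{H}$ gives
$$\int_G \log\varphi_g(h^{-1}x)\,d\mathbb{H}(g) = \int_G \log\varphi_{hg}(x)\,d\mathbb{H}(g) - \log\varphi_h(x),$$
since $\mathbb{H}(G)=1$. Here I would invoke the left-invariance of the Haar measure: the substitution $g \mapsto hg$ preserves $\mathbb{H}$, so the first integral on the right equals $\int_G \log\varphi_{g'}(x)\,d\mathbb{H}(g') = \log\Psi_G(x)$ by the very definition \eqref{eq.psiphi} of $\Psi_G$. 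Exponentiating yields $\Psi_G(h^{-1}x) = \Psi_G(x)/\varphi_h(x)$, which is the desired identity, and hence $h_{\#}\mathfrak{m}_G = \mathfrak{m}_G$.

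The main point to be careful about is the interchange of the logarithm, the integral over $G$, and the almost-everywhere qualifiers: one must know that $(x,g)\mapsto \varphi_g(x)$ admits a jointly measurable representative $F$ (provided by exercise $6.10.72$ of \cite{Bog}, already used above) so that Fubini--Tonelli applies and so that the cocycle identity may be integrated in $g$ for $\mathfrak{m}$--a.e. $x$. The genuinely essential ingredient, however, is the left-invariance of $\mathbb{H}$, which is precisely what converts the cocycle relation into the invariance of $\Psi_G$; everything else is bookkeeping.
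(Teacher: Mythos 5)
Your proposal is correct and follows essentially the same route as the paper: write $h_{\#}\mathfrak{m}_G=(\Psi_G\circ h^{-1})\,\varphi_h\,\mathfrak{m}$, substitute the cocycle identity $\varphi_g\circ h^{-1}=\varphi_{hg}\,\varphi_h^{-1}$ inside the defining integral of $\Psi_G$, and use the invariance of the Haar measure under $g\mapsto hg$ to cancel the $\varphi_h$ factors. Your added remarks on joint measurability of $(x,g)\mapsto\varphi_g(x)$ and the normalization $\mathbb{H}(G)=1$ are just the bookkeeping the paper set up before the lemma.
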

\begin{proof}
The result follows from the cocycle condition proved in Proposition \ref{prop.cocycle}. If $h\in G,$ notice that $h_{\#}\mathfrak{m}_G=  (\Psi_G \circ h^{-1})\,\varphi_h \mathfrak{m}.$ Then
\begin{align*}
\left(\Psi_G \circ h^{-1}\right)\varphi_h &= \exp\left(\int_G \log \varphi_g \circ h^{-1} d\mathbb{H}(g) \right)\varphi_h \\
											 &= \exp\left(\int_G \log \varphi_{hg} \varphi_h^{-1} d\mathbb{H}(g) \right)\varphi_h \\
											 &= \exp\left(\int_G \log \varphi_{hg} d\mathbb{H}(g) \right)\varphi_h^{-1} \varphi_h = \Psi_G.
\end{align*}
\end{proof}

Now, before checking Curvature-Dimension condition,  let us look at the entropy functional. The next Lemma characterizes the domain of $\Ent_{\mathfrak{m}_G}.$

\begin{lem}
The entropy functional, $\Ent_{\mathfrak{m}_G}$, satisfies the following:
\begin{enumerate}
\item For all $\mu \in \mathbb{P}_2(X),$  $\Ent_{\mathfrak{m}_G}(\mu) \in (-\infty, \infty]$

\item If $g \in G$ and $\mu \in D(\Ent_{\mathfrak{m}_G}),$ then $\Ent_{\mathfrak{m}_G}(\mu)= \Ent_{\mathfrak{m}_G}(g_{\#}\mu) $ and  $\int \log\varphi_g \,d\mu =0.$

\end{enumerate}

\end{lem}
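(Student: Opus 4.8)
The plan is to handle the two items separately, using the growth estimate from the end of the previous section for (1) and the $G$-invariance of $\mathfrak{m}_G$ for (2).

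For (1), the strategy is to show that $\mathfrak{m}_G$ itself satisfies the growth condition \eqref{eq.growthcond}; once this is in hand, the discussion in Section \ref{sec.RelEntropy} gives directly that $\Ent_{\mathfrak{m}_G}(\mu)>-\infty$ for every $\mu\in\mathbb{P}_2(X)$, which is exactly the assertion $\Ent_{\mathfrak{m}_G}(\mu)\in(-\infty,\infty]$. To obtain the growth condition I would use that, by Jensen's inequality \eqref{eq.psiphi}, one has $\Psi_G\le\Phi_G$ pointwise, hence $\mathfrak{m}_G\le\tilde{\mathfrak{m}}=\int_G g_{\#}\mathfrak{m}\,d\mathbb{H}(g)$. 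It therefore suffices to control $\int\exp(-4c\,d^2(x_0,x))\,d\tilde{\mathfrak{m}}$, which by Tonelli (and the identity $\Phi_G=\int_G\varphi_g\,d\mathbb{H}$) equals $\int_G\big(\int\exp(-4c\,d^2(x_0,x))\,dg_{\#}\mathfrak{m}\big)\,d\mathbb{H}(g)$. The computation closing the previous section shows each inner integral is finite with a bound uniform over the compact group $G$ (the ball contribution is $g_{\#}\mathfrak{m}(B_{x_0}(\delta))=\mathfrak{m}(g^{-1}B_{x_0}(\delta))$, and these sets remain inside a fixed bounded region), so integrating against the probability measure $\mathbb{H}$ keeps the total finite.

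For the first equality in (2), I would invoke the $G$-invariance of $\mathfrak{m}_G$ from the preceding lemma, namely $g_{\#}\mathfrak{m}_G=\mathfrak{m}_G$. Repeating the computation \eqref{eq.entropyg} with $\mathfrak{m}_G$ in place of $\mathfrak{m}$ as reference measure yields $\Ent_{g_{\#}\mathfrak{m}_G}(g_{\#}\mu)=\Ent_{\mathfrak{m}_G}(\mu)$; substituting $g_{\#}\mathfrak{m}_G=\mathfrak{m}_G$ on the left gives $\Ent_{\mathfrak{m}_G}(g_{\#}\mu)=\Ent_{\mathfrak{m}_G}(\mu)$. In particular $g_{\#}$ preserves $D(\Ent_{\mathfrak{m}_G})$.

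The second identity is the delicate part, and I expect it to be the main obstacle. I would first record the change-of-reference formula $\Ent_{\mathfrak{m}_G}(\nu)=\Ent_{\mathfrak{m}}(\nu)-\int\log\Psi_G\,d\nu$, observing that for $\nu\in D(\Ent_{\mathfrak{m}_G})$ all entropies in sight are finite by (1), so that $\log\varphi_g$ and $\log\Psi_G$ are genuinely $\mu$-integrable and the manipulations below are legitimate. Combining this with the pointwise identity $\log\varphi_g=\log\Psi_G-\log\Psi_G\circ g^{-1}$ (which follows from the invariance relation $(\Psi_G\circ g^{-1})\varphi_g=\Psi_G$ used in the previous lemma) and the change of variables $\int\log\Psi_G\circ g^{-1}\,d\mu=\int\log\Psi_G\,d(g^{-1}_{\#}\mu)$ reduces $\int\log\varphi_g\,d\mu$ to $\int\log\Psi_G\,d\mu-\int\log\Psi_G\,d(g^{-1}_{\#}\mu)$. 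The crux is to show this difference vanishes: after cancelling the equal $\mathfrak{m}_G$-entropies by the first part of (2) it becomes $\Ent_{\mathfrak{m}}(\mu)-\Ent_{\mathfrak{m}}(g^{-1}_{\#}\mu)$, so one is reduced to the invariance of the $\mathfrak{m}$-entropy along the orbit of $\mu$. This is precisely where the symmetry of the averaged density $\Psi_G$ and the full $G$-invariance of $\mathfrak{m}_G$ must be exploited, rather than the mere quasi-invariance of $\mathfrak{m}$, and I would concentrate the careful argument there (in particular checking the step is compatible with the $G$-symmetry of the measures to which the identity is applied).
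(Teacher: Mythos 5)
Your treatment of item (1) and of the first equality in item (2) is correct and coincides with the paper's proof: the growth condition for $\mathfrak{m}_G$ follows from $\Psi_G\le\Phi_G$ together with Tonelli and the estimate $\int\exp(-4c\,d^2(x_0,\cdot))\,dg_{\#}\mathfrak{m}<\infty$ established at the end of the quasi-invariance section, and $\Ent_{\mathfrak{m}_G}(g_{\#}\mu)=\Ent_{\mathfrak{m}_G}(\mu)$ is immediate from $g_{\#}\mathfrak{m}_G=\mathfrak{m}_G$ and the computation \eqref{eq.entropyg}.

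The gap is in the second identity of (2), and you have in effect acknowledged it yourself: your chain of reductions terminates at the claim $\Ent_{\mathfrak{m}}(\mu)=\Ent_{\mathfrak{m}}(g^{-1}_{\#}\mu)$, which by \eqref{eq.changevarm} is \emph{literally} the assertion $\int\log\varphi_{g^{-1}}\,d\mu=0$, i.e.\ the identity to be proven with $g^{-1}$ in place of $g$. Concretely, combining the change-of-reference formula for $\mu$ and for $g^{-1}_{\#}\mu$, the relation $\log\varphi_g=\log\Psi_G-\log\Psi_G\circ g^{-1}$, and the first half of (2) yields only the symmetry $\int\log\varphi_g\,d\mu=\int\log\varphi_{g^{-1}}\,d\mu$, which does not force either side to vanish; deferring ``the careful argument'' to exactly this point means the essential content of the lemma is missing, and the reduction as stated is circular. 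For comparison, the paper closes the loop with the chain
$\Ent_{\mathfrak{m}_G}(\mu)=\Ent_{\mathfrak{m}_G}(g_{\#}\mu)
=\Ent_{\mathfrak{m}}(g_{\#}\mu)-\int\!\int_G\log\varphi_h\,d\mathbb{H}\,d\mu
=\Ent_{\mathfrak{m}}(\mu)-\int\log\varphi_g\,d\mu-\int\!\int_G\log\varphi_h\,d\mathbb{H}\,d\mu
=\Ent_{\mathfrak{m}_G}(\mu)-\int\log\varphi_g\,d\mu$,
in which the $\log\Psi_G$--correction attached to $\Ent_{\mathfrak{m}_G}(g_{\#}\mu)$ is integrated against $\mu$ rather than against $g_{\#}\mu$; that is precisely the step at which your more literal application of \eqref{eq.changevarmG} produces the residual term $\int\log\varphi_{g^{-1}}\,d\mu$. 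So you have correctly located the delicate point, but to obtain a proof you must either justify $\int\log\Psi_G\,d(g_{\#}\mu)=\int\log\Psi_G\,d\mu$ or supply a genuinely different argument for the vanishing; as written, the second identity is not established.
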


\begin{proof}[Proof of $(1)$]
  From previous discussions we have that there exist $c>0$ and $x_0 \in X$ such that
$$\int_X \exp(-cd^2(x_0,x))\,dg_{\#}\mathfrak{m} < \infty \quad\text{ for all } g \in G. $$ 
Integrating over $G$ and then recalling that $\Psi_G \leq \Phi_G$ we obtain
$$\int_X \exp (-cd^2(x_0,x))\,d\mathfrak{m}_G \leq \int_G\int_X \exp (-c d^2(x_0,x))\varphi_g \,d\mathfrak{m}\,d\mathbb{H}< \infty. $$ Hence $\mathfrak{m}_G$ satisfies the growth condition (\ref{eq.growthcond}) and by the argument made in Section $2$ of \cite{AmbGigMonRaj} we have that
$$\Ent_{\mathfrak{m}_G}(\mu) > -\infty \quad \quad \text{ for all } \mu \in \mathbb{P}_2 (X). $$
\end{proof}

\begin{proof}[Proof of $(2)$]

For $(2).$ Let $\mu \in D(\Ent_{\mathfrak{m}_G}).$ Since $\mathfrak{m}_G = \Psi_G\mathfrak{m}$ we have the following formula (see Section $7.1$ in \cite{AmbGigSavII})
\begin{equation}\label{eq.changevarmG}
\Ent_{\mathfrak{m}_G}(\mu) = \Ent_{\mathfrak{m}}(\mu) -\int \int_G \log \varphi_g \,d\mathbb{H}d\mu.
\end{equation}
Since $\mathfrak{m}$ satisfies the growth condition (\ref{eq.growthcond}) it follows that $\Ent_{\mathfrak{m}}(\mu) > -\infty,$ which means that $-\int_X \int_G \log \varphi_g \,d\mathbb{H} \,d\mu>-\infty $  and hence $\Ent_{\mathfrak{m}}(\mu) < \infty $ as well.  From this it follows that 
$-\int_X \int_G \log \varphi_g \,d\mathbb{H} \,d\mu< \infty.$ Given $g \in G$ we have the following formula for $\Ent_\mathfrak{m}:$
\begin{equation}\label{eq.changevarm}
\Ent_{\mathfrak{m}}(g_{\#}\mu) = \Ent_{\mathfrak{m}}(\mu) -\int \log \varphi_g \,d\mu
\end{equation}

By the $G-$invariance of $\mathfrak{m}_G,$ $\Ent_{\mathfrak{m}_G}(\mu) =  \Ent_{g_{\#}\mathfrak{m}_G}(\mu)=\Ent_{\mathfrak{m}_G}(g^{-1}_{\#}\mu).$ 
Finally, if $\mu \in D(\Ent_{\mathfrak{m}_G}),$ $g\in G, $ then by \eqref{eq.changevarmG}, \eqref{eq.changevarm}, and the $G-$invariance we have:
\begin{align*}
\Ent_{\mathfrak{m}_G}(\mu) &= \Ent_{\mathfrak{m}_G}(g_{\#}\mu) \\
											 &= \Ent_{\mathfrak{m}}(g_{\#}\mu)  -\int \int_G \log \varphi_h \,d\mathbb{H}\,d\mu \\
											 &=\Ent_{\mathfrak{m}}(\mu)-\int \log \varphi_g \,d\mu  -\int \int_G \log \varphi_h \,d\mathbb{H}\,d\mu \\
											 &= \Ent_{\mathfrak{m}_G}(\mu)-\int \log \varphi_g \,d\mu.  
\end{align*}
We conclude then that $\int \log \varphi_g \,d\mu=0.$
\end{proof}

\begin{teo}
The m.m.s. $(X,d, \mathfrak{m}_G)$ is a $CD^e (K,N)$ space.
\end{teo}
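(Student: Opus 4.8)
The plan is to realize the entropy for $\mathfrak{m}_G$ as a Haar average of the entropies for the push-forwards $g_{\#}\mathfrak{m}$, each of which already enjoys the convexity we want, and then to average those convexity inequalities in a way compatible with the functional $\mathcal{U}_N$. The starting point is the change-of-variables formula for the entropy under a reweighting of the reference measure (Section $7.1$ of \cite{AmbGigSavII}), which for $g_{\#}\mathfrak{m}=\varphi_g\mathfrak{m}$ reads $\Ent_{g_{\#}\mathfrak{m}}(\mu)=\Ent_{\mathfrak{m}}(\mu)-\int\log\varphi_g\,d\mu$. Integrating this over $G$ against the Haar measure $\mathbb{H}$ and comparing with \eqref{eq.changevarmG} and the definition $\log\Psi_G=\int_G\log\varphi_g\,d\mathbb{H}$ (using Fubini--Tonelli), one obtains the key identity
\begin{equation*}
\Ent_{\mathfrak{m}_G}(\mu)=\int_G\Ent_{g_{\#}\mathfrak{m}}(\mu)\,d\mathbb{H}(g).
\end{equation*}
Applying $\exp(-\tfrac1N\,\cdot\,)$ this becomes
\begin{equation*}
\mathcal{U}_N^{\mathfrak{m}_G}(\mu)=\exp\left(\int_G\log\mathcal{U}_N^{g_{\#}\mathfrak{m}}(\mu)\,d\mathbb{H}(g)\right),
\end{equation*}
so that $\mathcal{U}_N^{\mathfrak{m}_G}(\mu)$ is exactly the geometric mean over $(G,\mathbb{H})$ of the functionals $\mathcal{U}_N^{g_{\#}\mathfrak{m}}(\mu)$.

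Next I would fix $\mu_0,\mu_1\in D(\Ent_{\mathfrak{m}_G})$. Since $\mathfrak{m}_G$ and $\mathfrak{m}$ are mutually absolutely continuous, $\mu_0,\mu_1\ll\mathfrak{m}$, so Theorem \ref{teo.essnonbranch} provides a \emph{unique} $\mathbb{W}_2$-geodesic $(\mu_t)_{t\in[0,1]}$ joining them. Crucially, each $(X,d,g_{\#}\mathfrak{m})$ is again $RCD^*(K,N)$ and hence essentially non-branching, with $g_{\#}\mathfrak{m}\sim\mathfrak{m}$, so this same geodesic is the unique one seen by every one of them. From the key identity, $\int_G\Ent_{g_{\#}\mathfrak{m}}(\mu_i)\,d\mathbb{H}<\infty$ for $i=0,1$, while each integrand is $>-\infty$ by the growth condition; hence $\mu_0,\mu_1\in D(\Ent_{g_{\#}\mathfrak{m}})$ for $\mathbb{H}$-a.e.\ $g$. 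For each such $g$ the $CD^e(K,N)$ condition of $(X,d,g_{\#}\mathfrak{m})$, necessarily realized along $(\mu_t)$, yields
\begin{equation*}
\mathcal{U}_N^{g_{\#}\mathfrak{m}}(\mu_t)\geq\sigma_{K/N}^{(1-t)}(\mathbb{W}_2(\mu_0,\mu_1))\,\mathcal{U}_N^{g_{\#}\mathfrak{m}}(\mu_0)+\sigma_{K/N}^{(t)}(\mathbb{W}_2(\mu_0,\mu_1))\,\mathcal{U}_N^{g_{\#}\mathfrak{m}}(\mu_1).
\end{equation*}

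To conclude I would push this family of inequalities through the geometric mean $F(w):=\exp(\int_G\log w(g)\,d\mathbb{H})$, which is monotone, positively $1$-homogeneous in $w$, and superadditive (the inequality $F(a+b)\geq F(a)+F(b)$ being an immediate consequence of the pointwise weighted arithmetic--geometric mean inequality). Writing $\sigma^{(s)}:=\sigma_{K/N}^{(s)}(\mathbb{W}_2(\mu_0,\mu_1))$, which does not depend on $g$, monotonicity followed by superadditivity and homogeneity gives
\begin{align*}
\mathcal{U}_N^{\mathfrak{m}_G}(\mu_t)=F\!\left(\mathcal{U}_N^{g_{\#}\mathfrak{m}}(\mu_t)\right)&\geq F\!\left(\sigma^{(1-t)}\mathcal{U}_N^{g_{\#}\mathfrak{m}}(\mu_0)+\sigma^{(t)}\mathcal{U}_N^{g_{\#}\mathfrak{m}}(\mu_1)\right)\\
&\geq\sigma^{(1-t)}F\!\left(\mathcal{U}_N^{g_{\#}\mathfrak{m}}(\mu_0)\right)+\sigma^{(t)}F\!\left(\mathcal{U}_N^{g_{\#}\mathfrak{m}}(\mu_1)\right)\\
&=\sigma^{(1-t)}\mathcal{U}_N^{\mathfrak{m}_G}(\mu_0)+\sigma^{(t)}\mathcal{U}_N^{\mathfrak{m}_G}(\mu_1),
\end{align*}
which is the $CD^e(K,N)$ inequality for $\mathfrak{m}_G$; positivity of the right-hand side also shows $\mu_t\in D(\Ent_{\mathfrak{m}_G})$, so the geodesic stays in the domain. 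The genuinely substantive step --- as opposed to bookkeeping --- is the recognition of $\mathcal{U}_N^{\mathfrak{m}_G}$ as a geometric mean together with the superadditivity and homogeneity of that mean, which is what converts the pointwise-in-$g$ convexity into convexity of the averaged functional; the main technical points to guard are the joint measurability in $(x,g)$ needed for Fubini and the a.e.-in-$g$ finiteness that lets one select the common geodesic.
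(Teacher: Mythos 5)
Your proposal is correct and follows the same core strategy as the paper: both rest on the identity $\Ent_{\mathfrak{m}_G}(\mu)=\int_G\Ent_{g_{\#}\mathfrak{m}}(\mu)\,d\mathbb{H}(g)$ obtained from the change-of-variables formula and Fubini, both select the single geodesic $(\mu_t)$ given by Theorem \ref{teo.essnonbranch} (which is the unique one for every equivalent reference measure $g_{\#}\mathfrak{m}$), and both Haar-average the $CD^e(K,N)$ inequalities for the spaces $(X,d,g_{\#}\mathfrak{m})$ before exponentiating back. The one genuine difference is in how the endpoint terms are handled: the paper first proves (in the preceding Lemma) that $\int\log\varphi_g\,d\mu_i=0$ for every $g\in G$ and $\mu_i\in D(\Ent_{\mathfrak{m}_G})$, so that $\mathcal{U}_N^{g_{\#}\mathfrak{m}}(\mu_i)$ is constant in $g$ and the averaged right-hand side is literally unchanged, whereas you leave the endpoint values possibly $g$-dependent and instead invoke monotonicity, homogeneity, and superadditivity of the geometric mean $F(w)=\exp(\int_G\log w\,d\mathbb{H})$ (Mahler's inequality, correctly reduced to the pointwise AM--GM bound $F(u)\le\int u\,d\mathbb{H}$). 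Your route buys a self-contained argument that does not need that lemma and works with only $\mathbb{H}$-a.e.\ membership of $\mu_0,\mu_1$ in $D(\Ent_{g_{\#}\mathfrak{m}})$; the paper's route is slightly more economical at this step because the superadditivity inequality degenerates to an equality once the integrand is known to be constant. For the a.e.\ finiteness you use, note that the uniform-in-$g$ lower bound on $\Ent_{g_{\#}\mathfrak{m}}$ is exactly what the paper's uniform growth-condition estimate (the constant $4c$ and point $x_0$ working for all $g\in G$) provides, so that step is sound.
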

\begin{proof}
Let $\mu_0, \mu_1 \in D(\Ent_{\mathfrak{m}_G}).$ Since $\mu_0,\mu_1$ are absolutely continuous with respect to $\mathfrak{m}$ and $(X,d,\mathfrak{m})$ is an $RCD^*(K,N)$ space, by Theorem \ref{teo.essnonbranch}  there exists a unique geodesic $(\mu_t)_{t \in [0,1]}\subset \mathbb{P}_2(X) $ connecting them.

From the previous Lemma we have that for all $g \in G$

$$\int \log \varphi_g \,d\mu_0=\int \log \varphi_g \,d\mu_1=0. $$ 
By \eqref{eq.changevarm} it follows that $\Ent_\mathfrak{m}(\mu_i)=\Ent_{g_{\#}\mathfrak{m}}(\mu_i),$ $i=0,1.$ That is, the measures $\mu_0,\mu_1 \in D(\Ent_{g_{\#}\mathfrak{m}})$ for all $g \in G.$

Let $t \in [0,1];$ using inequality (\ref{eq.entropyKNconvex}) in the definition of $CD^e(K,N)$ and then integrating over $G,$ we get:
\begin{align*}
\int_G \Ent_{g_{\#}\mathfrak{m}}(\mu_t) \,d\mathbb{H} \leq &-N \log(\sigma_{K/N}^{(1-t)}(\mathbb{W}_2(\mu_0,\mu_1))\,\mathcal{U}_N(\mu_0) \\
&+\sigma_{K/N}^{(t)}(\mathbb{W}_2(\mu_0,\mu_1))\,\mathcal{U}_N(\mu_1)) < +\infty
\end{align*}
Now $\int_G \Ent_{g_{\#}\mathfrak{m}}(\mu_t) d\mathbb{H} = \Ent_{\mathfrak{m}}(\mu_t)-\int_G\int\log\varphi_gd\mu_td\mathbb{H} = \Ent_{\mathfrak{m}_G}(\mu_t).$ Therefore $\mu_t \in D(\Ent_{\mathfrak{m}_G})$ and by the previous Lemma $\int \log\varphi_g d\mu_t =0$ for all $g \in G.$ 

Finally, multiply both sides of the inequality by $-1/N$ and apply the exponential. It is now clear that the functional $\Ent_{\mathfrak{m}_G}$ is $(K,N)-$convex.
\end{proof}

\begin{rem}\label{rem.doublingPoincare}
It is also easy to convince oneself that $(X,d,\mathfrak{m}_G)$ is also essentially non-branching.  Let $\mu_0,\mu_1 \in \mathbb{P}_2 (X)$ with $\mu_0 \ll \mathfrak{m}_G.$ Since $\mathfrak{m}_G$ and $\mathfrak{m}$ are equivalent then $\mu_0 \ll \mathfrak{m}.$ Then by Theorem \ref{teo.essnonbranch} we have that there exists a unique $\pi \in \OptGeo (\mu_0,\mu_1)$ that is concentrated on a set of non-branching geodesics.

 So by Theorem  $3.12$ in \cite{EKS} this m.m.s is $CD^*(K,N).$ Then we have that $\mathfrak{m}_G$ is locally doubling and that $(X,d,\mathfrak{m}_G)$ supports a weak Poincar\'e inequality. (see \cite{BachStu})
\end{rem}

Before checking that the Cheeger energy  $\Ch_{\mathfrak{m}_G}$ is quadratic we need to make some observations on how we can approximate  functions in $W^{1,2}(\mathfrak{m}_G)$ conveniently.

Consider a m.m.s $(X,d,\mathfrak{m}),$ with $\mathfrak{m}$ locally doubling  and such that the minimal weak upper gradients of Lipschitz functions  coincide $\mathfrak{m}-$a.e. with their local Lipschitz constant.  Let $\Lip_b(X),$ $\Lip_c(X)$ denote the space of bounded Lipschitz functions and compactly supported Lipschitz functions respectively. 
 Let $f \in \Lip_b(X)\cap L^2(\mathfrak{m})$ and consider a sequence of compact sets $(K_n)_{n\in \mathbb{N}} \subset X $  such that $\chi_{K_n} \uparrow 1.$ Notice that each $K_n$ has finite measure. 

Let $\phi : \mathbb{R}\rightarrow [0,1]$ be a $1-$Lipschitz, non-increasing function such that 
$\phi \equiv 1$ on $[0,1/3]$ and $\phi \equiv 0$ on $[2/3,\infty).$ With this we define $\phi_n := \phi (d(x,K_n)),$ this function is upper semicontinuous, Lipschitz, bounded and 
\begin{itemize}
\item $\phi_n(x)= |\nabla \phi_n|_w(x)=0$   on $\lbrace x \in X | d(x,K_n)> 2/3 \rbrace,$

\item $|\nabla \phi_n|_w = \Lip \phi_n$ on $K_n.$
\end{itemize}
It is clear that $\phi_nf \in \Lip_c(X)\cap L^2(\mathfrak{m}).$ For the sequence $(\phi_n f)_{n \in \mathbb{N}}$ we have that:
\begin{align*}
 \int |\phi_n f-f|^2 \,d\mathfrak{m} = &\int_{\lbrace x \in X \,|\, d(x,K_n)\leq 1/3 \rbrace } |\phi_n f-f|^2 \,d\mathfrak{m} \\ &+ \int_{\lbrace x \in X \,|\, d(x,K_n) > 1/3 \rbrace } |\phi_n f-f|^2 \,d\mathfrak{m}.
\end{align*}
The first integral on the RHS is equal to  zero because for all $n \in \mathbb{N}$ $\phi_n \equiv 1$ in $\lbrace x \in X \,|\, d(x,K_n)\leq 1/3 \rbrace.$ The second integral may be bounded from above by $\|f\|_{\infty}\mathfrak{m}(\lbrace x \in X \,|\, d(x,K_n) > 1/3 \rbrace).$ 

Since $\chi_{K_n}\uparrow 1$ it follows that $\mathfrak{m}(\lbrace x \in X \,|\, d(x,K_n) > 1/3 \rbrace) \rightarrow 0$ as $n \rightarrow \infty.$ Hence $\phi_n f \rightarrow f$ strongly in $L^2(\mathfrak{m}).$ 

For the sequence $(|\nabla \phi_n f|_w)_{n \in \mathbb{N}}$ we have a similar argument. 
\begin{align*}
\int_X ||\nabla \phi_n f|_w-\Lip f |^2 \,d\mathfrak{m} = &\int_{\lbrace x \in X \,|\, d(x,K_n)\leq 1/3 \rbrace} ||\nabla \phi_n f|_w-\Lip f |^2 \,d\mathfrak{m}\\ &+ \int_{\lbrace x \in X \,|\, d(x,K_n) > 1/3 \rbrace}||\nabla \phi_n f|_w-\Lip f |^2 \,d\mathfrak{m}.
\end{align*}
The first integral on the RHS is zero because $\phi_n  \equiv 1$ on $\lbrace x \in X \,|\, d(x,K_n)\leq 1/3 \rbrace$ and  $|\nabla f|_w = \Lip f$ $\mathfrak{m}-$a.e. The second integral can be bounded from above by $\|\Lip f\|_{L^2(\mathfrak{m})}^2 \mathfrak{m}(\lbrace x \in X \,|\, d(x,K_n) > 1/3 \rbrace).$  This is clear because $f \in W^{1,2}(\mathfrak{m}).$ As before $\mathfrak{m}(\lbrace x \in X \,|\, d(x,K_n) > 1/3 \rbrace) \rightarrow 0$ as $n \rightarrow \infty.$ Therefore it follows that  $|\nabla \phi_n f|_w \rightarrow \Lip f$ strongly in $L^2 (\mathfrak{m}).$  
\begin{teo}
The m.m.s. $(X,d, \mathfrak{m}_G)$ is infinitesimally Hilbertian.
\end{teo}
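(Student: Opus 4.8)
The plan is to show that the Cheeger energy $\Ch_{\mathfrak{m}_G}$ satisfies the parallelogram identity \eqref{eq.parallelpoint}, since that is precisely what it means for $W^{1,2}(\mathfrak{m}_G)$ to be a Hilbert space. The crucial point is that the density $\Psi_G$ is strictly positive and locally bounded away from $0$ and $\infty$ (being an exponential of an integral of log-densities), so the two measures $\mathfrak{m}$ and $\mathfrak{m}_G$ are locally comparable. Because $(X,d,\mathfrak{m})$ is infinitesimally Hilbertian, the parallelogram law \eqref{eq.parallelpoint} holds $\mathfrak{m}$-a.e. for minimal weak upper gradients computed with respect to $\mathfrak{m}$; the task is to transfer this identity to the minimal weak upper gradients computed with respect to $\mathfrak{m}_G$. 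The main obstacle is therefore to verify that for Lipschitz functions the minimal weak upper gradient is insensitive to the change of reference measure between $\mathfrak{m}$ and $\mathfrak{m}_G$.

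First I would record that, by Remark \ref{rem.doublingPoincare}, the space $(X,d,\mathfrak{m}_G)$ is $CD^*(K,N)$, so $\mathfrak{m}_G$ is locally doubling and supports a weak Poincar\'e inequality; the same holds for $\mathfrak{m}$ by Theorem \ref{teo.atoms} and Remark \ref{rem.doublingPoincare} applied to $\mathfrak{m}$ itself. By Cheeger's theorem (Theorem $6.1$ of \cite{Cheeg}) the minimal weak upper gradient of a \emph{locally Lipschitz} function coincides, for \emph{both} reference measures, with the local Lipschitz constant $\Lip f$, which is a purely metric object independent of the measure. Thus for $f \in \LIP(X)$ one has $|\nabla f|_{w,\mathfrak{m}_G} = \Lip f = |\nabla f|_{w,\mathfrak{m}}$, the equalities holding $\mathfrak{m}$-a.e., hence also $\mathfrak{m}_G$-a.e. since the measures are mutually absolutely continuous.

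Next I would use the approximation procedure set up just before the statement: every $f \in \Lip_b(X)\cap L^2(\mathfrak{m}_G)$ can be approximated by compactly supported Lipschitz functions $\phi_n f$ with $\phi_n f \to f$ and $|\nabla \phi_n f|_{w} \to \Lip f$ strongly in $L^2$, where the argument applies verbatim to $\mathfrak{m}_G$ since it too is locally doubling with $\Lip$ equal to the weak upper gradient. This lets me reduce the verification of the parallelogram identity to the class $\Lip_c(X)$, and a density argument then extends it to all of $W^{1,2}(\mathfrak{m}_G)$. For $f,g \in \Lip_c(X)$ I would write, using the identification of weak upper gradients with $\Lip$ for both measures,
\begin{align*}
|\nabla (f+g)|_{w,\mathfrak{m}_G}^2 + |\nabla(f-g)|_{w,\mathfrak{m}_G}^2 &= |\nabla(f+g)|_{w,\mathfrak{m}}^2 + |\nabla(f-g)|_{w,\mathfrak{m}}^2 \\
&= 2\bigl(|\nabla f|_{w,\mathfrak{m}}^2 + |\nabla g|_{w,\mathfrak{m}}^2\bigr) \\
&= 2\bigl(|\nabla f|_{w,\mathfrak{m}_G}^2 + |\nabla g|_{w,\mathfrak{m}_G}^2\bigr),
\end{align*}
the middle equality being \eqref{eq.parallelpoint} for $\mathfrak{m}$. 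Finally I would pass from the pointwise parallelogram identity to quadraticity of $\Ch_{\mathfrak{m}_G}$ by integrating against $\mathfrak{m}_G$ and invoking the characterization of the minimal weak upper gradient as the strong $L^2(\mathfrak{m}_G)$-limit of local Lipschitz constants along an optimal approximating sequence, thereby concluding that $\Ch_{\mathfrak{m}_G}$ is a quadratic form and $W^{1,2}(\mathfrak{m}_G)$ is Hilbert. The delicate point to handle with care is the lower-semicontinuity/relaxation step ensuring that the weak upper gradient of the $L^2(\mathfrak{m}_G)$-limit is controlled by the limit of the $\Lip$ terms, which is exactly where local doubling and the Poincar\'e inequality for $\mathfrak{m}_G$ are needed.
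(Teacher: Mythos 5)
Your proposal is correct and follows essentially the same route as the paper: both arguments use that $\mathfrak{m}_G$ is locally doubling and supports a Poincar\'e inequality (via the $CD^*(K,N)$ property), invoke Theorem $6.1$ of \cite{Cheeg} to identify the minimal weak upper gradients for both $\mathfrak{m}$ and $\mathfrak{m}_G$ with the local Lipschitz constant, transfer the pointwise parallelogram law through mutual absolute continuity, and then integrate and conclude by the approximation on $\Lip_c(X)\cap L^2$ together with the density of $\Lip_b(X)\cap L^2(\mathfrak{m}_G)$ in $W^{1,2}(\mathfrak{m}_G)$. No substantive differences to report.
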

\begin{proof}
Let $f,g \in \Lip_c (X)\cap L^2(\mathfrak{m}_G);$ notice that $f,g \in \Lip_c (X)\cap L^2(\mathfrak{m}).$ Then we have the pointwise parallelogram law (\ref{eq.parallelpoint}) $\mathfrak{m}-$a.e.  By Theorem $6.1$ in \cite{Cheeg}, for  $\mathfrak{m}$ and $\mathfrak{m}_G$ the minimal weak upper gradients of $f$ and $g$ are  just the corresponding Lipschitz constants.  Since $\mathfrak{m} \ll \mathfrak{m}_G \ll \mathfrak{m}$ it follows that we also have the pointwise parallelogram law  (see also equation \eqref{eq.parallelpoint})
$$\nabla(f+g) |_w^2+|\nabla(f-g)|_w^{2} = 2(|\nabla f|_w^2 +|\nabla g|_w^2)  \quad \mathfrak{m}_G-\text{a.e. }$$

 Therefore by integrating we obtain that $\Ch_{\mathfrak{m}_G}$ is quadratic on  $ \Lip_c (X)\cap L^2(\mathfrak{m}_G).$ By the approximation done before we have this on $ \Lip_b (X)\cap L^2(\mathfrak{m}_G),$ and by Proposition $4.1$ in \cite{AmbGigSavII} this last set is dense in $W^{1,2}(\mathfrak{m}_G).$  
\end{proof}
With this we have finished the proof of Theorem $A.$ 
\begin{rem}
The measure $\mathfrak{m}_G$ is not unique in the sense that there might be different $G-$invariant measures that satisfy the Riemannian Curvature Dimension condition. 
Consider $([-1,1], d_E, \mathcal{L}^1),$  where $d_E$ is the Euclidean distance and $\mathcal{L}^1$ is the Lebesgue measure restricted to $[-1,1].$
From example $2.6$ $ ii)$ in \cite{EKS} we have that given $a \in [-1,1],$  $N >0,$ and $K >0$   the function
\begin{equation}
S_a (x) := -N \log (\cosh (d_E (x,a)\sqrt{-K/N})) 
\end{equation}
is $(K,N)-$convex. It clear that $S_a$ is also continuous and bounded from below. Therefore from Proposition $3.31$ in \cite{EKS}  we have that 
$$([-1,1],d_E, \exp (-S_a)\mathcal{L}^1) $$ is an $RCD^*(K,N+1)$ space.
The only non-trivial isometry of [-1,1] is the map $x \mapsto -x.$
We apply Theorem $A$ to $([-1,1],d_E, \exp (-S_a)\mathcal{L}^1)$ and obtain $G-$invariant measures  $\mathfrak{m}_{G,a},$ all of which are equivalent to one another but not equal, since given 
$a, a' \in [-1,1]$ such that $a \neq \pm a'$ we have that
$$\mathfrak{m}_{G,a} ([-1,1]) \neq \mathfrak{m}_{G,a'}([-1,1]). $$
\end{rem}

\section{Applications to Lie group actions}\label{sec.applications}
By Theorem \ref{teo:A} proved in the previous section we can safely  change the reference measure without losing the lower Ricci curvature bounds. So from here on we can assume without loss of generality that whenever we consider a compact subgroup $G\leq \Iso(X)$ it acts by measure preserving isometries on $(X,d,\mathfrak{m}).$

\subsection{Improved dimension bounds on the isometry group.}\label{Boundgroups}
Here we will provide refinements of the bounds obtained in \cite{GuijSan} under additional hypothesis. Let us start by showing that isometries preserve the stratification into $k-$regular sets defined by Theorem \ref{teo.rect}. 
\begin{prop}\label{prop.preservstrat}
If $g\in \Iso(X),$  and $p\in \mathcal{R}_k,$ then $gp \in \mathcal{R}_k$
\end{prop}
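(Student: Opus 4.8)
The plan is to transport the tangent-cone computation through $g$ using the change of reference measure studied above, and then to verify that $k$-regularity does not depend on which equivalent $RCD^*(K,N)$ reference measure one uses.

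First I would record that, by the results established above for $g_{\#}\mathfrak{m}$, the space $(X,d,g_{\#}\mathfrak{m})$ is again $RCD^*(K,N)$ and that $g$ is a measure-preserving isometry $g\colon(X,d,\mathfrak{m})\to(X,d,g_{\#}\mathfrak{m})$ with $g(p)=gp$. The rescaling-and-normalization procedure defining $\Tang$ is invariant under measure-preserving isometries: $g$ is an isometry for every dilated metric $\lambda d$, and $g_{\#}$ carries the normalizing integral at $p$ to the corresponding one at $gp$. Hence $g$ maps the approximating sequence for $(X,d,\mathfrak{m})$ at $p$ to the approximating sequence for $(X,d,g_{\#}\mathfrak{m})$ at $gp$, so the pointed measured Gromov--Hausdorff limits coincide and $\Tang\big((X,d,g_{\#}\mathfrak{m}),gp\big)=\Tang\big((X,d,\mathfrak{m}),p\big)=\{(\mathbb{R}^k,d_E,\mathcal{L}^k,0)\}$. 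Thus $gp$ is $k$-regular for the reference measure $g_{\#}\mathfrak{m}$.

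It remains to pass from $g_{\#}\mathfrak{m}$ back to $\mathfrak{m}$, and this is where the real content lies. By Theorem~\ref{teo.equivmeas} the two measures are mutually absolutely continuous, so each element of $\Tang\big((X,d,\mathfrak{m}),gp\big)$ is still an $RCD^*(0,N)$ space; moreover its underlying metric space must be $\mathbb{R}^k$, since the metric blow-up ignores the measure and, $g$ being a metric isometry, the Gromov--Hausdorff tangent at $gp$ coincides with that at $p$, which is $\mathbb{R}^k$. I would then invoke the Splitting theorem~\ref{teo.split}: such a tangent contains $k$ independent lines, hence splits isometrically \emph{and} measure-theoretically, and iterating the splitting $k$ times forces the reference measure to be a constant multiple of $\mathcal{L}^k$, the normalization built into $\mathfrak{m}_{\lambda}$ pinning the constant to $1$. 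Therefore $\Tang\big((X,d,\mathfrak{m}),gp\big)=\{(\mathbb{R}^k,d_E,\mathcal{L}^k,0)\}$ as well, i.e. $gp\in\mathcal{R}_k$.

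The step I expect to be the main obstacle is exactly the measure-independence just used: showing that a unique \emph{metric} Euclidean tangent already forces the \emph{measured} tangent to be $(\mathbb{R}^k,d_E,\mathcal{L}^k)$ at \emph{every} such point, not merely $\mathfrak{m}$-almost everywhere (where it would follow from Lebesgue differentiation of the density $\varphi_g$ on the locally doubling space). A convenient way to package the whole argument is to introduce the purely metric regular set $\mathcal{S}_k:=\{x\in X:\ \text{the pointed Gromov--Hausdorff tangent at }x\text{ is uniquely }(\mathbb{R}^k,d_E,0)\}$, observe that $\mathcal{S}_k$ is manifestly invariant under $\Iso(X)$ because $g$ is a metric isometry, and then prove $\mathcal{R}_k=\mathcal{S}_k$ via the splitting rigidity above; the proposition then follows at once from $p\in\mathcal{R}_k=\mathcal{S}_k$ together with the $\Iso(X)$-invariance of $\mathcal{S}_k$.
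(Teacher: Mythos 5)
Your argument is correct and its decisive step --- observing that the metric tangent at $gp$ is $\mathbb{R}^k$ because $g$ is a metric isometry, and then forcing the measure on any element of $\Tang(X,gp)$ to be $\mathcal{L}^k$ by iterating the Splitting Theorem \ref{teo.split} --- is exactly the paper's proof. The preliminary detour through $(X,d,g_{\#}\mathfrak{m})$ and the appeal to Theorem \ref{teo.equivmeas} are harmless but superfluous, since that final step alone already carries the whole proposition.
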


\begin{proof}
If $(Y,d_Y.\nu) \in \Tang(X,gp),$ it is easy to see that  $(Y,d_Y)$ is isometric to $(\mathbb{R}^k, d_E).$ Hence $Y$ has lines. Recall that $(Y,d_Y,\nu)$ is a $RCD^*(0,N)$ space, so by the Splitting Theorem \ref{teo.split}  we get that  $(Y,d_Y.\nu)$ is isomorphic as a metric measure space to $(\mathcal{R}^k, d_E,\mathcal{L}^k).$        
\end{proof}

\begin{rem}
From Theorem \ref{teo.rect}  we have that for every $k \in [1,N]\cap \mathbb{N}$ there exists a set, $\tilde{\mathcal{R}_k}\subset\mathcal{R}_k$ such that $\mathfrak{m}(\mathcal{R}_k-\tilde{\mathcal{R}_k})=0$ and $\dim_{\mathcal{H}}\tilde{\mathcal{R}_k} \leq k.$ 

In general it is not clear that  $\dim_{\mathcal{H}} \mathcal{R}_k \leq k$ or that  $G\cdot p \subset \tilde{\mathcal{R}_k}.$ So the only conclusion we can derive from the previous result is that  $\dim_T G\cdot p \leq \dim_T \mathcal{R}_k.$
\end{rem}

\begin{prop}\label{prop.dimorbit}
Let $G \leq \Iso(X)$ be a closed subgroup of positive dimension. Then for $\mathfrak{m}-$a.e. $x \in X$ the topological dimension of the orbit $G\cdot x$ is  positive.
\end{prop}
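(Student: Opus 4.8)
The plan is to reduce the statement to the measure rigidity of fixed-point sets, Proposition \ref{prop.fix}, by translating the condition ``$G\cdot x$ has positive topological dimension'' into a condition on the isotropy group $G_x$. First I would record the structural facts already available: since $\Iso(X)$ is a Lie group and $G$ is closed, $G$ is itself a Lie group, and by Remark \ref{rem.prinorbit} the action of $G$ on $X$ is proper. Consequently each isotropy group $G_x$ is a compact, hence closed, Lie subgroup, and the orbit map descends to a homeomorphism $G/G_x \to G\cdot x$. In particular $G\cdot x$ is an embedded submanifold with
\begin{equation*}
\dim_T(G\cdot x) = \dim G - \dim G_x,
\end{equation*}
so the orbit through $x$ has positive topological dimension precisely when $\dim G_x < \dim G$.

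Next I would identify the exceptional set explicitly. Let $G_0 \leq G$ be the identity component; since $\dim G_0 = \dim G \geq 1$, the group $G_0$ is nontrivial. A closed subgroup $G_x$ satisfies $\dim G_x = \dim G$ if and only if $G_x$ is open in $G$, which in turn holds if and only if $G_x \supseteq G_0$. Therefore the set of points whose orbit is zero-dimensional is
\begin{equation*}
Z := \{x \in X : \dim_T(G\cdot x) = 0\} = \{x \in X : G_0 \subseteq G_x\} = \bigcap_{g \in G_0}\Fix(g).
\end{equation*}

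Finally I would dominate $Z$ by a single fixed-point set. Choosing a nonzero vector $V$ in the Lie algebra of $G$ and setting $g_0 := \exp(tV)$ for a small $t \neq 0$ yields an element $g_0 \in G_0$ with $g_0 \neq \mathrm{id}$, since $\exp$ is injective near $0$. Every $x \in Z$ is fixed by all of $G_0$, in particular by $g_0$, so $Z \subseteq \Fix(g_0)$. Applying Proposition \ref{prop.fix} gives $\mathfrak{m}(\Fix(g_0)) = 0$, whence $\mathfrak{m}(Z) = 0$; that is, for $\mathfrak{m}$-a.e.\ $x \in X$ the orbit $G\cdot x$ has positive topological dimension.

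The main obstacle here is geometric rather than measure-theoretic: one must be certain that a positive-dimensional homogeneous space cannot collapse to something topologically zero-dimensional, i.e.\ that $\dim_T(G\cdot x)$ really equals $\dim G - \dim G_x$. This is exactly what properness of the action (Remark \ref{rem.prinorbit}) secures through the homeomorphism $G/G_x \cong G\cdot x$ and the fact that topological and manifold dimension agree for the homogeneous space $G/G_x$. Once this identification is in place, the conclusion follows at once from the vanishing of the measure of any single fixed-point set.
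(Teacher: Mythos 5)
Your argument is correct and follows essentially the same route as the paper: both identify the points with zero-dimensional orbit as exactly those fixed by the identity component $G_0$ (via $\dim_T(G\cdot x)=\dim G-\dim G_x$) and then invoke Proposition \ref{prop.fix} to conclude this set is $\mathfrak{m}$-null. The only cosmetic differences are that the paper argues by contradiction while you argue directly, and you make explicit the step of dominating $\bigcap_{g\in G_0}\Fix(g)$ by the fixed-point set of a single non-identity element $g_0\in G_0$.
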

 
 \begin{proof}
 Suppose that $A:= \lbrace y\in X |\, \dim_T G\cdot y = 0 \rbrace$ has positive measure. Denote the isotropy group at point $y\in X$ as $G_y.$  Then for every point $y \in A$ we have that $\dim G = \dim G_y.$  In particular  the connected components at the identity are equal, $G_0 = (G_y)_0$, so  $G_0 \cdot y =\lbrace y \rbrace$ for all $y \in A,$ but this implies that $\Fix G_0$ has positive measure, contradicting Proposition \ref{prop.fix}.
 \end{proof}

In \cite{GalKellMonSosa}, an additional condition on the group action was imposed in order to study the $k-$regular sets on $X/G$ (See Theorem $6.3$ \cite{GalKellMonSosa}).  

\begin{defi}\label{def.locLipcoLip}
Let $G \leq \Iso(X)$ be a compact subgroup equipped  with a bi-invariant metric $d_G.$ We will say that $G$ acts \emph{locally Lipschitz and co-Lipshitz on principal orbits} if for every $y\in X$ in a principal orbit there exist constants   $C,R>0$ such that for all $0<r<R$
$$ B_y (C^{-1}r,X)\cap G\cdot y  \subset \lbrace g\cdot y \,|\,  g \in  B_{e}(r,G)\rbrace  \subset  B_{y}(Cr,X)\cap G\cdot y.  $$
\end{defi}

Recall that an $RCD^*(K,N)$  $(X,d,\mathfrak{m})$ has analytic dimension $\underline{\dim}X =k$ 
if $k$ is the largest number such that $\mathfrak{m}(\mathcal{R}_k)>0.$

\begin{prop}\label{prop.menordim}
Let $G \leq \Iso(X)$ a compact subgroup acting locally Lipschitz and co-Lipschitz on principal orbits. If the dimension of $G$ is positive then  $$\underline{\dim }X/G < \underline{\dim}X. $$
\end{prop}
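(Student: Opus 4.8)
The plan is to combine the reduction afforded by Theorem~\ref{teo:A} with the structure of the quotient provided by Theorem~\ref{teo.quotient} and the orbit–dimension bookkeeping of \cite{GalKellMonSosa}. By Theorem~\ref{teo:A} I may replace $\mathfrak{m}$ by a $G$-invariant measure in the same measure class without losing the $RCD^*(K,N)$ condition; since equivalent measures have the same null sets and, at $\mathfrak{m}$-a.e. point, the same normalized tangent, the strata $\mathcal{R}_i$ and the value $\underline{\dim}X$ are unchanged, so I assume $G$ acts by measure-preserving isometries. Let $\pi\colon X\to X/G$, $\pi(x)=G\cdot x$, be the quotient projection, so that $\mathfrak{m}^*=\pi_{\#}\mathfrak{m}$ and, by Theorem~\ref{teo.quotient}, $(X/G,d^*,\mathfrak{m}^*)$ is again $RCD^*(K,N)$ with a well-defined analytic dimension. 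Since $\dim G>0$, Proposition~\ref{prop.dimorbit} together with the Principal Orbit Theorem (Theorem~\ref{teo.prinorbit} and Remark~\ref{rem.prinorbit}) shows that for $\mathfrak{m}$-a.e. $x\in X$ the orbit $G\cdot x$ is principal and has positive topological dimension $m:=\dim_{T}(G\cdot x)\geq 1$, with $m$ the same on the whole principal part because principal orbits are mutually homeomorphic.

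The key step is a pointwise comparison of tangent cones across $\pi$. I would show that for $\mathfrak{m}$-a.e. point $x\in\mathcal{R}_i$ lying on a principal orbit, the image $\pi(x)$ is $(i-m)$-regular in $X/G$, i.e. $\pi(x)\in\mathcal{R}_{i-m}(X/G)$. The argument is a blow-up at $x$: the tangent $\Tang(X,x)=\lbrace(\mathbb{R}^i,d_E,\mathcal{L}^i,0)\rbrace$ is Euclidean, and under the local Lipschitz and co-Lipschitz hypothesis on principal orbits (Definition~\ref{def.locLipcoLip}) the rescaled orbits converge, in the equivariant pointed measured Gromov--Hausdorff sense, to the orbit of a limiting isometric action whose orbit through the basepoint is an $m$-dimensional flat subspace $V\cong\mathbb{R}^m\subset\mathbb{R}^i$. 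Passing to quotients is continuous under equivariant convergence, and $\mathbb{R}^i/V$ is isometric to $\mathbb{R}^{i-m}$, so $\Tang(X/G,\pi(x))=\lbrace(\mathbb{R}^{i-m},d_E,\mathcal{L}^{i-m},0)\rbrace$. This is exactly the content of Theorem~$6.3$ of \cite{GalKellMonSosa}, whose running assumption is precisely Definition~\ref{def.locLipcoLip}.

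Granting this, the conclusion is short. Put $j:=\underline{\dim}(X/G)$, so $\mathfrak{m}^*(\mathcal{R}_j(X/G))>0$. Then $\pi^{-1}(\mathcal{R}_j(X/G))$ has positive $\mathfrak{m}$-measure, and since $\mathfrak{m}\bigl(X\setminus\bigcup_i\mathcal{R}_i\bigr)=0$ by Theorem~\ref{teo.rect}, some stratum $\mathcal{R}_i$ meets it in positive measure; necessarily $i\leq\underline{\dim}X$. For $\mathfrak{m}$-a.e. such $x$ the orbit is principal of dimension $m\geq 1$, so by the key step $\pi(x)\in\mathcal{R}_{i-m}(X/G)$. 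As each point of $X/G$ lies in a single regular stratum, this forces $j=i-m$, whence
\[
\underline{\dim}(X/G)=j=i-m\leq i-1\leq\underline{\dim}X-1<\underline{\dim}X,
\]
which is the desired strict inequality.

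The main obstacle is the tangent-cone identification of the second paragraph: one must verify that blow-up commutes with taking the metric quotient by $G$ and that the limiting orbit is a genuine $m$-dimensional flat, rather than collapsing to lower dimension or failing to linearize. This is exactly where the local Lipschitz and co-Lipschitz condition is indispensable, as it prevents the orbit metric from degenerating at small scales and gives bi-Lipschitz control between $B_e(r,G)$ and $B_y(r,X)\cap G\cdot y$; controlling the equivariant measured Gromov--Hausdorff limit of the rescalings, and hence legitimately invoking Theorem~$6.3$ of \cite{GalKellMonSosa}, is the crux of the argument.
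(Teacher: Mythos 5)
Your proposal is correct and follows essentially the same route as the paper: both arguments rest on Theorem~6.3 of \cite{GalKellMonSosa}, which at an $\mathfrak{m}$-a.e.\ regular point $x$ on a principal orbit gives $n=\dim_T (G\cdot x)+n^*$ for the Euclidean tangent dimensions upstairs and downstairs, combined with Proposition~\ref{prop.dimorbit} to guarantee $\dim_T (G\cdot x)\geq 1$. Your write-up is somewhat more explicit than the paper's about the measure-theoretic bookkeeping needed to pass from this pointwise tangent comparison to the statement about analytic dimensions, but the underlying idea is identical.
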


\begin{proof}
 Let $x_0$ be a regular point with principal orbit $G\cdot x_0.$ Such point exists since by Theorem \ref{teo.prinorbit} and Remark \ref{rem.prinorbit} the set of points with principal orbit has full $\mathfrak{m}-$measure.  Denote by $n$ the dimension of the Euclidean space in $\Tang(X,x_0),$ and by $n^*$  the dimension of the Euclidean space in $\Tang(X/G,G \cdot x_0).$

The group $G$ satisfies the hypothesis of Theorem $6.3$ of \cite{GalKellMonSosa},so we have

$$n= \dim_T G\cdot x_0+n^* $$
Also since $G$ is of positive dimension we may assume that $\dim_T G\cdot x_0 >0,$ therefore

$$n^* < n \leq \underline{\dim} X, $$ which gives us the result. 
\end{proof}

We now extend Proposition $4.1$ of \cite{Harvey} to $RCD^*(K,N)$ spaces. The main tool we need is the lower semicontinuity under Gromov-Hausdorff convergence of the analytic dimension.   

\begin{teo}\label{teo.Lieinyectivo}
Let $G \leq \Iso(X)$ be  a compact subgroup acting locally Lipschitz and co-Lipschitz.  Take $p \in \mathcal{R}_k,$ $\lambda_n \rightarrow \infty,$ and a closed subgroup $H \leq \Iso(\mathbb{R}^k)$ such that 
$$(\lambda_nX,d_n, p, G )\rightarrow (\mathbb{R}^k,d_E,0,H), $$ in the equivariant Gromov-Hausdorff topology. Then the functions $\varphi_n: G \rightarrow H$ used in the convergence  may be chosen to be injective Lie group homomorphisms.
 
\end{teo}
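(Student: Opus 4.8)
The plan is to transfer everything to the infinitesimal level, where the maps become linear. By the Lie structure theorem $G$ is a compact Lie group, and since $H$ is a closed subgroup of $\Iso(\mathbb{R}^k)=\mathbb{R}^k\rtimes O(k)$ it is itself a Lie group, with Lie algebra $\mathfrak h\subseteq\mathbb{R}^k\oplus\mathfrak{so}(k)$; so it suffices to build an injective Lie algebra homomorphism $d\varphi\colon\mathfrak g\to\mathfrak h$ and integrate it near the identity, thereby replacing the a priori merely set-theoretic equivariant approximations $\varphi_n$ on $G(1/\epsilon_n)$ by local homomorphisms. First I would reduce to an effective action by quotienting out the ineffective kernel, which alters neither the blow-up nor the limit group. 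Then, for $v\in\mathfrak g$, I would follow the one-parameter subgroup $t\mapsto\exp(tv)$ acting on the rescaled spaces $\lambda_n X$, splitting along $\mathfrak g=\mathfrak g_p\oplus(\mathfrak g/\mathfrak g_p)$: an isotropy direction $v\in\mathfrak g_p$ fixes $p$ and acts on $\Tang(X,p)=\mathbb{R}^k$ through the isotropy representation, landing in $\mathfrak{so}(k)$, whereas an orbit direction moves $p$ and, after the reparametrisation $t\mapsto t/\lambda_n$ dictated by the blow-up, converges to a translation of $\mathbb{R}^k$. Assembling these limits defines $d\varphi$; this is essentially a Lie algebra contraction of $\mathfrak g$ along $\mathfrak g_p$ realised inside $\mathfrak{iso}(\mathbb{R}^k)$.

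To see that $d\varphi$ is a Lie algebra homomorphism I would use that $\varphi_n$ comes from an equivariant $\epsilon_n$-approximation with $\epsilon_n\to 0$, so on the truncated groups the elements $\varphi_n(gh)$ and $\varphi_n(g)\varphi_n(h)$ differ by an error tending to $0$. Evaluating this approximate multiplicativity on one-parameter subgroups and their commutators, and letting $n\to\infty$, forces the bracket relations to pass to the limit, so $d\varphi$ respects $[\cdot,\cdot]$. Exponentiating then yields the desired local homomorphisms $\varphi_n$, compatible with the convergence.

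The substantive step, and the one I expect to be hardest, is injectivity of $d\varphi$, equivalently faithfulness of the limiting action on the Euclidean tangent. On orbit directions the \emph{co-Lipschitz} half of Definition \ref{def.locLipcoLip} yields a lower bound of the form $d(p,\exp(tv)\,p)\ge c\,t$, so the associated translation is nonzero and $d\varphi$ is injective on $\mathfrak g/\mathfrak g_p$; here the lower semicontinuity of the analytic dimension (Theorem \ref{thm.lscdim}) is exactly what prevents the orbit from collapsing under blow-up and pins the translation subspace to have dimension $\dim_T G\cdot p$. On isotropy directions I must exclude a nonzero $v\in\mathfrak g_p$ whose flow $\exp(tv)$ acts as the identity on $\Tang(X,p)=\mathbb{R}^k$. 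Here the $RCD$ hypothesis is decisive: an isometry fixing $p$ and inducing the identity on the Euclidean tangent ought, by linearisation at the regular point $p$, to fix a set of positive $\mathfrak m$-measure, contradicting Proposition \ref{prop.fix}. Since no slice theorem is available in this generality, the main obstacle is to make this linearisation rigorous, and I would do so by using the uniqueness of tangents together with the rectifiability of $\mathcal R_k$ (Theorem \ref{teo.rect}) to compare $\Fix(\exp(tv))$ near $p$ with the fixed set of its tangent action, the latter being all of $\mathbb{R}^k$ under the contrary assumption.
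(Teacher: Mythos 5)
The construction of the limit homomorphism in your first two paragraphs is essentially a re-derivation of Harvey's convergence theorem (Theorem $3.1$ of \cite{Harvey}), which the paper simply invokes to get that the $\varphi_n$ can be chosen to be Lie group homomorphisms; that part is acceptable in outline, though the passage from approximate multiplicativity to an exact Lie algebra homomorphism is itself nontrivial work that you only sketch. The genuine gap is in your injectivity argument on the isotropy directions. You need that a nontrivial one-parameter subgroup of $G_p$ acting trivially on $\Tang(X,p)=\mathbb{R}^k$ fixes a set of positive $\mathfrak{m}$-measure, so as to contradict Proposition \ref{prop.fix}. You correctly identify this linearisation as the hard step, but the tools you propose do not deliver it: the bi-Lipschitz charts furnished by Theorem \ref{teo.rect} are in no way equivariant, there is no exponential map or slice theorem relating a neighbourhood of $p$ to the blow-up limit, and the ``tangent action'' of an isometry is only defined through the rescaled convergence, so knowing that it is the identity on the limit gives no control on $\Fix(\exp(tv))$ at any fixed scale. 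As written, this step is an unproved assertion, and it is precisely the point where a naive transfer of the Riemannian/Alexandrov argument to $RCD^*$ spaces breaks down.

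The paper circumvents this entirely by a global, measure-theoretic argument: it takes the kernels $\Ker\varphi_n\leq G$ of Harvey's homomorphisms, observes that $X/\Ker\varphi_n$ is again $RCD^*(K,N)$ by Theorem \ref{teo.quotient}, that a positive-dimensional compact group strictly decreases the analytic dimension of the quotient (Proposition \ref{prop.menordim}, which is where Proposition \ref{prop.fix} actually enters), and that the rescaled quotients $\lambda_nX/\Ker\varphi_n$ still converge to $(\mathbb{R}^k,d_E,\mathcal{L}^k,0)$ because the kernel acts trivially in the limit; lower semicontinuity of the analytic dimension (Theorem \ref{thm.lscdim}) then forces $\Ker\varphi_n$ to be discrete for large $n$. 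This replaces the pointwise linearisation you attempt by a statement about quotient spaces, which is exactly the tool available in this generality. Two smaller remarks: in the intended application $G$ is an isotropy group $G_{p}$, so the orbit of $p$ is a single point and your co-Lipschitz lower bound $d(p,\exp(tv)p)\geq ct$ has no content there (Definition \ref{def.locLipcoLip} is formulated on principal orbits, not at $p$); and injectivity of $d\varphi$ would in any case only give a discrete kernel, i.e.\ local injectivity, so you would still owe an argument matching the conclusion as stated.
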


\begin{proof}
Let $p\in \mathcal{R}_k,$ $\lambda_n \rightarrow \infty.$ Then we have that 
$$(\lambda_nX,d_n,\mathfrak{m}_n,p)\rightarrow (\mathbb{R}^k,d_E,\mathcal{L}^k,0) $$ in the pointed measured Gromov-Hausdorff topology. By Proposition $3.6$ of \cite{FukYam2} there exists a closed group  $H \leq \Iso(\mathbb{R}^k)$ such that
$$(\lambda_nX,d_n,p,G)\rightarrow (\mathbb{R}^k,d_E,0,H) $$ in the equivariant Gromov-Hausdorff topology. By Theorem $3.1$ of \cite{Harvey} the functions $\varphi_n: G \rightarrow H$ used in the convergence may be chosen to be Lie group homomorphisms.

Now we consider the subgroups $\Ker \varphi_n \leq G,$ which are compact.  Then by Theorem \ref{teo.quotient} the spaces $X/\Ker \varphi_n$ are still $RCD^*(K,N)$ and Proposition \ref{prop.menordim} implies that $\underline{\dim}(X/\Ker \varphi_n) \leq \underline{\dim}X =k.$

We have that  the sequence $(\lambda_n X/\Ker \varphi_n, d_n^*,\mathfrak{m}_n^*,p )$ convergences  in 
the pointed measured Gromov-Hausdorff topology to $(\mathbb{R}^k,d_E,\mathcal{L}^k,0).$ Recall that by Theorem \ref{thm.lscdim} we have

$$k= \underline{\dim}(\mathbb{R}^k,d_E, \mathcal{L}^k,0) \leq \liminf_{n \rightarrow \infty}\underline{\dim}(\lambda_n X/\Ker \varphi_n, d_n^*,\mathfrak{m}_n^*,p ), $$which implies that for large enough $n,$  $\Ker \varphi_n$ must be discrete.  So the corresponding $\varphi_n$ must  be injective. 
\end{proof}

As mentioned before, in \cite{GalKellMonSosa} an additional condition (see Definition \ref{def.locLipcoLip} ) was assumed in order to obtain information on the dimension of the tangents spaces of  $X/G.$ Here we define a weaker assumption which will be suficient for us.

\begin{defi}\label{condition.isotropy}
Let $(X,d,\mathfrak{m})$ be an $RCD^*(K,N)$ space of analytic dimension $\underline{\dim}X=k.$
A closed subgroup $G\leq \Iso(X)$ satisifes the \emph{isotropy condition} $\mathcal{I}$ if for some fixed point $p\in \mathcal{R}_k$ we have:
\begin{itemize}
\item[$(\mathcal{I})\quad$] $G_p$ acts locally Lipschitz and co-Lipschitz. 
\end{itemize}
\end{defi}

 For the remainder of this subsection we will consider closed subgroups $G$ that satisfy this condition. We need to impose this  in order to use Theorem \ref{teo.Lieinyectivo}.

\begin{prop}\label{prop.boundsisotropy}
Let $(X,d,\mathfrak{m})$ be an $RCD^*(K,N)$ space of $\underline{\dim}X = k,$ consider a closed subgroup $G \leq \Iso(X)$ satisfying the isotropy condition $\mathcal{I},$ then:

\begin{enumerate}
\item $\dim G_p \leq \frac{1}{2}k(k-1), $

\item $\dim G_p\cap G_q \leq \frac{1}{2}(k-1)(k-2),$ for $\mathfrak{m}-$a.e. $q \in X.$ 

\end{enumerate}

\end{prop}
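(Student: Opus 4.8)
The plan is to realise $G_p$ and $G_p\cap G_q$ as subgroups of orthogonal groups by blowing up at the regular point $p$, and to read the two bounds off from $\dim O(k)=\tfrac12 k(k-1)$ and $\dim O(k-1)=\tfrac12(k-1)(k-2)$. For $(1)$, since $p\in\mathcal{R}_k$ we have $\Tang(X,p)=\{(\mathbb{R}^k,d_E,\mathcal{L}^k,0)\}$, so for any $\lambda_n\to\infty$ the rescalings $(\lambda_n X,d_n,\mathfrak{m}_n,p)$ converge in the pointed measured Gromov-Hausdorff topology to $(\mathbb{R}^k,d_E,\mathcal{L}^k,0)$. The isotropy condition $\mathcal{I}$ guarantees that the compact group $G_p$ acts locally Lipschitz and co-Lipschitz, so Theorem \ref{teo.Lieinyectivo} applies: choosing a closed subgroup $H\leq\Iso(\mathbb{R}^k)$ that realises the equivariant limit, the accompanying maps $\varphi_n\colon G_p\to H$ may be taken to be injective Lie group homomorphisms. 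Because the base points are matched and every element of $G_p$ fixes $p$, the limit group $H$ fixes the origin, hence $H\leq O(k)$. Injectivity of $\varphi_n$ then yields $\dim G_p=\dim\varphi_n(G_p)\leq\dim H\leq\dim O(k)=\tfrac12 k(k-1)$.

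For $(2)$, I would first arrange a \emph{unique} geodesic from $p$ to $\mathfrak{m}$-a.e.\ $q$. The function $\varphi:=d(p,\cdot)^2$ is $c$-concave, being the $c$-transform of the function that equals $0$ at $p$ and $-\infty$ elsewhere, and one checks directly that $p\in\partial^c\varphi(x)$ for every $x$. By Lemma \ref{lem.superdiff}, for $\mathfrak{m}$-a.e.\ $q$ the superdifferential $\partial^c\varphi(q)$ is the single point $p$ and is joined to $q$ by a unique geodesic $\gamma$. Any $g\in G_p\cap G_q$ sends $\gamma$ to a geodesic from $p$ to $q$, so $g\gamma=\gamma$ by uniqueness; as $g$ fixes $p$ and preserves distances, it fixes each $\gamma_t$, so $G_p\cap G_q$ fixes $\gamma$ pointwise.

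Blowing up at $p$ once more, the geodesic $\gamma$ (whose rescaled length tends to $\infty$) converges to a unit ray $t\mapsto tv$ from the origin, for some $v\in\mathbb{S}^{k-1}\subset\mathbb{R}^k$. Restricting the injective homomorphisms $\varphi_n$ of $(1)$ to the closed subgroup $G_p\cap G_q$ produces closed subgroups $\varphi_n(G_p\cap G_q)\leq O(k)$, each of dimension exactly $\dim(G_p\cap G_q)$; passing to a Chabauty-convergent subsequence $\varphi_n(G_p\cap G_q)\to\Lambda$ is possible since the closed subgroups of $O(k)$ form a compact space. The equivariance of the $\varphi_n$ together with the pointwise fixing of $\gamma$ forces every element of $\Lambda$ to fix $v$, so $\Lambda\leq\operatorname{Stab}_{O(k)}(v)\cong O(k-1)$. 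Finally, dimension cannot drop under such a limit (the Lie algebras of $\varphi_n(G_p\cap G_q)$ subconverge in the Grassmannian to a subspace of the Lie algebra of $\Lambda$), giving $\dim(G_p\cap G_q)\leq\dim\Lambda\leq\tfrac12(k-1)(k-2)$.

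The main obstacle is the transfer in $(2)$: one must carry the pointwise fixing of $\gamma$ through the equivariant Gromov-Hausdorff limit to see that $\Lambda$ stabilises $v$, and then verify that no dimension of $G_p\cap G_q$ is lost when passing to the Chabauty limit $\Lambda\leq O(k-1)$. Part $(1)$ is comparatively immediate, since there the containment $H\leq O(k)$ is exact and no limiting of the groups themselves is required.
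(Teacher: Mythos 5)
Your proposal is correct and follows essentially the same route as the paper: blow up at the $k$-regular point $p$, use the isotropy condition together with Theorem \ref{teo.Lieinyectivo} to embed $G_p$ into a subgroup of $O(k)$, and for $(2)$ observe that $G_p\cap G_q$ fixes the unique geodesic $\gamma$ pointwise so that in the rescaled limit the image group stabilises a point of $\mathbb{S}^{k-1}$, hence sits in $O(k-1)$. The only differences are presentational: you spell out the $\mathfrak{m}$-a.e.\ uniqueness of the geodesic via the $c$-superdifferential (which the paper leaves implicit) and phrase the final containment through a Chabauty limit, whereas the paper bounds $\varphi_n(G_p\cap G_q)$ directly inside the limit group $H\leq \operatorname{Stab}(x_0)$ by the same triangle-inequality estimate.
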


\begin{proof}[Proof of $(1)$]
Consider  $\lambda_n \rightarrow \infty,$ From Proposition 3.6 of \cite{FukYam2}  there exists a subgroup $H\leq \Iso(\mathbb{R}^k)$ such that

$$(\lambda_nX,d_n,p,G_p) \rightarrow (\mathbb{R}^k,d_E,0,H) $$ in the equivariant Gromov-Hausdorff convergence. It is easy to check that the orbit $H\cdot 0 = \lbrace 0 \rbrace,$ hence $H \leq O(k).$ From Theorem \ref{teo.Lieinyectivo} we may choose $\varphi_n: G_p \rightarrow H$ to be injective. We then get the desired bound.
\end{proof}

\begin{proof}[Proof of $(2)$]
 Consider $q \in \mathcal{R}_k$ such that there exists a unique geodesic $\gamma,$ such that $\gamma_0=p$ and $\gamma_1=q.$ Notice that $\gamma$ must be fixed by $G_p\cap G_q.$

As in the previous item, for a sequence $\lambda_n \rightarrow \infty$ we have a group $H\leq \Iso(X)$ and the  convergence $(\lambda_nX,d_n,p,G_p\cap G_q) \rightarrow (\mathbb{R}^k,d_E, 0, H) $ in the equivariant sense.

Take a sequence  $t_n \in (0,1) $ such that $t_n \rightarrow 0$ and set $d(p, \gamma_{t_n})= \lambda_n^{-1}.$ Let $f_{n}$  be an $\epsilon_n-$approximation, we have that  $|1-d_E(0,f_n(\gamma_{t_{n}} ))|<\epsilon_n.$ So then the points $f_n(\gamma_{t_{n}} )$  converge to a point $x_0 \in \mathbb{S}^{k-1}.$

We will now prove that $H$ fixes $x_0.$ Let $h\in H,$ from the definition of equivariant convergence we have  functions $\psi_n: H \rightarrow G_p\cap G_q$ such that $$d_E(f_n(\psi_n(h)\gamma_{t_n} ),hf_n(\gamma_{t_n}))< \epsilon_n.$$

So then 
\begin{align*}
d_E(hx_0,x_0) &\leq d_E(hx_0,hf_n(\gamma_{t_n}) )+ d_E(hf_n(\gamma_{t_n}),x_0) \\
&\leq d_E(x_0,f_n(\gamma_{t_n}))+ d_E(hf_n( \gamma_{t_n}),f_{n}(\psi_n (h)\gamma_{t_n}))\\	
&\quad +d_E(f_n(\psi_n (h)\gamma_{t_n}), x_0) \\
                     &< 2d_E(x_0,f_{n}(\gamma_{t_n}))+\epsilon_n.  					  
\end{align*}
Which implies that $hx_0=x_0.$ We conclude then that $\varphi_{n}(G_p\cap G_q)$ must be isomorphic to a subgroup of $O(k-1).$ 
\end{proof}

\begin{teo}\label{teo.cotadim}
Let $G \leq \Iso(X)$ be a closed  subgroup satisfying the isotropy condition  $\mathcal{I}$, then  $$\dim G \leq \frac{1}{2}\kappa(\kappa+1), $$where $\kappa = \max\lbrace \dim_{T}\mathcal{R}_k, \underline{\dim}X\rbrace.$
\end{teo}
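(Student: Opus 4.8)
The plan is to combine the orbit--isotropy dimension splitting with the two bounds already in hand. Fix the point $p \in \mathcal{R}_k$ furnished by the isotropy condition $\mathcal{I}$. Since $\Iso(X)$ is a Lie group and, by \cite{Pal}, the $G$-action is proper, the orbit $G\cdot p$ is an embedded submanifold diffeomorphic to the homogeneous space $G/G_p$; consequently its manifold dimension, which coincides with its covering (topological) dimension, obeys the orbit--stabilizer identity $\dim_T(G\cdot p) = \dim(G/G_p) = \dim G - \dim G_p$. Rearranging gives $\dim G = \dim_T(G\cdot p) + \dim G_p$, and the whole proof reduces to estimating the two summands.

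For the orbit term I would invoke Proposition \ref{prop.preservstrat}: since isometries preserve the stratification and $p \in \mathcal{R}_k$, the entire orbit satisfies $G\cdot p \subseteq \mathcal{R}_k$. Monotonicity of the topological dimension then yields $\dim_T(G\cdot p) \leq \dim_T \mathcal{R}_k \leq \kappa$. For the isotropy term I would apply Proposition \ref{prop.boundsisotropy}(1), which gives $\dim G_p \leq \tfrac{1}{2}k(k-1)$; since $k = \underline{\dim}X \leq \kappa$ and $x \mapsto \tfrac{1}{2}x(x-1)$ is nondecreasing on $[1,\infty)$, this upgrades to $\dim G_p \leq \tfrac{1}{2}\kappa(\kappa-1)$.

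Adding the two estimates produces
\[
\dim G \;=\; \dim_T(G\cdot p) + \dim G_p \;\leq\; \kappa + \tfrac{1}{2}\kappa(\kappa-1) \;=\; \tfrac{1}{2}\kappa(\kappa+1),
\]
which is precisely the claimed bound.

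The step I expect to require the most care is not the arithmetic but the justification that $G\cdot p$ is a genuine submanifold of dimension $\dim G - \dim G_p$ whose covering dimension equals that value, together with the assertion that it is contained in $\mathcal{R}_k$ so that the comparison with $\dim_T \mathcal{R}_k$ is legitimate. The first point rests on properness of the action (hence compactness of the isotropy groups and the slice theorem, giving the homogeneous-space structure of the orbit), and the second is exactly Proposition \ref{prop.preservstrat}; once both are secured, everything else is elementary.
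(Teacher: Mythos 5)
Your proposal is correct and follows essentially the same route as the paper: decompose $\dim G = \dim_T(G\cdot p) + \dim G_p$ at the point $p$ from condition $\mathcal{I}$, bound the orbit term by $\kappa$ via Proposition \ref{prop.preservstrat}, and bound the isotropy term by $\frac{1}{2}k(k-1) \leq \frac{1}{2}\kappa(\kappa-1)$ via Proposition \ref{prop.boundsisotropy}(1). The only difference is that you spell out the orbit--stabilizer justification (properness of the action and the homogeneous-space structure of the orbit), which the paper takes for granted.
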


\begin{proof}
Let $k = \underline{\dim}X,$ and $p \in \mathcal{R}_k.$ As noted before, $G$ acts on $\mathcal{R}_k$ and so we have that $\dim_T G\cdot p  \leq \kappa.$  
The isotropy condition $\mathcal{I}$ is satisfied by $G_p$ so by Proposition \ref{prop.boundsisotropy}
 we conclude:
 
 $$\dim G = \dim G\cdot p +\dim_T G_p \leq \kappa + \frac{1}{2}k(k-1) \leq \frac{1}{2}\kappa(\kappa+1). $$
\end{proof}

\subsection{Homogeneous $RCD^*(K,N)$ spaces.}

We will now talk about transitive group actions on $RCD^*(K,N)$ spaces. If we have a compact group $G$ acting on $X$  then,  since the quotient measure $\mathfrak{m}^*$ has no atoms it is clear that orbits have  zero $\mathfrak{m}-$measure.  For general closed  groups this is still true as is shown in the next proposition. 

\begin{prop}\label{prop.orbposmeas}
Let $(X,d,\mathfrak{m})$ be an $RCD^*(K,N)$ space and $G\leq \Iso(X)$ a closed subgroup of isometries. If there exists an orbit $G\cdot p$ of positive $\mathfrak{m}-$measure then $G\cdot p = X.$
\end{prop}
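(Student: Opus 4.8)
The plan is to show that $G\cdot p$ is at once closed and open in $X$; since $X$ is geodesic and hence connected, this forces $G\cdot p=X$. Closedness is immediate: by Remark~\ref{rem.prinorbit} the action of the closed subgroup $G$ is proper (see \cite{Pal}), so every orbit is closed. To prepare for openness I would first record two structural facts. By the principal orbit theorem (Theorem~\ref{teo.prinorbit} together with Remark~\ref{rem.prinorbit}) the set of points lying on non-principal orbits is $\mathfrak m$-null; since $\mathfrak m(G\cdot p)>0$, the orbit $G\cdot p$ cannot sit inside this null set, so $G\cdot p$ is a principal orbit, homeomorphic to $G/G_p$ with $G_p$ compact. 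Second, by Proposition~\ref{prop.preservstrat} every isometry preserves the strata $\mathcal R_k$; as $G$ permutes $G\cdot p$ transitively, the whole orbit lies in a single stratum $\mathcal R_{k_0}$, and in particular $p\in\mathcal R_{k_0}$.

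Next I would pin down the dimension of the orbit. Write $m=\dim G-\dim G_p$ for the dimension of the homogeneous space $G/G_p$. The orbit map $g\mapsto gp$ is locally Lipschitz, since the Lie group $\Iso(X)$ acts continuously, so $G\cdot p$ is a Lipschitz image of $G/G_p$ and therefore $\dim_{\mathcal H}(G\cdot p)\le m$, while its topological dimension equals $m$; hence $\dim_{\mathcal H}(G\cdot p)=m$. On the other hand $G\cdot p\subseteq\mathcal R_{k_0}$, which is $k_0$-rectifiable by Theorem~\ref{teo.rect}, so $\dim_{\mathcal H}(G\cdot p)\le k_0$, giving $m\le k_0$. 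Since at $\mathfrak m$-a.e.\ point of $\mathcal R_{k_0}$ the reference measure is locally Ahlfors $k_0$-regular (the tangent being $(\mathbb R^{k_0},\mathcal L^{k_0})$), a positive-measure subset of $\mathcal R_{k_0}$ has positive $\mathcal H^{k_0}$-measure, so $\dim_{\mathcal H}(G\cdot p)\ge k_0$. Combining the two inequalities, $m=k_0$: the orbit is a $k_0$-dimensional manifold that fills out the local dimension of $X$.

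Finally I would prove openness. Because the $G$-action is proper, Palais' slice theorem produces, at a point $y\in G\cdot p$, a slice $S$ whose $G$-saturation is homeomorphic to $G\times_{G_y}S$; thus a neighbourhood of $y$ in $X$ is modelled on $(G/G_y)\times S$. Since $y$ is $k_0$-regular, the local topological dimension of $X$ at $y$ lies between $\dim_T(G\cdot p)=m=k_0$ and $\dim_{\mathcal H}X=k_0$, hence equals $k_0$; on the other hand it equals $\dim_T(G/G_y)+\dim_T S=k_0+\dim_T S$, forcing $\dim_T S=0$. Taking $S$ connected makes it a single point, so $G\cdot p$ is open, and by connectedness $G\cdot p=X$. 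The main obstacle is precisely this last step: passing from the infinitesimal Euclidean structure at a regular point to a genuine product neighbourhood and controlling the dimension of the transverse slice in a space that is only known to be $k_0$-dimensional and $k_0$-regular, not a manifold; this is an \emph{invariance of domain} phenomenon for the embedded $k_0$-manifold $G\cdot p$, and making it rigorous in the non-smooth setting is where the real work lies.

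When $G$ is compact this difficulty can be bypassed entirely: by Theorem~\ref{teo:A} we may assume $G$ acts by measure-preserving isometries, so by Theorem~\ref{teo.quotient} the quotient $(X/G,d^*,\mathfrak m^*)$ is again $RCD^*(K,N)$. Then $\mathfrak m^*(\{[p]\})=\mathfrak m(G\cdot p)>0$ exhibits $[p]$ as an atom of $\mathfrak m^*$; since by Theorem~\ref{teo.atoms} the measure of an $RCD^*(K,N)$ space has no atoms unless the space is a single point, $X/G$ must reduce to $[p]$, i.e.\ the action is transitive and $G\cdot p=X$.
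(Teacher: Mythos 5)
Your argument splits into a compact case and a general closed case, and it is only the latter that carries the real content of Proposition~\ref{prop.orbposmeas} (the paper dispatches the compact case in the sentence immediately preceding the proposition, exactly as you do, via Theorem~\ref{teo.quotient} and the absence of atoms from Theorem~\ref{teo.atoms}). For general closed $G$ your proof has a genuine gap, and in fact two. First, the dimension count $\dim_{\mathcal H}(G\cdot p)=\dim G/G_p$ rests on the orbit map $g\mapsto gp$ being locally Lipschitz and the orbit filling out the Hausdorff dimension of the stratum; but local (co-)Lipschitz behaviour of orbit maps is precisely the \emph{extra} hypothesis that the paper has to impose elsewhere (Definition~\ref{def.locLipcoLip} and the isotropy condition $\mathcal I$) and is not automatic for isometry groups of an $RCD^*(K,N)$ space, and the identity $\dim_{\mathcal H}X=k_0$ is not available either (the paper explicitly warns that even $\dim_{\mathcal H}\mathcal R_k\le k$ is unclear, and nothing rules out other strata or a null set of larger Hausdorff dimension). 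Second, and as you yourself concede, the openness step is an invariance-of-domain argument: from the Palais slice $G\times_{G_y}S$ you need the transverse slice $S$ to be a point, and deducing this from the equality of two dimensions in a space that is not known to be a topological manifold near $y$ is exactly the step that does not go through. A proof that announces its own key step as "where the real work lies" is not a proof of that step.

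The paper avoids all of this with a short metric argument you may want to compare with. Take $p$ an $\mathfrak m$-density point of its orbit and suppose some orbit $G\cdot y$ has $d^*(G\cdot p,G\cdot y)>0$. By Remark~\ref{rem.prinorbit} (the principal orbit theorem for closed, not necessarily measure-preserving, subgroups) one may choose $y$ so that the distance to $G\cdot p$ is realized by a unique foot point $g_yp$ and a unique geodesic $\gamma$ from $g_yp$ to $y$. The density-point lemma of Gigli--Pasqualetto (Lemma~\ref{lem.GigPas}) then produces, for $\gamma_t$ close to $g_yp$, a point $x\in G\cdot p$ with $d(x,g_y^{-1}\gamma_t)<\epsilon\, d(p,g_y^{-1}\gamma_t)$, and the triangle inequality gives $d(g_yx,y)<d(g_yp,y)=d^*(G\cdot p,G\cdot y)$, contradicting the definition of $d^*$ as an infimum over the orbit. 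This uses only that $p$ is a density point and the a.e.\ uniqueness of the projection onto the orbit, with no dimension theory, no slices, and no Lipschitz hypothesis on the action; it proves the non-compact case your argument leaves open.
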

Before giving the proof we recall Lemma $1.2$ of \cite{GigPas}:

\begin{lem}\label{lem.GigPas}
Let $(X,d,\mathfrak{m})$ be a doubling m.m.s. Let $E \in \mathcal{B}(X)$ and let $p\in X$ be an $\mathfrak{m}-$density point of $E.$ Then for all $\epsilon >0$  there exist $r>0$ such that for all 
$x \in B_{p}(r,X)$ there exists a point $y= y(x) \in E$ such that  $d(x,y) < \epsilon d(x,p).$
\end{lem}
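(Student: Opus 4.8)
The plan is to argue by contradiction, playing the defining property of a density point against the doubling condition to produce a lower bound on the relative measure of the complement of $E$ that cannot survive as the radius shrinks. Suppose the conclusion fails for some fixed $\epsilon>0$. Then for every $r>0$ there is a point $x\in B_p(r,X)$, necessarily with $x\neq p$ (the case $x=p$ being vacuous since $d(x,p)=0$), for which no $y\in E$ satisfies $d(x,y)<\epsilon d(x,p)$; equivalently $B_x(\epsilon d(x,p))\cap E=\emptyset$. Taking $r=1/n$ I would extract a sequence $x_n\to p$ with $x_n\neq p$, and set $\rho_n:=d(x_n,p)\to 0$, so that $B_{x_n}(\epsilon\rho_n)\cap E=\emptyset$ for every $n$.

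The next step is a pair of triangle-inequality inclusions which hold precisely because $d(x_n,p)=\rho_n$:
\[
B_{x_n}(\epsilon\rho_n)\subset B_p((1+\epsilon)\rho_n)\subset B_{x_n}((2+\epsilon)\rho_n).
\]
Since $(X,d,\mathfrak{m})$ is doubling (for an $RCD^*(K,N)$ space this is the local doubling of Theorem \ref{teo.atoms}, and since $\rho_n\to 0$ I may work inside a fixed neighbourhood of $p$ on which the doubling constant is uniform), the two balls centred at $x_n$ have radii differing only by the fixed factor $(2+\epsilon)/\epsilon$. Iterating the doubling inequality $k$ times, where $2^k\geq(2+\epsilon)/\epsilon$, produces a constant $C=C(\epsilon,C_D)$ independent of $n$ with $\mathfrak{m}(B_{x_n}((2+\epsilon)\rho_n))\le C\,\mathfrak{m}(B_{x_n}(\epsilon\rho_n))$.

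Combining these with the disjointness $B_{x_n}(\epsilon\rho_n)\cap E=\emptyset$ and the inclusion $B_{x_n}(\epsilon\rho_n)\subset B_p((1+\epsilon)\rho_n)$, I obtain
\[
\mathfrak{m}\big(B_p((1+\epsilon)\rho_n)\setminus E\big)\ge \mathfrak{m}(B_{x_n}(\epsilon\rho_n))\ge C^{-1}\mathfrak{m}(B_{x_n}((2+\epsilon)\rho_n))\ge C^{-1}\mathfrak{m}(B_p((1+\epsilon)\rho_n)).
\]
Hence the relative measure of $E$ in $B_p((1+\epsilon)\rho_n,X)$ is at most $1-C^{-1}<1$ for every $n$. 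As $(1+\epsilon)\rho_n\to 0$, this contradicts the hypothesis that $p$ is an $\mathfrak{m}$-density point of $E$, which forces those ratios to tend to $1$, and the lemma follows.

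The part requiring genuine care is extracting a doubling constant $C$ that is uniform in $n$ from the merely local doubling property: one must confine the argument to small scales near $p$, where a single doubling constant governs all the balls $B_{x_n}(\,\cdot\,)$ that appear. Once that uniformity and the scale-invariant inclusions are in place, the remaining steps are a direct measure comparison, so I expect the bookkeeping around the local-to-uniform doubling passage to be the only real obstacle.
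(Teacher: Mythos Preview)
The paper does not prove this lemma; it is simply recalled from \cite{GigPas} (Lemma~1.2 there) and used as a black box in the proof of Proposition~\ref{prop.orbposmeas}. Your argument is correct and is essentially the standard proof: the triangle-inequality inclusions and the iteration of doubling to compare $\mathfrak{m}(B_{x_n}(\epsilon\rho_n))$ with $\mathfrak{m}(B_{x_n}((2+\epsilon)\rho_n))$ are exactly the ingredients one expects, and your chain of inequalities yielding a uniform lower bound $C^{-1}$ on the relative measure of the complement is clean. Your caution about the local-versus-global doubling constant is well placed and correctly handled by restricting to small scales near $p$; nothing further is needed.
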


\begin{proof}
We proceed by contradiction. First of all notice that if $\mathfrak{m}(G\cdot p)>0$ then $G\cdot p \subset \mathcal{R}_k$ for some $k \in [1,N]\cap \mathbb{N}.$ Without loss of generality we may assume that $p$ is a $\mathfrak{m}-$density point of its orbit.

By  Remark \ref{rem.prinorbit} of the principal orbit theorem we can take $y \in X$ such that there exists a unique $g_y p \in G\cdot p $ such that  $d(g_y p, y )=d^*(G\cdot p, G\cdot y).$ Furthermore these two points are joined by a unique geodesic $\gamma.$

Let $0<\epsilon<1,$ and $t \in (0,1]$ such that $d(p, g_y^{-1}\gamma_t)<r,$ where $r$ is the one given by Lemma \ref{lem.GigPas}.  By this same Lemma we can find a point $x \in B_p(r,X)\cap G\cdot p$ such that  $d(x, g_y^{-1}\gamma_t)< \epsilon d(p, g_y^{-1}\gamma_t)$ Therefore  $$d(x,g_y^{-1}y)< d(p,g_y^{-1}y)=d^*(G\cdot p,G\cdot y), $$ and we have a contradiction.
\end{proof}

We have this immediate corollary.

\begin{cor}
Suppose that a closed subgroup $G\leq \Iso(X)$ acts transitively on some strata $\mathcal{R}_k$ such that $\mathfrak{m}(\mathcal{R}_k)>0.$ Then $\mathcal{R}_k=X.$ 
\end{cor}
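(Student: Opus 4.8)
The plan is to deduce this directly from Proposition \ref{prop.orbposmeas}, since the hypothesis of transitivity collapses the entire stratum into a single orbit. The statement that $G$ acts transitively on $\mathcal{R}_k$ means exactly that $\mathcal{R}_k$ consists of one $G$-orbit; in particular $\mathcal{R}_k$ is invariant under $G$ (any isometry preserves the stratification by Proposition \ref{prop.preservstrat}, so this is consistent), and for any chosen point it is the whole orbit of that point.

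First I would observe that $\mathfrak{m}(\mathcal{R}_k)>0$ forces $\mathcal{R}_k$ to be nonempty, so I may pick a point $p\in\mathcal{R}_k$. By transitivity of the action on $\mathcal{R}_k$, every other point of $\mathcal{R}_k$ lies in the orbit of $p$, and since $G$ preserves $\mathcal{R}_k$ the orbit stays inside $\mathcal{R}_k$; hence $G\cdot p=\mathcal{R}_k$. Consequently the orbit $G\cdot p$ has $\mathfrak{m}(G\cdot p)=\mathfrak{m}(\mathcal{R}_k)>0$, i.e. we have produced an orbit of positive $\mathfrak{m}$-measure.

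Finally I would invoke Proposition \ref{prop.orbposmeas}: since the closed subgroup $G\leq\Iso(X)$ admits an orbit $G\cdot p$ of positive measure, that proposition yields $G\cdot p=X$. Combining with $G\cdot p=\mathcal{R}_k$ gives $\mathcal{R}_k=X$, as desired.

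There is essentially no hard step here: the entire analytic difficulty — the density-point argument via Lemma \ref{lem.GigPas} together with the principal orbit theorem (Remark \ref{rem.prinorbit}) — has already been carried out inside the proof of Proposition \ref{prop.orbposmeas}. The only point deserving a word of care is the bookkeeping identification $G\cdot p=\mathcal{R}_k$, which rests on interpreting ``acts transitively on $\mathcal{R}_k$'' as ``$\mathcal{R}_k$ is a single orbit''; once that is granted, the corollary is an immediate consequence.
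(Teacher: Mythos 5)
Your proposal is correct and is exactly the intended argument: the paper states this as an immediate corollary of Proposition \ref{prop.orbposmeas} without writing out a proof, and your reduction (transitivity plus Proposition \ref{prop.preservstrat} gives $G\cdot p=\mathcal{R}_k$, which then has positive measure, so $G\cdot p=X$) is the same one-line deduction. No issues.
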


\begin{prop}\label{prop.homogeneous}
Let $(X,d,\mathfrak{m})$ be a homogeneous $RCD^*(K,N)$ space. Then $(X,d)$ is isometric to a Riemannian manifold and $\underline{\dim}X= \dim_T X.$ 
\end{prop}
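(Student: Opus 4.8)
The plan is to first leverage homogeneity to collapse the stratification to a single regular stratum, then to produce a manifold structure from the transitive Lie group action, and finally to exclude sub-Riemannian and genuinely Finsler geometries by means of the curvature-dimension condition.

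First I would set $k=\underline{\dim}X$, so that $\mathfrak m(\mathcal R_k)>0$ and in particular $\mathcal R_k\neq\emptyset$ by Theorem \ref{teo.rect}. Picking $p\in\mathcal R_k$ and using that $\Iso(X)$ acts transitively together with Proposition \ref{prop.preservstrat}, every $x\in X$ is of the form $gp$ for some $g\in\Iso(X)$ and hence lies in $\mathcal R_k$; thus $X=\mathcal R_k$, and the tangent cone is $\mathbb R^k$ at every point. Writing $G=\Iso(X)$, which is a Lie group, and $H=G_p$ for the isotropy group (compact because the action is proper, cf.\ Remark \ref{rem.prinorbit}), I identify $X=G/H$ as a homogeneous space. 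Since $(X,d)$ is geodesic it is connected and, small balls being geodesically star-shaped about their centre, locally connected; it is locally compact by properness (Theorem \ref{teo.atoms}) and has finite topological dimension $\dim_T X\le N$.

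Next I would invoke the Montgomery--Zippin solution of Hilbert's fifth problem for transitive actions: a finite-dimensional, locally compact, connected, locally connected space on which a Lie group acts transitively (and effectively, after quotienting by the ineffective kernel) is a topological manifold, and $G/H$ carries its canonical smooth structure. To pin the dimension, I note that at the $k$-regular point $p$ the blow-ups converge to $\mathbb R^k$, so by the local bi-H\"older charts available near points with unique Euclidean tangent a neighbourhood of $p$ is homeomorphic to an open subset of $\mathbb R^k$; invariance of domain then forces $\dim_T X=k=\underline{\dim}X$, which is the second assertion, and by homogeneity the chart may be transported everywhere.

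Finally, for the Riemannian conclusion I would follow the strategy of the Alexandrov case \cite{GalGui}. By Berestovskii's structure theorem for homogeneous locally compact length spaces, the $G$-invariant metric $d$ on $G/H$ is a Carnot--Carath\'eodory--Finsler metric, determined by an $\operatorname{Ad}(H)$-invariant norm on a bracket-generating complement of $\mathfrak h$ in $\mathfrak g$. The main obstacle is to rule out the two non-Riemannian possibilities, and here the $RCD^*(K,N)$ hypothesis does the decisive work: a genuinely sub-Riemannian structure cannot satisfy any curvature-dimension bound, so the defining distribution must be the full tangent space and $d$ is Finsler; and a Finsler metric is infinitesimally Hilbertian only when it is Riemannian, so the invariant norm comes from an inner product. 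Hence $(X,d)$ is isometric to a homogeneous Riemannian manifold, completing the proof. I expect the delicate points to be the verification of the Montgomery--Zippin hypotheses and, above all, the exclusion of sub-Finsler geometry from the curvature-dimension condition.
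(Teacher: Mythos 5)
Your outline is essentially the paper's: collapse the stratification to $X=\mathcal{R}_k$ by homogeneity and Proposition \ref{prop.preservstrat}, invoke Berestovski\u{\i}'s structure theorem to write $(X,d)$ as $(G'/H',d_{cc})$ with a Carnot--Carath\'eodory--Finsler metric, and then kill the Finsler possibility with infinitesimal Hilbertianity via Proposition \ref{prop.isominfHilb}. The Montgomery--Zippin detour is harmless but redundant once Berestovski\u{\i} is in play, since his theorem already delivers the coset-space presentation.

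The one point where you diverge, and where you yourself flag the proof as incomplete, is the exclusion of the genuinely sub-Riemannian case. You propose to rule it out by the assertion that ``a genuinely sub-Riemannian structure cannot satisfy any curvature-dimension bound.'' That statement is true in known examples (e.g.\ Juillet's result for the Heisenberg group) but as a general theorem about arbitrary homogeneous Carnot--Carath\'eodory metrics it is a substantial result that you cannot simply cite here, and the paper does not use it. The paper's resolution is much cheaper and is already contained in your first paragraph: since every point of $X$ is $k$-regular, every blow-up of $(G'/H',d_{cc})$ is $(\mathbb{R}^k,d_E)$. But if the horizontal distribution were proper, the Gromov--Hausdorff tangent at each point would be a nonabelian Carnot group (equivalently, the Hausdorff dimension would strictly exceed the topological dimension), which is not isometric to a Euclidean space. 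Hence the distribution is the whole tangent space and $d_{cc}$ is Finsler, after which your infinitesimal-Hilbertianity argument finishes the proof. So the ``delicate point'' you worry about is not delicate: you already established exactly the fact that disposes of it, and you should route the argument through the tangent cones rather than through an unproved general non-CD statement for sub-Riemannian spaces. The equality $\underline{\dim}X=\dim_T X$ then falls out of the manifold structure as in your invariance-of-domain remark, or simply from the Riemannian conclusion as in the paper.
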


\begin{proof}
Let $k=\underline{\dim}X.$   The fact that $(X,d)$ is homogeneous implies that it is locally compact, locally contractible and the metric  $d$ is intrinsic. From Theorem $3$ of \cite{Ber2} there exists a connected Lie group $G',$ and  a compact subgroup $H'\leq G'$ such that $(X,d)$ is isometric to $(G'/H',d_{cc})$ where $d_{cc}$ is a Carnot-Caratheodory-Finsler metric.

Observe that all points in $(X,d,\mathfrak{m})$ are $k-$regular. This implies that all the tangent spaces of $G'/H'$ are Euclidean spaces, so then $(G'/H',d_{cc})$ must be a Finsler manifold.

Let $g: X \rightarrow G'/H'$ be an isometry. Since $(X,d,\mathfrak{m})$ is infinitesimally Hilbertian 
then by Proposition \ref{prop.isominfHilb} $(G'/H',d_{cc},g_{\#}\mathfrak{m})$ must also be infinitesimally Hilbertian. With this we get that $(G'/H',d_{cc})$ must actually be a Riemannian manifold. Finally notice that this implies that $\underline{\dim}X=\dim_{T}X.$ 
\end{proof}

\section{Dimension Gaps. }
In this section we will prove the main results of the paper, namely the dimensional gaps that occur for closed subgroups of $\Iso(X).$ For readability we divide the proof of the first of these results into several Lemmas. First we recall the statement of the theorem:

\begin{thmp}\label{teo:B}
Let $(X,d,\mathfrak{m})$ be an $RCD^*(K,N)$ space,  such that $\underline{\dim}X =k \neq 4.$ Then there are no closed subgroups  $G \leq \Iso(X),$ satisfying the isotropy condition $\mathcal{I}$, and whose dimension lies in the interval:

$$\frac{1}{2}\kappa (\kappa-1)+1    <   \dim G <\frac{1}{2}\kappa(\kappa+1)  ,$$where  $\kappa = \max\lbrace \dim_{T}\mathcal{R}_k, \underline{\dim}X\rbrace. $

\end{thmp}

We will proceed by contradiction. Let $p\in \mathcal{R}_k$ be the point mentioned in the isotropy condition $\mathcal{I}$ (see Definition \ref{condition.isotropy}).

\begin{lem}
 $\dim G_p = \frac{1}{2}k(k-1)$
\end{lem}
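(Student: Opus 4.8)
The plan is to squeeze $\dim G_p$ between $\frac{1}{2}(k-1)(k-2)$ and $\frac{1}{2}k(k-1)$ and then invoke the classical dimension gap for closed subgroups of the orthogonal group $O(k)$, which forbids any dimension strictly between these two values precisely when $k\neq 4$.

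First I would record the orbit--stabilizer identity. The action of $G$ is proper (Remark \ref{rem.prinorbit}), so $G_p$ is compact and the orbit $G\cdot p\cong G/G_p$ is an embedded submanifold with $\dim(G\cdot p)=\dim G-\dim G_p$, exactly the identity used in the proof of Theorem \ref{teo.cotadim}. By Proposition \ref{prop.preservstrat} the orbit $G\cdot p$ is contained in $\mathcal{R}_k$, so $\dim_T(G\cdot p)\leq \dim_T\mathcal{R}_k\leq \kappa$, and therefore $\dim G_p\geq \dim G-\kappa$.

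Next, under the contradiction hypothesis $\dim G>\frac{1}{2}\kappa(\kappa-1)+1$, the previous inequality yields
\[
\dim G_p>\frac{1}{2}\kappa(\kappa-1)+1-\kappa=\frac{1}{2}(\kappa-1)(\kappa-2).
\]
Since $\kappa\geq \underline{\dim}X=k$ and $t\mapsto(t-1)(t-2)$ is non-decreasing on the relevant range, this gives $\dim G_p>\frac{1}{2}(k-1)(k-2)$. On the other hand, the proof of Proposition \ref{prop.boundsisotropy}(1) (through the injective homomorphisms produced by Theorem \ref{teo.Lieinyectivo}) realizes $G_p$ as a compact, hence closed, subgroup of $O(k)$ and shows $\dim G_p\leq\frac{1}{2}k(k-1)$.

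Finally I would apply the classical subgroup gap for $O(k)$: for $k\neq 4$ there is no closed subgroup whose dimension lies strictly in $\left(\frac{1}{2}(k-1)(k-2),\frac{1}{2}k(k-1)\right)$, the largest proper closed subgroup being conjugate to $O(k-1)$. As $\dim G_p$ lies in $\left(\frac{1}{2}(k-1)(k-2),\frac{1}{2}k(k-1)\right]$, it must equal $\frac{1}{2}k(k-1)$. The main obstacle is this last step: one must cite the correct dimension-gap theorem for $O(k)$ and confirm that the only exceptional case is $k=4$ (where, e.g., $U(2)\subset SO(4)$ falls in the forbidden range), which is precisely why the hypothesis $k\neq 4$ is imposed. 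A secondary point needing care is that $G_p$ is genuinely isomorphic to a \emph{closed} subgroup of $O(k)$, so that the gap theorem applies verbatim; this is ensured by the compactness of $G_p$ together with the injectivity of the homomorphism into $O(k)$ from Theorem \ref{teo.Lieinyectivo}.
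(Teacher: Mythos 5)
Your proposal is correct and follows essentially the same route as the paper: both derive the lower bound $\dim G_p>\frac{1}{2}(\kappa-1)(\kappa-2)$ from the orbit--stabilizer identity together with $\dim_T G\cdot p\leq\kappa$ under the contradiction hypothesis, embed $G_p$ into $O(k)$ via the injective homomorphism of Theorem \ref{teo.Lieinyectivo}, and invoke the dimension-gap lemma for closed subgroups of $O(k)$ with $k\neq 4$ (the Lemma of Kobayashi, p.~48, in the paper) to force $\dim G_p=\frac{1}{2}k(k-1)$. The only difference is cosmetic: you make explicit the monotonicity step passing from the bound in $\kappa$ to the bound in $k$, which the paper leaves implicit.
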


\begin{proof}
We have the following

$$\frac{1}{2}\kappa(\kappa-1)+1 < \dim G= \dim G\cdot p +\dim G_p \leq \kappa+\dim G_p $$Therefore 

$$\dim G_p > \frac{1}{2}(\kappa-1)(\kappa-2)+1. $$ The group $G_p$ satisfies the hypothesis of Theorem \ref{teo.Lieinyectivo}, so we can find an injective Lie group homomorphism $\varphi: G_p \rightarrow O(k).$

This implies that $\varphi (G_p)$ is a subgroup of $O(k)$ whose  dimension is strictly larger than $\frac{1}{2}(k-1)(k-2)+1.$ Since $k\neq 4,$   Lemma of \cite[p. 48]{Koba} it shows that $\varphi(G_p)= O(k)$ or $\varphi(G_p)= SO(k).$ In either case the dimension of $G_p$ equals $\frac{1}{2}k(k-1).$  
\end{proof}

\begin{lem}
$\underline{\dim}X/G_p \leq 1,$ and $\mathfrak{m}(X-\mathcal{R}_k)=0.$
\end{lem}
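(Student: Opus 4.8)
The plan is to prove both assertions together by applying Theorem $6.3$ of \cite{GalKellMonSosa} to the action of the (compact) isotropy group $G_p$ on $X$. Since $G_p$ satisfies the isotropy condition $\mathcal{I}$, it acts locally Lipschitz and co-Lipschitz on its principal orbits, so that theorem provides, for a regular point $x_0$ with principal $G_p$-orbit, the decomposition $n = \dim_{T}(G_p\cdot x_0) + n^*$, where $n$ is the Euclidean dimension of $\Tang(X,x_0)$ and $n^*$ that of $\Tang(X/G_p, G_p\cdot x_0)$. As $G_p$ is compact we may assume it preserves $\mathfrak{m}$, so by Theorem \ref{teo.quotient} the quotient $(X/G_p,d^*,\mathfrak{m}^*)$ is again $RCD^*(K,N)$; moreover it is not a single point, since $G_p$ fixes $p$ and $X\neq\{p\}$.

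First I would bound a principal $G_p$-orbit from below. By the previous lemma $\dim G_p=\frac{1}{2}k(k-1)$, and by Proposition \ref{prop.boundsisotropy}(2) the isotropy $(G_p)_{x_0}=G_p\cap G_{x_0}$ satisfies $\dim(G_p)_{x_0}\le \frac{1}{2}(k-1)(k-2)$ for $\mathfrak{m}$-a.e. $x_0$. Hence for a.e. $x_0$ with principal orbit,
$$\dim_{T}(G_p\cdot x_0)=\dim G_p-\dim(G_p)_{x_0}\ge \tfrac{1}{2}k(k-1)-\tfrac{1}{2}(k-1)(k-2)=k-1.$$
Next, for $\mathfrak{m}$-a.e. $x_0$ Theorem \ref{teo.rect} gives a stratum $n\le k$ with $x_0\in\mathcal{R}_n$, and by Remark \ref{rem.prinorbit} its orbit is principal, so Theorem $6.3$ applies and $G_p\cdot x_0$ is an $n^*$-regular point of $X/G_p$. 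Combining, $n^*=n-\dim_{T}(G_p\cdot x_0)\le k-(k-1)=1$. Because $\mathfrak{m}^*=\pi_\#\mathfrak{m}$, these orbits exhaust $\mathfrak{m}^*$-a.e. point of $X/G_p$, so $\mathfrak{m}^*(\mathcal{R}_m(X/G_p))=0$ for every $m\ge 2$; this is precisely $\underline{\dim}X/G_p\le 1$.

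Finally, since $X/G_p$ is a geodesic space with more than one point, the tangent at a regular point is a nontrivial cone, so $n^*\ge 1$ at a.e. orbit, forcing $n^*=1$ a.e. Feeding this back, $n=\dim_{T}(G_p\cdot x_0)+1\ge (k-1)+1=k$, while $n\le k$ because $\underline{\dim}X=k$; hence $n=k$ for $\mathfrak{m}$-a.e. $x_0$, which gives $\mathfrak{m}(X-\mathcal{R}_k)=0$. The delicate points I expect are checking that the hypotheses of Theorem $6.3$ (regularity of $x_0$ in $X$ together with principality of the orbit) hold on a set of full $\mathfrak{m}$-measure, and the passage between "$\mathfrak{m}$-a.e. in $X$'' and "$\mathfrak{m}^*$-a.e. in $X/G_p$''; it is here that the quotient-measure compatibility and the exclusion of the degenerate value $n^*=0$ must be argued with care.
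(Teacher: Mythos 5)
Your argument is correct and follows essentially the same route as the paper: the lower bound $\dim_T(G_p\cdot x_0)\ge k-1$ coming from Proposition \ref{prop.boundsisotropy}, fed into the dimension splitting $n=\dim_T(G_p\cdot x_0)+n^*$ of Theorem $6.3$ of \cite{GalKellMonSosa} applied to the $RCD^*(K,N)$ quotient $X/G_p$. The only cosmetic difference is that the paper pins down $n^*=1$ by first identifying $X/G_p$ as an interval via Theorem \ref{teo.lowdim}, whereas you obtain $n^*\ge 1$ directly from Mondino--Naber regularity of the quotient; both yield the same contradiction for strata $\mathcal{R}_l$ with $l<k$.
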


\begin{proof}
Let $q \in \mathcal{R}_k$ such that  there exists a unique geodesic $\gamma$ between them and the orbit  $G_p\cdot q$ is  principal.By Proposition \ref{prop.boundsisotropy} we obtain the upper bound $\dim \left(G_p \cap G_q\right) \leq \frac{1}{2}(k-1)(k-2)$ so we have
\begin{align*}
\frac{1}{2}k(k-1)=\dim G_p &= \dim G_p\cdot q+\dim G_p\cap G_q \\
										 &\leq \dim G_p\cdot q + \frac{1}{2}(k-1)(k-2). 
\end{align*}
 Which implies that  $\dim G_p\cdot q \geq k-1.$ Since $G_p$ is compact and acts by measure preserving isometries then $(X/G_p, d^*,m^*)$ is still an $RCD^*(K,N)$ space. Let us denote by $n^*$ the dimension of the Euclidean space contained in $\Tang(X/G_p, G_p \cdot q).$ By Theorem $6.3$ of \cite{GalKellMonSosa} and the previous Lemma 
 
 $$k = \dim G_p\cdot q+n^* \geq k-1 + n^*.$$ Hence $n^* \leq 1.$ This implies that $G_p\cdot q$ is a $1-$regular point in $X/G_p.$ Using Theorem \ref{teo.lowdim} we deduce that $X/G_p$ must be isometric to a circle, to $\mathbb{R}$ or to an interval (possibly with boundary).  Observe that $p$ is fixed by $G_p$ and therefore it belongs to the boundary of the quotient space, this implies that $X/G_p$ must be isometric to $[0,a)$ $a \in \mathbb{R}^{+}\cup \lbrace \infty \rbrace$ or to $[0,b]$ $b \in \mathbb{R}^{+}. $
In either case $\underline{\dim}X/G_p =1.$ 

Now consider $l<k$ such that $\mathfrak{m}(\mathcal{R}_l)>0.$ Take a point $y \in \mathcal{R}_l$ such that $G_p\cdot y$ is principal. Then by Theorem $6.3$ of \cite{GalKellMonSosa}   $\dim G_p\cdot y = l-1.$ On the other hand we have seen that $\dim G_p\cdot y \geq k-1,$ so it must follow that $l\geq k$ and we have a contradiction. 
\end{proof}

\begin{lem}
Let $q \in X.$  Then  $G_p$  acts transitively on $\partial B_p(d(p,q),X ).$
\end{lem}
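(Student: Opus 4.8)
The plan is to read off the transitivity directly from the description of the quotient $X/G_p$ established in the previous Lemma, using crucially that $p$ is a fixed point of $G_p$. First I would record that, since $G_p\cdot p=\lbrace p\rbrace$, the quotient distance from the class of $p$ to the class of any $x$ collapses to an ordinary distance in $X$: for every $g\in G_p$ the isometry $g$ fixes $p$, so $d(p,gx)=d(g^{-1}p,x)=d(p,x)$, and therefore
$$ d^*(G_p\cdot p,\,G_p\cdot x)=\inf_{g\in G_p} d(p,gx)=d(p,x). $$
In words, the distance in $X/G_p$ from $[p]$ to $[x]$ equals $d(p,x)$.

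By the previous Lemma, $X/G_p$ is isometric to $[0,a)$ with $a\in\mathbb{R}^{+}\cup\lbrace\infty\rbrace$, or to $[0,b]$ with $b\in\mathbb{R}^{+}$, and under this isometry the class $G_p\cdot p$ corresponds to the boundary point $0$. The coordinate of a point $s$ in such an interval is precisely its distance to the boundary point $0$, so composing with the isometry onto the interval shows that the assignment $G_p\cdot x\mapsto d^*(G_p\cdot p,G_p\cdot x)=d(p,x)$ is an isometry of $X/G_p$ onto the interval. In particular it is injective on orbits: two points $x,y\in X$ lie in the same $G_p$-orbit if and only if they have the same image in $X/G_p$, which by the previous paragraph happens if and only if $d(p,x)=d(p,y)$.

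From this injectivity the conclusion is immediate. The orbit through any point $x$ is exactly the set of points equidistant from $p$, that is $G_p\cdot x=\lbrace y\in X : d(p,y)=d(p,x)\rbrace=\partial B_p(d(p,x),X)$. Applying this with $x=q$ yields $\partial B_p(d(p,q),X)=G_p\cdot q$, so $G_p$ acts transitively on $\partial B_p(d(p,q),X)$, as claimed.

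The only point demanding care—and the step I expect to be the main (albeit minor) obstacle—is the verification that the quotient distance from the fixed point $p$ genuinely coincides with $d(p,\cdot)$ and that the interval structure forces the orbit map $[x]\mapsto d(p,x)$ to be injective. Once these two facts are in place there is nothing left to prove, since transitivity on a distance sphere is literally the assertion that each such sphere is a single $G_p$-orbit.
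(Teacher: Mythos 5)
Your proof is correct and follows essentially the same route as the paper's: both arguments rest on the facts that $d^*(G_p\cdot p, G_p\cdot u)=d(p,u)$ because $p$ is fixed by $G_p$, and that the quotient $X/G_p$ is an interval with $G_p\cdot p$ at an endpoint, so that distance to that endpoint determines the orbit and the whole sphere $\partial B_p(d(p,q),X)$ collapses to the single class $G_p\cdot q$. The paper phrases the last step as $d^*(G_p\cdot q,G_p\cdot u)=0$ for all $u$ on the sphere, while you phrase it as injectivity of $[x]\mapsto d(p,x)$; these are the same observation.
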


\begin{proof}
By the previous Lemma we know that $X/G_p$ is an interval.  Observe that $$d(p,u) = d^*(G_p\cdot p, G_p\cdot u)\quad \text{ for all } u \in \partial B_p (d(p,q)).$$ It is clear that $d^*(G_p\cdot q,G_p\cdot u) =0$ for all points $u$ in the boundary of the ball. So then the action  of $G_p $ must be transitive.
\end{proof}

\begin{lem}
The action of $G$ on X is transitive.
\end{lem}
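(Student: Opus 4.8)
The plan is to reduce transitivity to a positive‑measure statement and then invoke Proposition~\ref{prop.orbposmeas}: it suffices to show that the single orbit $G\cdot p$ has positive $\mathfrak{m}$‑measure, for that proposition then forces $G\cdot p = X$, which is exactly transitivity. The work therefore lies in producing a set of positive measure inside one orbit, and the tool for this is the combination of the two preceding Lemmas with a connectedness argument.

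First I would record the geometric picture supplied by the preceding Lemmas. Since $X/G_p$ is an interval with $p$ corresponding to the endpoint $0$, and since $G_p$ acts transitively on every metric sphere $\partial B_p(r,X)$, the $G_p$‑orbit map is identified with the distance function $\rho := d(p,\cdot)\colon X \to X/G_p$, whose fibres are precisely the spheres centred at $p$. Because $G_p \leq G$, the orbit $G\cdot p$ is $G_p$‑saturated, hence a union of complete spheres; that is, $G\cdot p = \rho^{-1}(S)$ where $S := \rho(G\cdot p)$ is a subset of the interval $X/G_p$ containing $0$.

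Next I would exhibit a nondegenerate subinterval of $S$ using the identity component $G_0$. From $\dim G > \tfrac{1}{2}\kappa(\kappa-1)+1 \geq \tfrac{1}{2}k(k-1) = \dim G_p$ (recall $\kappa \geq k$), the orbit $G\cdot p$ is positive‑dimensional, and since $G_0\cap G_p$ is open in $G_p$ one gets $\dim(G_0\cdot p) = \dim G_0 - \dim G_p = \dim(G\cdot p) > 0$; thus $G_0$ does not fix $p$, and there is $g \in G_0$ with $r_0 := d(p,gp) > 0$. The set $G_0\cdot p$ is connected and contains both $p$ (with $\rho = 0$) and $gp$ (with $\rho = r_0$), so its continuous image $\rho(G_0\cdot p)$ is a connected subset of the interval $X/G_p$ meeting $0$ and $r_0$, whence $[0,r_0]\subseteq S$. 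It then follows that $G\cdot p = \rho^{-1}(S) \supseteq \rho^{-1}([0,r_0]) = \bar{B}_p(r_0,X)$, a closed ball of positive radius, which has positive $\mathfrak{m}$‑measure. Proposition~\ref{prop.orbposmeas} now yields $G\cdot p = X$.

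The main obstacle is bridging the \emph{angular} transitivity already in hand (transitivity of $G_p$ on each sphere) with genuine transitivity in the \emph{radial} direction: a priori the orbit $G\cdot p$ could consist of a single sphere. The intermediate‑value argument applied to the connected set $G_0\cdot p$ is exactly what forces the orbit to sweep out a full radial interval, and hence a ball; once a ball of positive measure sits inside an orbit, Proposition~\ref{prop.orbposmeas} upgrades it to all of $X$. I would take care only to confirm that $\mathfrak{m}$ assigns positive mass to balls of positive radius, which holds since the reference measure of an $RCD^*(K,N)$ space is locally doubling with full support.
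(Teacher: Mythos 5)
Your proof is correct and follows essentially the same route as the paper: both arguments combine the positive dimension of $G\cdot p$ (giving, via connectedness/intermediate values, points of the orbit at every distance $r\in[0,r_0]$ from $p$) with the transitivity of $G_p$ on metric spheres to conclude that a full ball about $p$ lies inside the orbit. The only difference is the last step — the paper concludes from openness of the orbit and connectedness of $X$, while you invoke Proposition~\ref{prop.orbposmeas} via positivity of the measure of the ball — and both closings are valid.
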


\begin{proof}
We will prove that $G\cdot p $ has non-empty interior. First observe that 

$$\dim G\cdot p = \dim G -\dim G_p > \frac{1}{2}k(k-1)+1-\frac{1}{2}k(k-1) =1. $$
So we can consider a curve $\sigma : [0,1]\rightarrow X$ such that $\sigma (0)=p$ and $\sigma (t) \in G\cdot p$ for all $t \in [0,1].$

We define the function $\Lambda : [0,1] \rightarrow \mathbb{R}^{+},$  $\Lambda (t)=d(p,\sigma (t)).$ This function is continuous and achieves a maximum at say $R>0.$  Then for all $r \in (0,R)$ there exists $t_r \in [0,1]$  such that $d(p,\sigma (t_r))=r.$  It follows then that $\partial B_p(r,X)\subset G\cdot p$ and then $B_p(R,X)\subset G\cdot p.$  Since $G$ acts transitively on $G\cdot p$ then the orbit is open.
Now $X$ is connected and so we conclude that $G\cdot p =X.$
\end{proof}

From the last Lemma it follows that $X = G\cdot p$ and then $(X,d,\mathfrak{m})$ is homogeneous. Therefore using Proposition \ref{prop.homogeneous} we get that $k= \kappa.$ So then

$$\dim G =\dim_T X +\dim G_p = \frac{1}{2}k(k+1), $$ and we obtain a contradiction. We have then proved theorem $B.$
 
\begin{thmp}\label{teo:C}
Let $(X,d,\mathfrak{m})$ be an $RCD^*(K,N)$ space,  such that $\underline{\dim}X =k \neq 4.$ If  $\Iso(X)$ has dimension $\frac{1}{2}\kappa(\kappa-1)+1,$ where  $\kappa = \max\lbrace \dim_{T}\mathcal{R}_k, \underline{\dim}X\rbrace, $    and satisfies the isotropy condition $\mathcal{I}$ then $(X,d)$ is isometric to a Riemannian manifold.

\end{thmp}

\begin{proof}
By Proposition \ref{prop.homogeneous} it suffices to check that the action of the isometry group is transitive. Notice that $k\leq \kappa$ so then

$$\dim \Iso(X)_p \geq \frac{1}{2}\kappa(\kappa-1)+1 \geq \frac{1}{2}(k-1)(k-2)+1. $$

So we proceed exactly as in the proof of Theorem $B$ and obtain that the action is transitive.
\end{proof}

We also have an extension of Mann's gap theorem \cite{Mann}.

\begin{teo}
Let $(X,d,\mathfrak{m})$ be an $RCD^*(K,N)$ space of dimension $\underline{\dim}X=k$ and such that $\kappa: = \max\lbrace k, \dim_T \mathcal{R}_k \rbrace\neq 4,6,10.$ Then the isometry group $\Iso(X)$ has no compact subgroup $G$ such that the dimension of $G$ falls in the ranges:
$$ {{\kappa-l+1}\choose{2}} + {{l+1}\choose{2}}  <\dim G< {{\kappa-l+2}\choose{2}}, \quad l\geq 1.   $$

\end{teo}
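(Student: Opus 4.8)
The plan is to argue by contradiction and to reduce the statement, through the blow-up at a regular point, to the classical gap theorem of Mann \cite{Mann} for closed subgroups of the orthogonal group. As in Theorems \ref{teo:B} and \ref{teo:C}, I would first use Theorem \ref{teo:A} to replace $\mathfrak{m}$ by a $G$-invariant measure so that the compact group $G$ acts by measure-preserving isometries and the quotients below remain $RCD^*(K,N)$ by Theorem \ref{teo.quotient}. Fixing a point $p\in\mathcal{R}_k$ at which the isotropy condition holds, I would decompose
\[
\dim G=\dim G\cdot p+\dim G_p,\qquad \dim_T G\cdot p\le\kappa,
\]
the orbit bound coming from the fact that $G$ preserves the stratification (Proposition \ref{prop.preservstrat}), so that $G\cdot p\subset\mathcal{R}_k$. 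Theorem \ref{teo.Lieinyectivo} then yields an injective Lie group homomorphism $\varphi\colon G_p\hookrightarrow O(k)\subseteq O(\kappa)$, because the blow-up at $p$ is the round $\mathbb{R}^k$ and the induced linear isotropy action fixes the origin. This realizes $G_p$ as a closed subgroup of an orthogonal group, converting any constraint on $\dim G$ into one on $\dim\varphi(G_p)$ after subtracting the orbit dimension.

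The algebraic engine is Mann's gap theorem for orthogonal groups: for $m\ne 4,6,10$ a closed subgroup $H\le O(m)$ cannot have
\[
\binom{m-l}{2}+\binom{l}{2}<\dim H<\binom{m-l+1}{2}
\]
for any $l\ge1$. I would apply this with $m=\kappa$ to $\varphi(G_p)$. Assuming for contradiction that $\dim G$ lies in the $l$-th forbidden range, the estimate $\dim G\le\kappa+\dim G_p$ forces $\dim G_p$ to exceed the lower threshold for subgroups of $O(\kappa)$, and the only admissible value above that threshold is the extremal one $\binom{\kappa}{2}$ (respectively the appropriate $\binom{\kappa-l+1}{2}$ for the given level). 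This is exactly the mechanism already seen at the top level $l=1$ in the proof of Theorem \ref{teo:B}.

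To close the argument in the extremal case I would reuse the geometric scheme of Theorem \ref{teo:B}: once $\varphi(G_p)$ is forced to be $SO(\kappa)$ or $O(\kappa)$ it acts transitively on the sphere of unit directions, whence $\partial B_p(r,X)\subset G\cdot p$ for all small $r$ and $G\cdot p$ is open; connectedness of $X$ then gives transitivity, Proposition \ref{prop.homogeneous} forces $(X,d)$ to be a Riemannian manifold with $k=\kappa=\dim_T X$, and $\dim G=\dim_T X+\dim G_p=\binom{\kappa+1}{2}$ lands on the right endpoint of the interval, contradicting its being an interior point. For the lower gaps ($l>1$), where the orbit need not be top-dimensional, the cleanest route is to transport the problem to the smooth isotropy action of $\varphi(G_p)$ on the round sphere $S^{k-1}$ and invoke Mann's theorem for manifolds there, since after the single $RCD$ blow-up no further metric-measure machinery is needed.

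The step I expect to be the main obstacle is the combinatorial bookkeeping that matches the $\kappa$-indexed intervals for $\dim G$ with the gaps available for $\dim G_p\le\binom{k}{2}$, precisely because $k<\kappa$ is a priori possible (this is the subtlety recorded in the remark after Proposition \ref{prop.preservstrat}, where $\dim_T\mathcal{R}_k$ may exceed $\underline{\dim}X$). One must check at each level that subtracting the orbit dimension lands $\dim G_p$ strictly inside an orthogonal-group gap, and that the endpoints are handled by the homogeneity argument rather than being spuriously excluded. A secondary point requiring care is verifying that $4,6,10$ are exactly the exceptional dimensions of Mann's orthogonal-group theorem — the values admitting intermediate subgroups such as $SU(3)\subset SO(6)$ whose dimensions fall inside an otherwise-forbidden range — so that the hypothesis $\kappa\ne 4,6,10$ is the correct one to inherit.
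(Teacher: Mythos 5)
Your route diverges sharply from the paper's, and as written it has two genuine gaps. The paper's entire proof is a one-step reduction: by Remark \ref{rem.prinorbit} $\mathfrak{m}$-a.e.\ point lies in a principal orbit, principal orbits are topological manifolds (being homeomorphic to $G/G_{\min}$), and with these two facts the paper runs Theorems 1 and 2 of \cite{Mann} verbatim; no blow-up at a regular point, no tangent-cone convergence, and no Lipschitz/co-Lipschitz hypothesis enter. Your proposal instead funnels everything through Theorem \ref{teo.Lieinyectivo} to obtain the injective homomorphism $\varphi\colon G_p\hookrightarrow O(k)$. That theorem requires the local Lipschitz/co-Lipschitz behaviour of the action (the isotropy condition $\mathcal{I}$), and, unlike Theorems \ref{teo:B} and \ref{teo:C}, the present statement does \emph{not} assume $\mathcal{I}$ --- it only assumes $G$ compact and $\kappa\neq 4,6,10$. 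You cannot invoke Theorem \ref{teo.Lieinyectivo} here without importing a hypothesis the theorem does not carry; this is precisely the dependence the paper's principal-orbit reduction is designed to avoid.

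The second gap is the one you flag but defer, and it is not merely bookkeeping. For $l\geq 2$ the endgame cannot be ``force transitivity'': the upper endpoint $\binom{\kappa-l+2}{2}=\dim O(\kappa-l+2)$ is realized by non-transitive actions, so the contradiction must come purely from the arithmetic of $\dim G=\dim G\cdot p+\dim G_p$ with $\dim_T G\cdot p\leq\kappa$ and $\dim G_p$ constrained by gaps of an orthogonal group. But your embedding places $G_p$ in $O(k)$ while the forbidden intervals for $\dim G$ are indexed by $\kappa$, and when $k<\kappa$ nothing in your sketch shows that subtracting the orbit dimension lands $\dim G_p$ strictly inside a gap of $O(k)$ (the check you do carry out works only for $l=1$ with $k=\kappa$). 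Until that matching is established for every $l$, or the argument is rerouted through principal orbits as in the paper, the proof is incomplete. On the positive side, your identification of $4,6,10$ as the exceptional dimensions of the orthogonal gap lemma is correct and explains the hypothesis, and your $l=1$ analysis correctly reproduces Theorem \ref{teo:B}; the difficulty is entirely in the levels $l\geq 2$ and in the unlicensed use of $\mathcal{I}$.
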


\begin{proof}
From Remark \ref{rem.prinorbit} we have that $\mathfrak{m}-$a.e. point $x \in X$ is in a principal orbit. It is also clear that orbits are topological manifolds. Therefore we can follow Mann's proof (see Theorem 1 and 2 in \cite{Mann})  verbatim.  
\end{proof}

\subsection{Four dimensional $RCD^*(K,N)$ spaces.}

Now we look for the $4-$dimensional  analogues of theorems $B$ and $C.$ The first inconvenient is that  the Lemma  \cite[p. 48]{Koba}  is no longer valid. However, we have the following result:

\begin{teo}
Let $(X,d,\mathfrak{m})$ be an $RCD^*(K,N)$ space such that $\underline{\dim}X=4,$ and $\dim_T\mathcal{R}_4\leq 4$. If $G\leq \Iso(X)$ is a closed subgroup, satisfying the isotropy condition $\mathcal{I}$ and  has dimension $7$ or $8$ then $(X,d, \mathfrak{m})$ is homogeneous.
\end{teo}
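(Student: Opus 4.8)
The plan is to reduce, exactly as in the proof of Theorem \ref{teo:B}, to showing that the orbit $G\cdot p$ is open; since $X$ is connected this forces $X=G\cdot p$, i.e.\ homogeneity (whence $(X,d)$ is Riemannian by Proposition \ref{prop.homogeneous}, and Ishihara's classification \cite{Ishi} then applies). Here $p\in\mathcal{R}_4$ is the point furnished by the isotropy condition $\mathcal{I}$, and since $\dim_T\mathcal{R}_4\le 4$ we have $\kappa=\max\{\dim_T\mathcal{R}_4,\underline{\dim}X\}=4$. By Theorem \ref{teo.Lieinyectivo} the isotropy group $G_p$ embeds as a closed subgroup of $O(4)$ (the limiting group fixes the origin), while Proposition \ref{prop.preservstrat} gives $\dim_T G\cdot p\le\dim_T\mathcal{R}_4\le 4$, so $\dim G_p=\dim G-\dim G\cdot p\ge\dim G-4\ge 3$.

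The first step is a purely Lie-theoretic classification that replaces the lemma of \cite[p.~48]{Koba}, which fails in dimension four: a connected subgroup of $O(4)$ of dimension at least $3$ is conjugate to one of $SO(4)$, $U(2)$, a quaternionic $Sp(1)=SU(2)$, or the reducible $SO(3)$ acting on $\mathbb{R}^{3}\oplus\mathbb{R}$. In particular there is no $5$-dimensional connected subgroup, and every case except the reducible $SO(3)$ acts transitively on the unit tangent sphere $\mathbb{S}^{3}\subset\mathbb{R}^{4}$. I would then split according to whether $G_p^{0}$ acts transitively on $\mathbb{S}^{3}$.

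In the transitive case (which covers $\dim G_p\in\{4,6\}$ as well as $G_p^{0}=SU(2)$) the $G_p$-orbits of points near $p$ are precisely the geodesic spheres, so $\dim G_p\cdot q=3$ for the principal orbits; Theorem $6.3$ of \cite{GalKellMonSosa}, applicable because $G_p$ satisfies condition $\mathcal{I}$, then yields $\underline{\dim}X/G_p=1$. By Theorem \ref{teo.lowdim} the quotient $X/G_p$ is an interval with $p$ on its boundary, $G_p$ acts transitively on each sphere $\partial B_p(r,X)$, and since $\dim G\cdot p=\dim G-\dim G_p\ge 1$ there is a nonconstant curve in $G\cdot p$ issuing from $p$. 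The argument of the last lemma of Theorem \ref{teo:B} then produces $R>0$ with $B_p(R,X)\subseteq G\cdot p$, so the orbit is open.

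The remaining, and genuinely harder, case is $G_p^{0}=SO(3)$ acting reducibly, which by the classification forces $\dim G_p=3$, hence $\dim G=7$ and $\dim G\cdot p=4=\underline{\dim}X$. Now $G_p$ is \emph{not} transitive on spheres, so the previous argument is unavailable, and the main obstacle is to show that a full-dimensional orbit is open. I would blow up at $p$: along $\lambda_n\to\infty$ one has $(\lambda_n X,d_n,p,G)\to(\mathbb{R}^{4},d_E,0,H)$ with $\varphi_n\colon G\hookrightarrow H$ injective by Theorem \ref{teo.Lieinyectivo}, and the rescaled orbits converge to $H\cdot 0$; being the orbit of a point under Euclidean isometries this is an affine subspace, and a dimension count identifies it with all of $\mathbb{R}^{4}$, so $H$ is transitive. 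The delicate point, where I expect the real work to lie, is upgrading this infinitesimal transitivity to genuine openness of $G\cdot p$ in $X$, since $X$ is not a priori a topological manifold; equivalently one shows $\mathfrak{m}(G\cdot p)>0$ using that $G\cdot p$ is a $4$-manifold inside the $4$-rectifiable set $\mathcal{R}_4$, and then invokes Proposition \ref{prop.orbposmeas}. Once openness is established in both cases, connectedness of $X$ gives $X=G\cdot p$ and the space is homogeneous.
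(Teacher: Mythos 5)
Your reduction and your treatment of the cases where $G_p$ acts transitively on $\mathbb{S}^3$ follow the paper's outline (bound $\dim G_p\ge 3$, embed $G_p$ into $O(4)$ via Theorem \ref{teo.Lieinyectivo}, rule out dimension $5$ by Ishihara's lemma, then run the Theorem \ref{teo:B} machinery to get $X/G_p$ one-dimensional and the orbit open). But the case you correctly identify as the hard one --- $G_p^0$ a reducible $SO(3)$, i.e.\ $\dim G_p=3$ and $\dim G=7$ --- is exactly where your proposal has a genuine gap, and it is also where the paper does something different. You propose to blow up at $p$, obtain a transitive limit group $H\le \Iso(\mathbb{R}^4)$, and then ``upgrade infinitesimal transitivity to openness of $G\cdot p$''; you acknowledge that you do not know how to carry out this last step, and neither of your suggested routes works as stated: $H\cdot 0$ need not be an affine subspace (orbits of Euclidean isometry groups can be spheres, cylinders, helices, and the dimension count $\dim H-\dim H_0$ is not pinned down to $4$ by what you have), and ``a $4$-manifold inside the $4$-rectifiable set $\mathcal{R}_4$ has positive $\mathfrak{m}$-measure'' is not something Theorem \ref{teo.rect} gives you, since rectifiability holds only up to a null set and no lower density bound for $\mathfrak{m}$ along such a submanifold is available in this paper.

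The paper instead shows that the case $\dim G_p=3$ \emph{cannot occur}: since then $\dim_T G\cdot p=4$, one takes a slice $S$ at $p$ (Palais \cite{Pal}), a point $q\in S$ whose $G_p$-orbit has positive topological dimension and lies in $S$ (using Proposition \ref{prop.dimorbit}), and a compact $4$-dimensional local section $V\subset G/G_p$; the map $\Psi(v,s)=v\cdot s$ is injective on $V\times S$ by the slice property, so $\Psi(V\times G_p\cdot q)$ is a subset of $\mathcal{R}_4$ of topological dimension at least $5$, contradicting the hypothesis $\dim_T\mathcal{R}_4\le 4$. Note that this is the second, essential use of that hypothesis, which your argument never exploits in the hard case. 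A smaller soft spot in your transitive case: you assert that the $G_p$-orbits near $p$ ``are precisely the geodesic spheres'' because the limit group acts transitively on $\mathbb{S}^3$; passing from the tangent-cone picture to actual orbits is not automatic here. The paper obtains transitivity on metric spheres the other way around, by first proving $\underline{\dim}(X/G_p)\le 1$ from the isotropy dimension count of Proposition \ref{prop.boundsisotropy} together with Theorem $6.3$ of \cite{GalKellMonSosa}, and then invoking Theorem \ref{teo.lowdim} to identify $X/G_p$ with an interval having $G_p\cdot p$ on its boundary.
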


\begin{proof}
Let $p \in \mathcal{R}_4$ be a point where $G$ satisfies the isotropy condition. Then we have 
$$\dim G = \dim_T G\cdot p+ \dim G_p \leq 4 +\dim G_p.  $$

Since $p\in \mathcal{R}_4,$ by  Theorem \ref{teo.Lieinyectivo}  there exists an injective Lie group homomorphism $\varphi: G_p \rightarrow SO(4).$ Observe that $SO(4)$ doesn't admit subgroups of dimension $5.$ (see  Lemma in \cite{Ishi}, p. 347). Therefore  $\dim G_p$   is either $3, 4$ or $6.$
 
Suppose $\dim G_p \geq 4.$ Since  $\varphi (G_p)$ acts on $\mathbb{S}^{3}$ by isometries and $\mathbb{S}^{3}$  is a three dimensional Alexandrov space  then  by Theorem $B$   $\dim \varphi(G_p) =6= \dim \Iso(\mathbb{S}^3).$  

From here we proceed as in the proof of theorem $B$ and conclude that $X$ is homogeneous.  

Now, if $\dim G_p =3$ then the orbit $G\cdot p$ has dimension $4.$  Since the group $G$ is closed then it acts properly on $X.$  By Theorem $2.1.4$ of \cite{Pal} we can consider a slice $S$ at $p$ that has the following properties:

\begin{itemize}
\item $G\cdot S$ is open in $X.$

\item  $S$ is closed in $X$ and $G_p-$invariant.

\item If $gS \cap S \neq \emptyset$ then  $g \in G_p.$ 
\end{itemize}  

The dimension of the isotropy group $G_p$ is positive so by  Proposition \ref{prop.dimorbit} $\mathfrak{m}-$a.e. point has an orbit of positive dimension.   So we can assume that for a point $q \in S,$  $\dim_TG_p \cdot q >0  $  and $G_p \cdot q \subset S.$

Consider $V \subset G/G_p$  a compact set homeomorphic to a closed ball  in $\mathbb{R}^4.$ Then we can find a local section $s_V: V \rightarrow G$ such that   the function $v \mapsto  v\cdot p $ is an homeomorphism. 

By intersecting $S$ with a closed ball centered at $p$ we may assume that $S$ is compact. We define the function $\Psi: V\times S \rightarrow X,$  $(v,s) \mapsto  v\cdot s.$ We see that it is an embedding.  It suffices to show that $\Psi$ is injective.   Let  $v\cdot s_1 = w\cdot s_2,$ for some $v,w \in V, s_1,s_2 \in S.$  Then $s_1 = v^{-1}w\cdot s_2$ and since $S$ is a slice this implies that $v^{-1}w \in G_p.$ But then 
$v\cdot p = w\cdot p,$  and we have that $v=w.$ It follows that $s_1 = s_2.$  

Now if we restrict  $\Psi $ to  $V \times  G_p \cdot q$ then we have that $\Psi (V \times  G_p \cdot q) \subset \mathcal{R}_4$ and  that $\dim_T \Psi (V \times  G_p \cdot q) \geq 5. $  So we have a contradiction. 
\end{proof}

\begin{rem}
Notice that from the discussion made in the Preliminaries we need to add the hypothesis on the bound of the topological dimension of $\mathcal{R}_4.$ Otherwise it is not clear that the action is transitive. 
\end{rem}

Now, using the paper of Ishihara \cite{Ishi},  we get the  $4-$dimensional analogue of Theorem $B.$ In the case that the dimension of the group $G$ is $8$ we obtain that $X$ is isometric to a K\"ahler manifold with constant holomorphic sectional curvature. (see Section $4$ \cite{Ishi}).   


\section{Symmetric spaces}

In this final section we look at symmetric spaces (compare with \cite{Ber1} and \cite{GalGui}). We will say that an $RCD^*(K,N)$ space $(X,d,\mathfrak{m})$ is locally (uniformly) symmetric if for every $p\in X$ there exists a neighbourhood $p\in U(p)$ and $r>0$ such that for all $q\in U(p)$ the ball $B_q (r,X)$ admits an isometric involution that only fixes $q.$ The space will be said to be symmetric if the involutions extend to all of $X.$  

In this final section we will see that, much like in the case of Alexandrov spaces (see Theorem $8.4$ in \cite{GalGui}), a space being symmetric is quite restrictive.  Notice that we don't need to assume anything else on the way involutions act on the reference measure $\mathfrak{m}.$

\begin{teo}
 A symmetric $RCD^*(K,N)$ space  is isometric to a Riemannian manifold. 
\end{teo}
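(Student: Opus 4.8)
The plan is to show that a symmetric $RCD^*(K,N)$ space is homogeneous, and then invoke Proposition \ref{prop.homogeneous}, which already establishes that homogeneous $RCD^*(K,N)$ spaces are Riemannian manifolds. The whole proof thus reduces to producing a transitive isometric action. First I would fix the geodesic symmetry $s_p$ at each point $p\in X$: by hypothesis this is an isometric involution defined on all of $X$ whose unique fixed point near $p$ is $p$ itself. The key elementary observation is that these point-symmetries can be composed to translate along geodesics. Specifically, given a geodesic $\gamma$ and two points $\gamma_a,\gamma_b$ on it, the composition $s_{\gamma_{(a+b)/2}}\circ s_{\gamma_a}$ should act as a ``transvection'' sliding $\gamma_a$ to $\gamma_b$ along $\gamma$, exactly as in the classical theory of Riemannian symmetric spaces.

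The main steps I would carry out are: (1) verify that each $s_p$ reverses geodesics through $p$, i.e. $s_p(\gamma_t)=\gamma_{-t}$ for a unit-speed geodesic with $\gamma_0=p$; this follows because $s_p$ is an isometric involution fixing $p$, so it sends geodesics through $p$ to geodesics through $p$ of the same speed, and the involution property forces the reversal. (2) Show the subgroup $G\leq \Iso(X)$ generated by all the $s_p$ acts transitively. Given any two points $x,y\in X$, connect them by a geodesic $\gamma$ (using that $X$ is geodesic), and use the midpoint symmetry $s_m$ with $m$ the midpoint of $\gamma$ to send $x$ to $y$; since $s_m$ reverses the geodesic through $m$ containing $x$ and $y$ and these points are equidistant from $m$, we get $s_m(x)=y$. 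This already gives transitivity directly. (3) Note that $G$ is a closed subgroup of $\Iso(X)$, which is a Lie group by the Lie group structure theorem quoted in the Preliminaries, so $G$ is itself a Lie group acting transitively, making $X$ homogeneous.

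The delicate point — and the one I expect to be the main obstacle — is justifying step (2) rigorously in the synthetic setting, namely that $s_m(x)=y$ when $x,y$ are the endpoints of a geodesic with midpoint $m$. In a smooth symmetric space this is immediate from $d(s_m)_m=-\mathrm{id}$, but here we have no differential of $s_m$; we only know $s_m$ is an isometric involution whose sole nearby fixed point is $m$. The argument must instead be purely metric: $s_m$ fixes $m$ and is an isometry, so it permutes the sphere $\partial B_m(r,X)$ for each $r$; on the geodesic through $x,m,y$ the two points $x,y$ both lie at distance $d(x,m)=d(y,m)$ from $m$ and are the two ``antipodal'' intersection points of this geodesic with the sphere, and one must argue that the involution swaps them rather than fixing each. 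Fixing each would force $s_m$ to have a fixed point other than $m$ in every neighbourhood, contradicting the defining property of the involution; alternatively, the geodesic-reversing property from step (1) pins down $s_m(x)=y$ exactly. I would also need to confirm the symmetry is globally defined (the statement assumes the symmetric, not merely locally symmetric, case) so that $s_m$ is available for arbitrarily far-apart $x,y$. Once transitivity is secured, the conclusion is immediate from Proposition \ref{prop.homogeneous}.

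\begin{proof}
By hypothesis, for every $p\in X$ there is a globally defined isometric involution $s_p\colon X\to X$ whose only fixed point in a neighbourhood of $p$ is $p$. Since $s_p$ is an isometry fixing $p$, it maps unit-speed geodesics emanating from $p$ to unit-speed geodesics emanating from $p$; as $s_p$ is an involution with $p$ as its unique nearby fixed point, it must reverse each such geodesic, that is $s_p(\gamma_t)=\gamma_{-t}$ whenever $\gamma_0=p$.

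Let $G\leq \Iso(X)$ be the closed subgroup generated by the involutions $\{s_p\}_{p\in X}$. Given $x,y\in X$, let $\gamma\colon[0,1]\to X$ be a geodesic with $\gamma_0=x$, $\gamma_1=y$, and let $m=\gamma_{1/2}$ be its midpoint. The points $x$ and $y$ lie on the geodesic through $m$ at equal distance $d(x,m)=d(y,m)$ from $m$, on opposite sides. By the reversing property applied at $m$, we have $s_m(x)=y$. Hence $G$ acts transitively on $X$, and $(X,d,\mathfrak{m})$ is homogeneous. The conclusion now follows from Proposition \ref{prop.homogeneous}.
\end{proof}
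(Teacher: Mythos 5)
Your endgame (reduce to homogeneity and invoke Proposition \ref{prop.homogeneous}) coincides with the paper's, but the transvection argument you use to get transitivity breaks down at exactly the step you flag, and the justifications you offer do not close it. From the hypotheses, $s_m$ is an isometric involution of $X$ whose only fixed point in $B_m(r,X)$ is $m$; this tells you that $s_m$ permutes the geodesic segments issuing from $m$, but nothing forces it to carry the segment from $m$ to $x$ onto the continuation of the same geodesic towards $y$. Your dichotomy ``$s_m$ either swaps $x,y$ or fixes both'' is therefore false: $s_m(x)$ can a priori be any third point of $\partial B_m(d(x,m),X)$, and no new fixed point is created. In the Riemannian setting the reversal is forced because $d(s_m)_m$ is a linear involution of $T_mM$ with no nonzero fixed vector, hence $-\mathrm{id}$; here no linearization is available (obtaining one is essentially what the theorem asserts), and even the blow-up of $s_m$ at a regular point need not have $0$ as its only fixed point, since fixed-point sets can grow under limits. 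Two further problems: the unique-fixed-point property is only guaranteed on $B_m(r,X)$ for the radius $r$ coming from the definition, which may be much smaller than $d(x,m)$; and geodesics in an $RCD^*$ space need not be extendible, so the $\gamma_{-t}$ in your step (1) may not exist.

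The paper's proof of transitivity is measure-theoretic and avoids the reversal question altogether. Fixing $x$ and working with the isometry group $G$, Remark \ref{rem.prinorbit} (the principal orbit theorem) gives, for $\mathfrak{m}$-a.e.\ $y$ near $x$, a unique foot point $\bar x\in G\cdot x$ with $d(\bar x,y)=d^*(G\cdot x,G\cdot y)$ and $G_y\leq G_{\bar x}$. The symmetry $\sigma_y\in G_y$ then fixes both $y$ and $\bar x$, and the unique-fixed-point property forces $y=\bar x\in G\cdot x$. Hence the orbit of $x$ has positive measure, Proposition \ref{prop.orbposmeas} upgrades this to $G\cdot x=X$, and Proposition \ref{prop.homogeneous} concludes. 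Note that your argument uses no measure-theoretic input at all, whereas the paper's hinges on principal orbits and on fixed-point sets of isometries being $\mathfrak{m}$-null. If you want to salvage the transvection route, the geodesic-reversal property of the symmetries is precisely what you would have to prove first, and that is where the difficulty of the theorem is concentrated.
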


\begin{proof}
Take a point  $x \in X$ and consider its orbit $G \cdot x.$ Consider $U(x)$ the neighbourhood of $x$ mentioned in the definition of symmetric space. We may assume this neighbourhood is a ball. 
Since $\mathfrak{m}(U(x))>0$ by  Remark \ref{rem.prinorbit}  of the Principal Orbit Theorem we have that   for $\mathfrak{m}-$a.e. $y \in X$  there exists  $\bar{x} \in G \cdot x \cap U(x)$ such that:
\begin{itemize}
\item  $y$ and $\bar{x}$ are joined by a unique geodesic, and $d(x,y) = d^*(G\cdot x, G \cdot y).$

\item  $G_y \leq G_{\bar{x}}$

\end{itemize}
 
We may further assume that there exists an involution  $\sigma_y \in G_y.$  It is clear that  $\sigma_y \in G_{\bar{x}},$  but $\sigma_y$ has only one fixed point in $U(x)$. Hence $y=\bar{x},$ and then $d^*(G\cdot x, G \cdot y)=0.$ 

With this we have that $U(x)\subset G\cdot x$  and $\mathfrak{m}(G\cdot x) >0.$  Proposition \ref{prop.orbposmeas} shows that $X=G\cdot x.$ Finally Proposition \ref{prop.homogeneous}
gives us that $(X,d,\mathfrak{m})$ must be a Riemannian manifold.
\end{proof}

In \cite{Ber1} Berestovski\u{\i} studied locally symmetric spaces as well and he showed that in simply connected $G-$spaces the two notions coincide. However, examples $8.1, 8.2$ and $8.3$ constructed in \cite{GalGui} exhibit that this is no longer true for Alexandrov spaces.

\end{document}